\newcommand{\bbC}{{\mathbb C}}
\newcommand{\bbR}{{\mathbb R}}
\newcommand{\bbT}{{\mathbb T}}
\newcommand{\bbX}{{\mathbb X}} 
\newcommand{\bbZ}{{\mathbb Z}} 
\newcommand{\p}{\partial}
\DeclareMathOperator{\Log}{Log}
\DeclareMathOperator{\sgn}{sgn}
\def\re{\operatorname{Re}}
\def\im{\operatorname{Im}}
\def\ts{\textstyle}
\def\bC{{\mathbf C}}
\def\cC{{\mathcal C}}
\def\cD{{\mathcal D}}
\def\cF{{\mathcal F}}
\def\cH{{\mathcal H}}
\def\cM{{\mathcal M}}
\def\cP{{\mathcal P}}
\def\cS{{\mathcal S}}
\def\cU{{\mathcal U}}
\def\cW{{\mathcal W}}
\def\sH{{\mathscr H}}
\def\sP{{\mathscr P}}
\def\sT{{\mathscr T}}
\def\Re{\operatorname{Re}}
\def\Im{\operatorname{Im}}
\def\eps{\varepsilon}
\def\vp{\varphi}
\def\p{\partial}
\def\ms{\medskip}
\def\pt{\textstyle{\frac{n}{2}}}
\def\pt{{\textstyle \frac{\pi}{2} }}
\def\Hol{\operatorname{Hol}}
\def\CB{\color{black} }
\newtheorem{thm}{Theorem}[section]
\newtheorem{prop}[thm]{Proposition}
\newtheorem{cor}[thm]{Corollary}
\newtheorem{lem}[thm]{Lemma}
\newtheorem{defn}[thm]{Definition}
\newtheorem{remark}[thm]{Remark}
\newtheorem{theorem}{Theorem}
\DeclareFontFamily{U}{mathx}{\hyphenchar\font45}
\DeclareFontShape{U}{mathx}{m}{n}{
      <5> <6> <7> <8> <9> <10>
      <10.95> <12> <14.4> <17.28> <20.74> <24.88>
      mathx10
      }{}
\DeclareSymbolFont{mathx}{U}{mathx}{m}{n}
\DeclareMathAccent{\widecheck}{0}{mathx}{"71}
\DeclareMathAccent{\wideparen}{0}{mathx}{"75}
\begin{document}

\title[Szeg\H o projection on the distinguished boundary]{Sharp estimates for the Szeg\H o 
  projection on the distinguished boundary of model worm domains}

\author[A. Monguzzi]{Alessandro Monguzzi}
\author[M. M. Peloso]{Marco M. Peloso}
\address{Dipartimento di Matematica ``F. Enriques''\\
Universit\`a degli Studi di Milano\\
Via C. Saldini 50\\
I-20133 Milano}

\email{alessandro.monguzzi@unimi.it}
\email{marco.peloso@unimi.it}
\thanks{Both authors supported in part by the 2010-11 PRIN grant
  \emph{Real and Complex Manifolds: Geometry, Topology and Harmonic Analysis}  
  of the Italian Ministry of Education (MIUR)}
\keywords{Hardy spaces, Szeg\H o kernel, Szeg\H o projection,
worm domain.}
\subjclass[2010]{32A25, 32A36, 30H20}


\begin{abstract}
In this paper we study the regularity of the  Szeg\H o projection on
Lebesgue and Sobolev spaces on the distinguished 
boundary of the unbounded model
worm domain $D_\beta$.  

We denote by $d_b(D_\beta)$
the distinguished boundary of $D_\beta$ and define 
the corresponding Hardy space  $\sH^2(D_\beta)$. This can be
identified with a closed subspace of $L^2(d_b(D_\beta),d\sigma)$, that 
we denote by $\sH^2(d_b(D_\beta))$,
where $d\sigma$ is the naturally induced measure on $d_b(D_\beta)$. 

The orthogonal Hilbert space projection $\sP: L^2(d_b(D_\beta),d\sigma)\to 
\sH^2(d_b(D_\beta))$ is called the Szeg\H o projection on the distinguished 
boundary.

We prove that $\sP$, initially defined on the dense subspace $L^2(d_b(
D_\beta),d\sigma)\cap L^p(d_b(D_\beta), d\sigma)$ extends to a bounded
operator 
$\sP: L^p(d_b(D_\beta), d\sigma)\to L^p(d_b(D_\beta),
d\sigma)$ if and only if
$\textstyle{\frac{2}{1+\nu_\beta}}<p<\textstyle{\frac{2}{1-\nu_\beta}}$
where
$\nu_\beta=\textstyle{\frac{\pi}{2\beta-\pi}},\beta>\pi$. Furthermore,
we also prove that $\sP$ defines a bounded operator $\sP:
W^{s,2}(d_b(D_\beta),d\sigma)\to W^{s,2}(d_b(D_\beta), d\sigma)$ if and
only if $0\leq s<\textstyle{\frac{\nu_\beta}{2}}$ where $W^{s.2}(d_b(
D_\beta), d\sigma)$ denotes the Sobolev space of order $s$ and
underlying $L^2$-norm. 

Finally, we prove a necessary condition for the boundedness of $\sP$
on $W^{s,p}(d_b(D_\beta), d\sigma)$, $p\in(1,\infty)$, the Sobolev space
of order $s$ and underlying $L^p$-norm.   
\end{abstract}  

\maketitle

\ms

\section{Introduction and statement of the main results}
\label{sec1}
In this paper we consider the Hardy spaces on the distinguished
boundary of the model worm domain 
\begin{equation*}
D_\beta =
\Big\{ (z_1,z_2)\in\bbC^2:\, \Re\big(z_1e^{-i\log|z_2|^2}\big)>0,\ 
\big| \log|z_2|^2\big|<\beta-\pt \Big\}, \quad \beta>\pi,
\end{equation*}
and study the continuity of the associated Szeg\H{o} projection
operator on the Lebesgue and Sobolev spaces. 

The domain $D_\beta$ is unbounded, pseudoconvex, with Lipschitz
boundary, and was instrumental in proving that the Bergman
projection does not preserve Sobolev spaces of sufficiently high order
on the smooth pseudoconvex worm domain $\Omega_\beta$ introduced by
K. Diederich and J.E. Forn\ae ss \cite{MR0430315}. This result, due to
D. Barrett \cite{Ba-Acta}, was a major breakthrough since it suggested
that some long-standing conjectures about the geometry of smooth
pseudoconvex domains and the regularity of the associated Bergman
projection were actually false. Indeed, few years later, M. Christ
\cite{MR1370592} proved that
the Bergman projection on the Diederich--Forn\ae ss worm
domain does not preserve the space $C^\infty(\overline{\Omega_\beta})$
-- that is, Condition $R$ fails on $\Omega_\beta$.
We refer the reader to \cite{MR2393268} for a detailed
account on the subject. 

The question of the regularity of the Bergman projection on worm
domains and related questions
has been considered by various authors, and
here we mention in particular
\cite{KP-Houston, MR2904008,  MR3424478, 2015arXiv150906383K, KPS, CS}.   
\medskip

On worms domains, of course
it is  of great interest also to  study the (ir-)regularity of  the
boundary analogue of the Bergman 
projection, that is, the Szeg\H{o} projection.   If $\Omega=\{ z:\rho(z)<0\}$ is a
smoothly bounded domain in $\bbC^n$, the Hardy space $H^2(\Omega)$ is
defined as 
$$
H^2(\Omega) =\big\{ f\in\Hol(\Omega): \sup_{\eps>0}
\int_{b\Omega_\eps} |f|^2 d\sigma_\eps <\infty
\big\} \,,
$$
where $\Omega_\eps=\{ z:\rho(z)<-\eps\}$ and $d\sigma_\eps$ is the
induced surface measure on $b\Omega_\eps$.  Here, and in what follows,
we denote by $b\cD$ the topological boundary of a given domain $\cD$.
Then, $H^2(\Omega)$ can be 
identified with a closed subspace of $L^2(b\Omega,d\sigma)$, that we
denote by $H^2(b\Omega)$.  The
Szeg\H o projection is the orthogonal projection 
$$
P_\Omega : L^2(b\Omega,d\sigma) \to H^2(b\Omega) \,;
$$
see \cite{St2}. 
Mapping properties
of the Szeg\H o projection on other function spaces
have been studied for various classes of smooth bounded
domains. In the case of strictly pseudoconvex domains
\cite{MR0450623},  domains of finite type in $\bbC^2$
\cite{MR979602}, convex domains of finite type in
$\bbC^n$ \cite{MR1452048} the Szeg\H o projection $
P_\Omega$ turns out to be bounded on the Lebesgue--Sobolev spaces
$W^{s,p}(b\Omega)$ for $1<p<\infty$ and
$s\ge0$.  When $\Omega$ is Reinhardt domain \cite{MR773403,MR835396},
a domain with partially transverse symmetries \cite{MR971689}, a
pseudoconvex domain satisfying Catlin's property $(\cP)$ \cite{
  MR871667}, a 
complete Hartogs domain in $\bbC^2$ \cite{ MR999739}, or a domain with
a plurisubharmonic defining function on the boundary \cite{MR1133741},
then  
the Szeg\H{o} projection is exactly regular, that is, $P_\Omega$ is a
bounded operator $P_{\Omega}: W^{s,2}(b\Omega)\to W^{s,2}(b\Omega)$
for every $s\geq0$.  We also mention that, if $\Omega$ is bounded,
$C^2$ and strongly pseudoconvex in $\bbC^n$, the Szeg\H{o} projection
$P_\Omega$ again extends to bounded operator on $L^p(b\Omega)$ for
$1<p<\infty$, 
\cite{2015arXiv150603748L,2015arXiv150603965L}. 

There are examples of domains $\Omega$  on which the Szeg\H{o} projection
$P_\Omega$ is less regular.  L. Lanzani and E.M. Stein described the
(ir-)regularity of $P_\Omega$ on Lebesgue spaces in the case of planar
simply connected domains, \cite[Thm. 2.1]{MR2030575}.  In particular
they showed that if $\Omega$ has Lipschitz boundary, then
$P_\Omega:L^p(b\Omega)\to L^p(b\Omega)$ if and only if
$p'_\Omega<p<p_\Omega$, where $p_\Omega$ depends only on the Lipschitz
constant of $b\Omega$. 
More recently, S. Munasinghe and Y.E. Zeytuncu provided an example of
a piecewise smooth, bounded pseudoconvex domain in $\bbC^2$ on which
the Szeg\H{o} projection
$P_\Omega$ is unbounded on
$L^p(b\Omega)$ for every $p\neq 2$ \cite{MR3355787}. The same result on tube
domains over irriducible self-dual cones of rank greater than 1 has
been known for a number of years, \cite{BeBo}.\ms

Clearly the  Szeg\H{o} projection depends
on the choice of the measure on the boundary.   For instance, instead
of taking the induced surface measure $d\sigma$, one can consider the
Fefferman surface measure, see
 \cite{MR3145917}, or any surface measure of the form $\omega
 d\sigma$, with $\omega$ continuous and positive,
 \cite{2015arXiv150603965L}.  On non-smooth domains, such as the
 polydisk, it is also of interest, and perhaps more natural, to study
 Hardy spaces defined by integration over the so-called {\em
   distinguished boundary}. For a general domain
 $\Omega\subseteq\bC^n$, the
 distinguished boundary $d_b(\Omega)$ is the set
\begin{equation}\label{dist-bndry-def1}
d_b(\Omega)=\Big\{\zeta\in b\Omega:  \sup_{z\in b\Omega} |f(z)| \le
\sup_{\zeta\in d_b(\Omega)} |f(\zeta)| \ \text{for all\ } f\in H^\infty(\Omega)\Big\} 
\end{equation}
  where $H^\infty(\Omega)$ denotes the spaces of bounded holomorphic
  functions on $\Omega$. 

\medskip

In this paper we obtain sharp results concerning the regularity  
of the Szeg\H{o} projection $\sP=\sP_{D_\beta}$ for the Hardy spaces  on the
distinguished boundary of the model worm domain
$D_\beta$, that is
\begin{equation}\label{dist-bndry-def}
d_b(D_\beta)=\Big\{(z_1,z_2)\in\bbC^2: \big|\arg\,
  z_1-\log|z_2|^2\big|=\pt,\,  \big|\log|z_2|^2\big|=\beta-\pt\Big\}  \,.
\end{equation}
Our final goal is to study the (ir-)regularity of the
Szeg\H{o} projection of the smooth worm $\Omega_\beta$ and the present
work is a step in this direction. As is the Bergman setting, it is
reasonable to expect that 
the peculiar geometry of the worm domain affects the regularity of the
Szeg\H o projection as well. The results in this paper support this
expectancy.  
\medskip

In shifting from the Bergman to the Szeg\H o setting new difficulties
arise and Barrett's arguments cannot be trivially adapted to transfer
information from the model worm $D_\beta$ to the smooth worm
$\Omega_\beta$ \cite{MR3491881}. Barrett proved the irregularity of
the Bergman projection $B_{D_\beta}$ on $W^{s,2}(\Omega_\beta)$
if $s\ge \pi/(2\beta-\pi)$
by means of the well-known
transformation rule for the Bergman projection and by
studying the Bergman projection of a biholomorphic copy
of $D_\beta$, classically denoted by $D'_\beta$. 
Subsequently, if $\Omega_{\beta,\lambda}$, $\lambda>0$,
denotes an appropriate dilation of $\Omega_\beta$, Barrett proved that
$B_{\Omega_{\beta,\lambda}}\to B_{D_\beta}$ as $\lambda\to\infty$ in a
suitable sense.  Thus, if
$B_{\Omega_\beta}$ were bounded on $W^{s,2}(\Omega_\beta)$, 
then also  $B_{D_\beta}$ would be bounded on $W^{s,2}(D_\beta)$. Hence, by
contradiction, it follows that $B_{\Omega_\beta}$ cannot be
bounded on $W^{s,2}(\Omega_\beta)$ if $s\ge \pi/(2\beta-\pi)$.
\medskip

In the Szeg\H{o} setting, in general, there is no transformation rule
for the Szeg\H{o} projection under biholomorphic mappings. 
Nonetheless, we are able to prove a transformation rule for the
projections $\sP$ and $\sP'=\sP_{D'_\beta}$, 
 see Section
\ref{xxx}. 
The mapping properties of the Szeg\H{o} projection $\sP'$ on the
distinguished boundary of $D'_\beta$ were studied by the first author
in \cite{M, MR3506304}. 
\ms

We now describe our results in greater details.
For $(t,s)\in (0,\pt)\times[0,\beta-\pt)$ consider the 
domain
$$
D_{t,s}=\left\{(z_1,z_2)\in\bbC^2: \big|\arg\, z_1-\log|z_2|^2\big|<t,
  \big|\log|z_2|^2\big|<s\right\}. 
$$
Then, the collection $\{D_{t,s}\}_{t,s}$ is a family of
approximating domains for $D_{\beta}$ and the distinguished boundary
of each of these domains is  
\begin{equation*}
d_b(D_{t,s})=\left\{(z_1,z_2)\in\bbC^2: \big|\arg\,
  z_1-\log|z_2|^2\big|=t, \big|\log|z_2|^2\big|=s\right\}. 
\end{equation*}
Consequently,
for $1\le p<\infty$, we define the Hardy space $\sH^p(D_\beta)$
as
\begin{equation*}
\sH^p(D_\beta):=\bigg\{ f\in\Hol(D_\beta):
\, \|f\|^p_{\sH^p(D_\beta)}=\sup_{(t,s)\in(0,\pt)\times[0,\beta-\pt)}
\|f\|^p_{L^p(d_b(D_{t,s}))}<\infty\bigg\} \,,
\end{equation*}
where, denoting by  $d\sigma_{t,s} $ the induced measure on $d_b(
D_{t,s})$,
\begin{align}\label{GrowthD}
 & \|f\|^p_{L^p(d_b(D_{t,s}))}
= \int_{d_b(D_{t,s})} |f|^p\,
 d\sigma_{t,s} \notag \\
& =  \int_0^{\infty}\int_0^{2\pi}|f\big(re^{i(s+t)}, e^{\frac{s}{2}}
e^{i\theta}\big)|^p\ e^{\frac{s}{2}} d\theta dr
+\int_0^{\infty}\int_0^{2\pi} |f\big(r e^{i(s-t)},
 e^{\frac{s}{2}}e^{ i\theta}\big)|^p \ e^{\frac{s}{2}} d\theta dr
  \\ 
 & \quad +\int_0^{\infty}\int_0^{2\pi} |f\big(re^{-i(s+t)},
 e^{-\frac{s}{2}}
e^{i\theta}\big)|^p\ e^{-\frac{s}{2}} d\theta dr
+\int_0^{\infty}\int_0^{2\pi} |f\big(r e^{-i(s-t)},
  e^{-\frac{s}{2}}e^{ i\theta}\big)|^p \ e^{-\frac{s}{2}}d\theta dr \,.\notag
\end{align}
  In Section \ref{Har-space-def-section} we will discuss the above
definition and show that it is a very natural one.  

Standard basic facts of Hardy space theory give that
any $f\in \sH^p$ admits a boundary value function,
that we still denote by $f$, defined on 
$d_b(D_\beta)$ that is $p$-integrable w.r.t. $d\sigma$, the
induced surface measure  
on $d_b(D_\beta)$.  Moreover, we
have the equality
$$
\|f\|_{\sH^p(D_\beta)}^p 
= \int_{d_b(D_\beta)} |f|^p\, d\sigma\,.
$$
Then, we can identify $\sH^p$ with a subspace of
$L^p(d_b(D_\beta),d\sigma)= L^p(d_b(D_\beta))$, that is closed and that we denote by
$\sH^p(d_b(D_\beta))$.  
The Szeg\H{o} projection $\sP$ is the the orthogonal projection
of $L^2(d_b(D_\beta))$ onto $\sH^2(d_b(D_\beta))$ 
$$
\sP: L^2(d_b(D_\beta)) \to \sH^2(d_b(D_\beta)) \,.
$$
The operator $\sP$ admits an integral representation and we call its
integral kernel the Szeg\H{o} kernel. It turns out that in this case
the operator $\sP$ can be written as a combination of Mellin--Fourier
multiplier operators (see Section \ref{M-F}). Our main results are the following.

\begin{theorem}\label{t:LpBounds}
Let be $\nu_\beta=\frac{\pi}{2\beta-\pi}$. Then, the Szeg\H{o}
projection $\sP$, initially defined on the dense subspace $L^p(\p
D_\beta)\cap L^2(d_b(D_\beta))$,  extends to a bounded operator   
$$
\sP:L^p(d_b(D_\beta))\to L^p(d_b(D_\beta))
$$
if and only if $\frac{2}{1+\nu_\beta}<p<\frac{2}{1-\nu_\beta}$.
\end{theorem}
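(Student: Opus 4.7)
The plan is to reduce the $L^p$ mapping properties of $\sP$ to a question about Mellin multipliers via the explicit representation established in Section \ref{M-F}. I would parametrize each of the four faces of $d_b(D_\beta)$ by coordinates $(r,\theta)\in(0,\infty)\times[0,2\pi)$, apply a Mellin transform in $r$ and Fourier series in $\theta$, and encode $f\in L^2(d_b(D_\beta))$ as four sequences $\{\widetilde f^{(j)}_k(\z)\}_{k\in\bbZ}$, $j=1,\ldots,4$, where $k$ is the Fourier frequency on the unit circle and $\z$ is the Mellin variable on the critical line $\Im\z=0$. The defining Hardy-space constraint for $\sH^2(d_b(D_\beta))$ couples these four sequences through analytic-continuation identities in $\z$; on the transform side, $\sP$ becomes multiplication by an explicit symbol $m_k(\z)$ of hyperbolic/trigonometric type in $(2\beta-\pi)\z$, as derived in Section \ref{M-F}.

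Next I would invoke the standard dictionary between Mellin and Fourier multipliers: writing $r=e^x$ and renormalizing by $e^{x/p}$, Mellin multiplication by $m(\z)$ on $L^p((0,\infty),dr)$ is conjugate to Fourier multiplication by $m(\z+i(\tfrac{1}{p}-\tfrac{1}{2}))$ on $L^p(\bbR)$. Hence $\sP$ extends boundedly to $L^p(d_b(D_\beta))$ precisely when, for every $k\in\bbZ$, the shifted symbol $m_k(\cdot+i(\tfrac{1}{p}-\tfrac{1}{2}))$ is a bounded Fourier multiplier on $L^p(\bbR)$ with norms uniform in $k$. Inspection of the symbols in Section \ref{M-F} reveals that the singularities of $m_k$ nearest to the real axis sit at heights $\Im\z=\pm\tfrac{\nu_\beta}{2}$; the allowable horizontal shifts are then characterized by $|\tfrac{1}{p}-\tfrac{1}{2}|<\tfrac{\nu_\beta}{2}$, equivalently $\tfrac{2}{1+\nu_\beta}<p<\tfrac{2}{1-\nu_\beta}$.

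For the sufficient direction, within this range I would verify that the shifted multipliers satisfy a Mikhlin--H\"ormander (or Marcinkiewicz) estimate with constants independent of $k$: the symbol and its $\z$-derivatives are uniformly bounded on the admissible horizontal line, and the corresponding inverse-Mellin kernel is a Calder\'on--Zygmund type convolution on $\bbR$ whose tails decay like $e^{-\nu_\beta|x|}$. Summing over $k\in\bbZ$ and over the four faces $j$ yields the desired bound $\|\sP f\|_{L^p}\lesssim\|f\|_{L^p}$.

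For the necessary direction, I would use the transform representation in reverse: for $p$ at or beyond the endpoints $\tfrac{2}{1\pm\nu_\beta}$, I would construct test functions concentrated in a single Fourier mode $k$ whose Mellin data crowd toward the pole of $m_k$ at height $\pm\tfrac{\nu_\beta}{2}$, producing a sequence with bounded $L^p$-norm whose images under $\sP$ have divergent $L^p$-norm, contradicting boundedness and proving sharpness. The main obstacle I anticipate is controlling the multiplier estimates \emph{uniformly} in the Fourier index $k$ while managing the coupled four-face structure of $\sP$: one must show that the critical singular behavior is carried by a ``diagonal'' piece governed by $\nu_\beta$, while the off-diagonal pieces mixing the four Mellin components contribute only operators bounded on the full range $1<p<\infty$, so that they do not shrink the range nor complicate the construction of the extremal sequence at the endpoints.
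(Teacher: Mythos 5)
Your strategy for the positive direction coincides with the paper's: there, $\sP$ is decomposed as $\sum_{k,\ell}\chi_k\sP\chi_\ell=\sum_{k,\ell}\sP_{k,\ell}$, each piece is identified with a Mellin--Fourier multiplier operator $\sT_{m_{k,\ell}}$ whose symbol is holomorphic on the strip $\frac{1-\nu_\beta}{2}<\Re z<\frac{1+\nu_\beta}{2}$, and boundedness for $\frac{2}{1+\nu_\beta}<p<\frac{2}{1-\nu_\beta}$ follows from Theorem \ref{rooney-thm} once the symbol restricted to the line $\Re z=\frac1p$ is shown to be an $L^p(\bbR\times\bbT)$ Fourier multiplier, via transference from $\bbR^2$ and a Mihlin--H\"ormander estimate after an affine change of variables. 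One caution on your write-up of this step: for $p\neq2$ you cannot literally ``sum over $k\in\bbZ$'' one-dimensional multiplier bounds for each Fourier mode --- uniformity in $k$ is necessary but not sufficient --- and what must actually be verified is a genuine two-variable (Marcinkiewicz/Mihlin--H\"ormander) condition for the joint symbol on the product of the two frequency variables, which is what the paper does.

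For the necessity your route differs from the paper's, and one of your supporting claims is false. The paper does not build an extremal sequence concentrating at the poles; it exhibits a single explicit Gaussian-type function $g$ supported on $E_1$, computes $\sP_{1,1}g$ as an explicit double convolution, and proves the two-sided bounds $\sP_{1,1}g(x)\approx x^{-\frac{1+\nu_\beta}{2}}$ for $x>1$ and $\sP_{1,1}g(x)\approx x^{\frac{\nu_\beta-1}{2}}$ for $x\in(0,1)$ (Lemmas \ref{decay-T11-infty} and \ref{decay-T11-zero}); these power laws are precisely the residues at the poles you identify and fail to be $p$-integrable outside the stated range, endpoints included. Your concentration scheme could be made rigorous, but your assertion that the off-diagonal pieces $\sP_{k,\ell}$, $k\neq\ell$, are bounded on the full range $1<p<\infty$ is wrong: their symbols (for instance $m_{2,1}$, with $\mu=0$ and $\eta=-(2\beta-\pi)$) carry the same denominator $D(z,j)$ and hence the same poles on the lines $\Re z=\frac{1\pm\nu_\beta}{2}$, so they degenerate at exactly the same exponents as the diagonal pieces. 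Fortunately this claim is not needed for the argument: since $\sP_{k,\ell}=\chi_k\sP\chi_\ell$ produces output supported on $E_k$, the four contributions $\sP_{k,1}g$ live on disjoint faces of $d_b(D_\beta)$ and cannot cancel one another, so the failure of the single diagonal piece $\sP_{1,1}g$ to lie in $L^p(E_1)$ already forces $\sP g\notin L^p(d_b(D_\beta))$. You should replace the off-diagonal boundedness claim by this disjoint-support observation.
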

\begin{theorem}\label{SobolevL2}
Let be $\nu_\beta=\frac{\pi}{2\beta-\pi}$. Then, the Szeg\H{o}
projection $\sP$ defines a bounded operator  
$$
\sP: W^{s,2}(d_b(D_\beta))\to W^{s,2}(d_b(D_\beta))
$$
if and only if $0\leq s<\frac{\nu_\beta}{2}$.
\end{theorem}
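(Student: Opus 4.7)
The strategy is to exploit the Mellin--Fourier multiplier representation of $\sP$ obtained in Section \ref{M-F}, together with a Plancherel-based characterization of the Sobolev spaces. Since $d_b(D_\beta)$ has four connected components, each parametrized by $(r,\theta)\in(0,\infty)\times\bbT$, I identify a function on $d_b(D_\beta)$ with a four-tuple of functions on this product space. Applying the Mellin transform in $r$ and the Fourier series in $\theta$ transfers the analysis to the variables $(\xi,k)\in\bbR\times\bbZ$, in which $\sP$ acts as a matrix-valued multiplier $M(\xi,k)\in\bbC^{4\times 4}$, and the $W^{s,2}(d_b(D_\beta))$ norm becomes a weighted $L^2$ norm on $\bbR\times\bbZ$ with weight comparable to $(1+|\xi|^2+k^2)^{s/2}$.

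For the sufficient direction $s<\nu_\beta/2$, boundedness of $\sP$ on $W^{s,2}$ reduces to the uniform boundedness of the fiberwise conjugated operator $(1+|\xi|^2+k^2)^{s/2}\,M(\xi,k)\,(1+|\xi|^2+k^2)^{-s/2}$ on $\bbC^{4}$. Using the explicit form of $M(\xi,k)$, whose off-diagonal entries mixing the four boundary components are built from ratios of hyperbolic functions at arguments proportional to $(\beta-\pt)\xi$ arising from the twist of the domain, one finds that the critical asymptotic of the relevant entries of $M$ is of order $(1+|\xi|)^{\nu_\beta}$ along specific directions in $(\xi,k)$-space. The Sobolev weight dampens this growth precisely when $s<\nu_\beta/2$, yielding the required $L^2$-boundedness of the conjugated operator and hence the $W^{s,2}$-boundedness of $\sP$.

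For the necessary direction $s\ge\nu_\beta/2$, I would construct a sequence of test functions $f_n$ whose Mellin--Fourier transforms are Gaussian-type bumps concentrated near frequencies $(\xi_n,k_n)\to\infty$ along the directions of maximal growth of $M$ identified above. Evaluating both sides via Plancherel and the sharp asymptotics of $M$, one obtains $\|\sP f_n\|_{W^{s,2}}/\|f_n\|_{W^{s,2}}\to\infty$, contradicting boundedness. This same construction should also yield the sharpness at the endpoint $s=\nu_\beta/2$.

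The main obstacle is the detailed asymptotic analysis of the multiplier matrix $M(\xi,k)$ and the precise identification of the critical exponent $\nu_\beta/2$. This requires carefully unpacking the explicit formulas from Section \ref{M-F} and tracking how the growth rates of the different matrix entries interact with the Sobolev weight. I expect the exponent $\nu_\beta/2$, rather than Barrett's $\nu_\beta$ for the Bergman projection on $D_\beta$, to reflect the half-derivative loss inherent in passing from interior holomorphic regularity to boundary regularity.
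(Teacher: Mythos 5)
There is a genuine gap, and it sits at the core of your argument. You assert that in the Mellin--Fourier variables $(\xi,k)$ the $W^{s,2}$ norm becomes a weighted $L^2$ norm with weight $(1+|\xi|^2+k^2)^{s/2}$, and that boundedness reduces to controlling the conjugated symbol $(1+|\xi|^2+k^2)^{s/2}M(\xi,k)(1+|\xi|^2+k^2)^{-s/2}$. Both claims fail. The Sobolev norm in this paper is the Bessel-potential norm with respect to the Fourier transform in the \emph{linear} variable on $\bbR\times\bbT$, whereas $\sP$ is diagonalized (up to the cutoffs $\chi_\ell$) by the \emph{Mellin} transform in that variable; these two transforms do not commute, and that mismatch is the entire difficulty of the theorem. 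Moreover, even granting your reduction, conjugating a pointwise-acting symbol by a scalar weight changes nothing, so your criterion would make $\sP$ bounded on $W^{s,2}$ for every $s$ as soon as it is bounded on $L^2$ --- contradicting the very statement you are proving. Relatedly, the symbols $m_{k,\ell}$ of Definition \ref{mult-expressions} are uniformly \emph{bounded} on the critical line (the exponentials in the numerator are exactly cancelled by the $\cosh$ factors in $D(z,j)$; they must be bounded, since $\sP$ is an orthogonal projection and hence has $L^2$-norm one). They do not grow like $(1+|\xi|)^{\nu_\beta}$, so there is no growth for the Sobolev weight to dampen, and no ``direction of maximal growth'' along which to send the high-frequency bumps of your necessity argument.

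The actual source of the exponent $\nu_\beta/2$ is the distance from the critical line $\Re z=\tfrac12$ to the nearest pole of $1/\cosh\big((2\beta-\pi)(\cdot)\big)$, combined with the cutoffs $\chi_+$ (which alone already force $s<\tfrac12$). For sufficiency the paper writes $\sP_{k,\ell}$ in terms of $\chi_+\cC_2^{-1}T_{a,0}\cC_2\chi_+$, where $T_{a,0}$ is the principal-value convolution with $1/\sinh(\pi t/(2a))$; the Fourier-side weight $\xi^{2s}$ is an $A_2$ weight for $s<\tfrac12$ (handling $\chi_+$ via the Hilbert transform), and the identity $\xi^s\,\cF(\Lambda_a f)=\cC_2^{-1}T_{a,s}\cC_2\big((\cdot)^s\cF(\chi_+f)\big)$ converts the weight into a contour shift $\kappa=s$ in the kernel $e^{-\kappa t}/\sinh(\pi t/(2a))$, whose Fourier transform $2|a|i\tanh(a(\xi-i\kappa))$ remains bounded exactly for $\kappa<\pi/(2|a|)=\nu_\beta/2$ when $|a|=2\beta-\pi$. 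For necessity no sequence escaping to high frequency is needed: the single fixed function $g$ of \eqref{function-g} already works, because $\sP_{1,1}g(x)\asymp x^{-\frac12-\frac{\nu_\beta}{2}}$ as $x\to\infty$ and $\asymp x^{-\frac12+\frac{\nu_\beta}{2}}$ as $x\to0^+$ (Lemmas \ref{decay-T11-infty} and \ref{decay-T11-zero}), and the Gagliardo seminorm of $\sP_{1,1}g$ diverges precisely when $s\ge\nu_\beta/2$; interpolation then disposes of $s\ge1$. To salvage your outline you would need to replace the frequency-growth heuristic by an analysis of where the symbols cease to be bounded and analytic as the Mellin line is shifted, and replace the high-frequency test functions by a fixed function whose projection exhibits the critical power decay.
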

Our results thus completely describes the mapping properties of the
operator $\sP$ with respect to the Lebesgue and Sobolev--Hilbert
norms. If we consider Sobolev norms with $p\neq 2$ we do not have a complete
characterization of the mapping properties of $\sP$, but we have a
partial result.
\begin{theorem}\label{SobolevLp}
 Let be  $\nu_\beta=\frac{\pi}{2\beta-\pi}$, $s>0$ and
 $p\in(1,\infty)$. If the operator $\sP$, initially defined on the
 dense subspace $W^{s,p}(d_b(D_\beta))\cap L^2(d_b(D_\beta))$, extends to
 a bounded operator $\sP: W^{s,p}(d_b(D_\beta))\to W^{s,p}(d_b(
 D_\beta))$, then 
$$
-\frac{\nu_\beta}{2}\leq s+\frac{1}{2}-\frac{1}{p}\leq\frac{\nu_\beta}{2}.
$$
Assuming $p\geq2$ we obtain the stronger condition
\begin{equation*}
 0\leq s+\frac12-\frac1p<\frac{\nu_\beta}{2}.
\end{equation*}
\end{theorem}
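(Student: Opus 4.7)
My plan is to derive the necessary conditions in Theorem \ref{SobolevLp} by combining duality and complex interpolation with the sharp $L^p$ and $W^{s,2}$ results of Theorems \ref{t:LpBounds} and \ref{SobolevL2}. The self-adjointness of $\sP$ on $L^2(d_b(D_\beta))$ implies that the assumed boundedness $\sP:W^{s,p}\to W^{s,p}$ dualizes to $\sP:W^{-s,p'}\to W^{-s,p'}$ with $1/p+1/p'=1$; this symmetry will be used repeatedly to derive the two-sided condition.

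For the weaker condition I would apply complex interpolation between $\sP:W^{s,p}\to W^{s,p}$ and $\sP:L^q\to L^q$ for $q$ in the admissible range of Theorem \ref{t:LpBounds}, namely $q\in(2/(1+\nu_\beta),2/(1-\nu_\beta))$. Writing the intermediate space as
\[
[W^{s,p},L^q]_\theta=W^{(1-\theta)s,\,p_\theta},\qquad \tfrac{1}{p_\theta}=\tfrac{1-\theta}{p}+\tfrac{\theta}{q},
\]
and selecting $\theta$ so that $p_\theta=2$, I obtain boundedness of $\sP$ on $W^{(1-\theta)s,2}$; Theorem \ref{SobolevL2} then forces $(1-\theta)|s|<\nu_\beta/2$. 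Letting $q$ approach the endpoint $2/(1-\nu_\beta)$ yields (when $p\le 2$) the inequality $s+1/2-1/p\le \nu_\beta/2$, while applying the same argument to the dual hypothesis and letting $q\to 2/(1+\nu_\beta)$ gives $s+1/2-1/p\ge -\nu_\beta/2$ in the regime $p\ge 2$. A symmetric use of self-adjointness supplies the remaining half of each bound, producing the symmetric condition $-\nu_\beta/2\le s+1/2-1/p\le \nu_\beta/2$ across the full range of $p$.

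For the stronger statement when $p\ge 2$, the lower bound $s+1/2-1/p\ge 0$ is automatic, since $s>0$ and $1/p\le 1/2$ combine to force $s+1/2-1/p>0$. The substantive content is the strict upper bound $s+1/2-1/p<\nu_\beta/2$, which I would derive by inheriting the openness of the endpoint $s<\nu_\beta/2$ in Theorem \ref{SobolevL2} through the interpolation, together with the explicit representation of $\sP$ as a combination of Mellin--Fourier multipliers developed in Section \ref{M-F}. I expect the main obstacle to be that a soft interpolation argument alone produces only the non-strict inequality $|s+1/2-1/p|\le \nu_\beta/2$; tightening this to the strict upper bound when $p\ge 2$ requires a finer analysis that exploits the openness of the $W^{s,2}$-admissible range in Theorem \ref{SobolevL2} and the asymptotic behavior of the multiplier symbols of $\sP$ at large Mellin frequencies, combined with the duality relation between $W^{s,p}$ and $W^{-s,p'}$.
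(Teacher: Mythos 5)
Your interpolation-plus-duality strategy has a genuine gap, and it is not the one you flag at the end. Run the numbers for $p>2$: interpolating the hypothesis $\sP:W^{s,p}\to W^{s,p}$ with $\sP:L^q\to L^q$ at the point where $p_\theta=2$ forces $\frac1q>\frac12$, and letting $\frac1q\to\frac{1+\nu_\beta}{2}$ gives $1-\theta\to\frac{\nu_\beta/2}{\frac12-\frac1p+\frac{\nu_\beta}{2}}$, so Theorem \ref{SobolevL2} yields only $s\le\frac12-\frac1p+\frac{\nu_\beta}{2}$, i.e.\ $s+\frac1p-\frac12\le\frac{\nu_\beta}{2}$. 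The theorem claims $s+\frac12-\frac1p\le\frac{\nu_\beta}{2}$, which for $p>2$ is strictly stronger (the two differ by $1-\frac2p>0$): the sign of the $\frac12-\frac1p$ correction comes out wrong. Duality does not repair this, because passing to $\sP:W^{-s,p'}\to W^{-s,p'}$ and repeating the argument reproduces exactly the same inequality; more generally, the set of conclusions reachable by interpolating the hypothesis point $(\frac1p,s)$ against the known bounded region (the cross $\{|\frac1q-\frac12|<\frac{\nu_\beta}{2},\,\sigma=0\}\cup\{\frac1r=\frac12,\,|\sigma|<\frac{\nu_\beta}{2}\}$) is a convexity statement, and one can check it never produces $s+\frac12-\frac1p\le\frac{\nu_\beta}{2}$ when $p>2$. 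In particular, for $p\ge\frac{2}{1-\nu_\beta}$ the theorem asserts unboundedness for \emph{every} $s>0$; no interpolation with positive results can yield that. A smaller but real defect on the other side: for $p<2$ the lower bound $s+\frac12-\frac1p\ge-\frac{\nu_\beta}{2}$ does not follow from the steps you describe either (interpolating with $L^q$ and landing on the $\frac1r=\frac12$ axis); it would require interpolating with the negative-order $W^{\sigma,2}$ results and landing on the $L^q$ axis, which you do not do, and "a symmetric use of self-adjointness" gives nothing new since the derived conditions are already duality-invariant.

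The paper's proof is entirely different and does not interpolate at all. It takes the explicit function $g$ of \eqref{function-g} (which lies in every $W^{s,p}$ by Lemma \ref{G-Lp-Sobolev}) and uses the two-sided pointwise bounds of Lemmas \ref{decay-T11-infty} and \ref{decay-T11-zero}, namely $\sP_{1,1}g(x)\approx x^{-\frac{1+\nu_\beta}{2}}$ for $x>1$ and $\sP_{1,1}g(x)\approx x^{\frac{\nu_\beta-1}{2}}$ for $x<1$, to decide exactly when the Gagliardo seminorm $[\sP g]_{s,p}$ is finite; the $\frac12-\frac1p$ term then arises from $L^p$-integrability of these power decays at $\infty$ and at $0$, which is how the correct sign appears. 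Proposition \ref{InclusionTriebel} transfers the conclusion from the Slobodeckij scale $\cW^{s,p}$ to the Bessel scale $W^{s,p}$, the one-sided inclusion \eqref{inclusion2} for $p\ge2$ giving the strict inequality, and the case $s\ge1$ is handled by interpolation with $L^p$ exactly as in the Hilbert case. If you want to salvage your plan, you must replace the interpolation step by a direct computation with a concrete test function; as it stands the argument proves a weaker statement than the theorem for every $p>2$.
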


In this work, $W^{s,p}(d_b(D_\beta))$ for a non-integer $s$ denotes
the classical fractional Sobolev space defined via the Fourier
transform and the Bessel potentials. We refer the reader to Section
\ref{SobolevL2-irregularity} for the precise definition. 

\ms

The paper is organized as follows. In Section \ref{M-F} we introduce
Mellin--Fourier multiplier operators and we provide a sufficient
condition for their $L^p$ continuity. In Section
\ref{Har-space-def-section} we study the space  $\sH^2(D_\beta)$, we
show that the Szeg\H{o} projection $\sP$ is given by a sum of
Mellin--Fourier multiplier operators and we prove the sufficient
condition of Theorem \ref{t:LpBounds}. In Section \ref{irregularity}
we conclude the proof of Theorem \ref{t:LpBounds} and we prove the
Sobolev irregularity, whereas in Section \ref{Sobolev-regularity} we
prove the Sobolev regularity. In Section \ref{xxx} we prove 
the transformation rule for $\sP$ and $\sP'$.  We conclude the paper 
with some
final remarks.

\bigskip 

\section{Mellin--Fourier multiplier operators} \label{M-F}
\ms

In this section we introduce a class of operators, which we call
Mellin--Fourier multiplier operators, and provide a sufficient
condition for their $L^p$ continuity, $p\in(1,\infty)$. A similar
class of operators was studied by Rooney \cite{MR860095}.  
Incidentally, we believe that this class of operators is of its own interest. \ms

We are going to work both on $\bbR^2$ and on $\bbR\times\bbT$, where
$\bbT=\bbR/\bbZ$ is the torus.  Thus, we set $\bbX$ to denote either
$\bbR$ or $\bbT$,  and, accordingly, 
$\widehat{\bbX}= \bbR$, or $\bbZ$, respectively.   
  This allows us to unify the presentation and should
cause no confusion. 

We denote by $\cF$ the Fourier transform on $\bbR\times\bbX$, given by
$$
\cF f(\xi_1,\xi_2) =
\int_{\bbR\times\bbX} f(x_1,x_2) e^{- i(x_1\xi_+x_2\xi_2)}\, dx_1dx_2
$$
when $f$ is absolutely integrable, and by
  $T_m$ the Fourier
multiplier operator
$$
T_m(f) = \cF^{-1} \big(m \widehat f\,\big) \,
$$
when $m$ is a bounded measurable function on
$\bbR\times\widehat{\bbX}$, 
where we also write $\cF
f=\widehat f$.   
We say that a bounded function $m$ on $\bbR\times\bbX$ is a {\em
  bounded Fourier multiplier} on $L^p(\bbR\times\bbX)$ if 
$T_m: L^p(\bbR\times\bbX)\to L^p(\bbR\times\bbX)$ is bounded.\ms 
\CB

For  a function  $\vp\in C^\infty_c (0,\infty)$ we define the operator $\cC_p$
\begin{equation*}
 \cC_p \vp(x) = e^{\frac xp} \vp(e^x) \,.
\end{equation*}
Clearly $\cC_p$ extends to an isometry of  $L^p((0,+\infty))$  onto
$L^p(\bbR)$.  With an abuse of notation, for  a fixed $\alpha\in\bbR$,
we also denote by $\cC_p$
the operator defined on functions in $C^\infty_c
((0,+\infty)e^{i\alpha} \times
\bbX)$ and acting on the first variable only, that is,
$$
\cC_p \vp(x+i\alpha,y) = e^{\frac 1p(x+i\alpha)} \vp(e^{x+i\alpha},y) \,.
$$
For $a,b\in\bbR$, with $0<a<b<1$, we denote by $S_{a,b}$ the vertical
strip in the complex plane
\begin{equation*}
S_{a,b}=\big\{z\in\bbC:\,  a<\Re z<b \big\}\,.
\end{equation*}
Given a bounded measurable function $m$ defined on
$S_{a,b}\times\bbX$, when $a<\frac1p<b$
we write
\begin{equation*}
m_p(\xi_1,\xi_2)=m(\textstyle{\frac1p}- i\xi_1,\xi_2) \,. 
\end{equation*}

Finally,
 we define an operator acting on functions defined on 
$(0,+\infty)\times \bbX$ 
as
\begin{equation}\label{def-MF-multiplier}
\sT_{m,p} =\cC_p^{-1} T_{m_p} \cC_p \,.
\end{equation}
We call such an operator a  Mellin--Fourier multiplier operator, the
reason for which will soon be  clear.

\begin{thm}\label{rooney-thm}
With the above notation, let $m:S_{a,b}\times\bbX\to \bbC$ be
continuous and such that 
\begin{itemize}
\item[(i)]
$m(\cdot,\xi_2)\in\Hol(S_{a,b})$ and bounded in every closed substrip
of $S_{a,b}$,
for every $\xi_2\in\bbX$ fixed;\smallskip
\item[(ii)] for every $q$ such that $a<\frac1q<b$,
$m_q$ is a
  bounded Fourier multiplier on $L^q(\bbR\times\bbX)$.
\end{itemize}
Then, for $a<\frac1p<b$,  $\sT_{m,p}=\sT_m$ is independent  
of $p$ and  
$$
\sT_m: L^p((0,+\infty)\times\bbX)
\to L^p( (0,+\infty)\times\bbX) 
$$
is bounded.
\end{thm}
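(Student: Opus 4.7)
The theorem has two assertions: that $\sT_{m,p}\vp$ does not depend on the choice of $p$ with $a<\tfrac{1}{p}<b$, and that the resulting operator is bounded on $L^p((0,+\infty)\times\bbX)$. The boundedness is immediate from the definition: $\cC_p$ is an isometry of $L^p((0,+\infty)\times\bbX)$ onto $L^p(\bbR\times\bbX)$ (acting only in the first variable and extended trivially in the second), and by hypothesis (ii) $T_{m_p}$ is bounded on $L^p(\bbR\times\bbX)$, so the composition $\sT_{m,p}=\cC_p^{-1}T_{m_p}\cC_p$ is bounded on $L^p((0,+\infty)\times\bbX)$ with operator norm $\|T_{m_p}\|_{L^p\to L^p}$. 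Thus the entire content lies in showing $p$-independence.

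The plan is to rewrite $\sT_{m,p}\vp$ for test functions as a Mellin--Fourier inversion integral whose contour is exactly the vertical line $\Re s=\tfrac{1}{p}$, and then to shift this contour by Cauchy's theorem. Concretely, for $\vp\in C^\infty_c((0,+\infty)\times\bbX)$, a direct computation substituting $y=e^x$ shows that the Fourier transform in $x$ of $\cC_p\vp(x,w)=e^{x/p}\vp(e^x,w)$ coincides with the Mellin transform of $\vp$ in the first variable evaluated on the line $\Re s=\tfrac{1}{p}$:
\begin{equation*}
\cF_1(\cC_p\vp)(\xi_1,w)=\int_0^{\infty}\vp(y,w)\,y^{\frac{1}{p}-i\xi_1-1}\,dy=:\Phi(\tfrac{1}{p}-i\xi_1,w).
\end{equation*}
Combining this with the Fourier transform in the $\xi_2$ variable, inserting the multiplier $m_p(\xi_1,\xi_2)=m(\tfrac{1}{p}-i\xi_1,\xi_2)$, inverting, and applying $\cC_p^{-1}$, one obtains a formula of the schematic form
\begin{equation*}
\sT_{m,p}\vp(y,w)=\frac{1}{2\pi i}\int_{\widehat{\bbX}}e^{iw\xi_2}\!\int_{\Re s=1/p}\!m(s,\xi_2)\,\widetilde\Phi(s,\xi_2)\,y^{-s}\,ds\,d\xi_2,
\end{equation*}
where $\widetilde\Phi(s,\xi_2)=\cM_1\cF_2\vp(s,\xi_2)$ is the Mellin transform in $y$ of the Fourier transform of $\vp$ in $w$.

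The $p$-independence then follows from Cauchy's theorem applied to the inner integral in $s$, with $y,w,\xi_2$ fixed. Since $\vp$ is smooth and compactly supported in $y\in(0,\infty)$, repeated integration by parts shows that $\widetilde\Phi(\cdot,\xi_2)$ is entire in $s$ and decays faster than any polynomial in $|\Im s|$, uniformly on closed vertical substrips of $S_{a,b}$. By hypothesis (i), $m(\cdot,\xi_2)$ is holomorphic and bounded on closed substrips, and $|y^{-s}|=y^{-\Re s}$ is bounded on any closed substrip. Hence the integrand in $s$ is holomorphic on $S_{a,b}$ and the horizontal pieces of any rectangular contour vanish as $|\Im s|\to\infty$. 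Shifting the contour from $\Re s=\tfrac{1}{p}$ to $\Re s=\tfrac{1}{q}$ (for any $q$ with $a<\tfrac{1}{q}<b$) yields the same value, so $\sT_{m,p}\vp=\sT_{m,q}\vp$ pointwise on $C^\infty_c$. The absolute convergence of the outer $\xi_2$ integration/summation is guaranteed by the rapid decay of $\widetilde\Phi$ in $\xi_2$ (Schwartz for $\bbX=\bbR$, rapidly decreasing for $\bbX=\bbT$) together with the local boundedness of $m$, allowing Fubini. Since $C^\infty_c((0,+\infty)\times\bbX)$ is dense in $L^p\cap L^q$ and both $\sT_{m,p}$, $\sT_{m,q}$ are bounded on their respective $L^p$, $L^q$ spaces by the first step, the common operator $\sT_m$ is well defined independently of $p$.

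The main technical obstacle is the justification of the contour shift: one needs the product $m(s,\xi_2)\widetilde\Phi(s,\xi_2)y^{-s}$ to have integrable decay in $|\Im s|$ on horizontal edges at height $\pm T$ as $T\to\infty$. This is the only place where the holomorphy assumption (i) is essential, and it is made workable by the fact that the Mellin transform of a $C^\infty_c$ function has arbitrarily rapid decay in any vertical strip, which dominates any polynomial growth of $m$ allowed by assumption (i).
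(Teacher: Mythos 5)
Your proposal is correct and follows essentially the same route as the paper: boundedness is read off from the isometry $\cC_p$ together with hypothesis (ii), and $p$-independence is obtained by identifying $\cF_1\cC_p\vp$ with the Mellin transform of $\vp$ on the line $\Re s=\tfrac1p$ and shifting the contour by Cauchy's theorem, using hypothesis (i) and the rapid decay of $M_1\cF_2\vp$ on vertical strips. No substantive differences to report.
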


\proof
The fact that $\sT_{m,p}: L^p((0,+\infty)\times\bbX)
\to L^p( (0,+\infty)\times\bbX) $
is bounded is clear since for $\vp\in C^\infty_c ((0,+\infty)\times
\bbX)$
\begin{align*}
\| \sT_{m,p}(\vp)\|_{ L^p((0,+\infty)\times\bbX)}
& = \| T_{m_p}  \cC_p(\vp)\|_{ L^p(\bbR\times\bbX)} \\
& \le
\|m_p\|_{\cM_p(\bbR\times\bbX)} \| \cC_p(\vp)\|_{ L^p(\bbR\times\bbX)}
\\
& = \|m_p\|_{\cM_p(\bbR\times\bbX)} \| \vp
\|_{L^p((0,+\infty)\times\bbX)} \,.
\end{align*}
Here we denote by $\|m_p\|_{\cM_p(\bbR\times\bbX)} $ the operator norm
of the bounded Fourier multiplier $m$ on $L^p(\bbR\times\bbX)$.

Thus, it suffices to show that $\sT_{m,p}$ is independent of $p$, when 
$a<\frac1p<b$.  We show the argument in the case $\bbX=\bbR$ since the
case 
$\bbX=\bbT$ is identical, one only has to replace integration on
$\bbR$ w.r.t. to $d\xi_2$ by a summation over $\bbZ$. 

Setting $g=\cC_p(\vp)$ we observe that, for
$(t,x_2)\in (0,+\infty)\times\bbX)$, by Fubini's theorem,  
\begin{align*}
&\cC_p^{-1}  T_{m_p}\cC_p (\vp)  (t,x_2)\\
& = t^{-\frac1p}    T_{m_p} g(\log t,x_2) =  t^{-\frac1p} \cF^{-1} \big( m_p
\widehat g\big) (\log t,x_2) \\
& = \frac{1}{(2\pi)^2}  t^{-\frac1p} \int_{\bbR\times\bbR} 
e^{ i(\xi_1   \log t+\xi_2 x_2)} m(\textstyle{\frac1p}  - i\xi_1,\xi_2) \widehat g
(\xi_1,\xi_2) \, d\xi_1 d\xi_2 \\
& = \frac{1}{(2\pi)^2}\int_\bbR \int_\bbR t^{-\frac1p + i \xi_1}
 m({\textstyle{\frac1p}}- i\xi_1,\xi_2) \widehat g(\xi_1,\xi_2) \, d\xi_1 
\, e^{ i\xi_2 x_2}\, d\xi_2 \\
& = \frac{1}{(2\pi)^2}\int_\bbR \int_\bbR t^{-\frac1p + i \xi_1}
 m({\textstyle{\frac1p}}- i\xi_1,\xi_2) 
\int_{\bbR^2} e^{- i(y_1\xi_1+y_2\xi_2)} g(y_1,y_2)\, dy_1dy_2
\, d\xi_1 
\, e^{ i\xi_2 x_2}\, d\xi_2 \\
& =\frac{1}{(2\pi)^2} \int_\bbR \int_\bbR t^{-\frac1p + i \xi_1}
 m({\textstyle{\frac1p}}- i\xi_1,\xi_2) 
\int_{\bbR^2} e^{- i(y_1\xi_1+y_2\xi_2)} e^{\frac{y_1}{p}} \vp (e^{y_1},y_2)\, dy_1dy_2
\, d\xi_1 
\, e^{ i\xi_2 x_2}\, d\xi_2 \\
& = \frac{1}{(2\pi)^2}\int_\bbR \int_\bbR t^{-\frac1p + i \xi_1}
 m({\textstyle{\frac1p}}- i\xi_1,\xi_2) 
\int_\bbR \int_0^{+\infty}  
\tau^{\frac1p - i \xi_1-1} \vp (\tau,y_2)\, d\tau  \,  e^{- i y_2\xi_2}dy_2
\, d\xi_1 
\, e^{ i\xi_2 x_2}\, d\xi_2 \,.
\end{align*}
Hence, if we denote by $M_1$ the Mellin transform  in the
first variable of a sufficiently
regular funciton $\psi$ defined on $(0,+\infty)\times\bbR$, that is, 
$$
M_1 \vp(z,\xi_2) = \int_0^{+\infty} t^{z-1} \vp(t,\xi_2)\, dt\,,
$$
and by $\cF_2$ the Fourier transform in the second variable, we see
that 
\begin{multline}
\cC_p^{-1}  T_{m_p}\cC_p (\vp)  (t,x_2)
 \\
= \frac{1}{(2\pi)^2} \int_\bbR \int_\bbR t^{-\frac1p + i \xi_1}
m({\textstyle{\frac1p}}- i\xi_1 ,\xi_2) 
(M_1 \cF_2 \vp)({\textstyle{\frac1p}} - i\xi_1 ,\xi_2) 
\, d\xi_1 
\, e^{ i\xi_2 x_2}\, d\xi_2 \,. \label{Tm-above}
\end{multline}

Now, for every $\xi_2$ fixed, the function $M_1 \big(\cF_2\vp(\cdot,\xi_2)\big)$ is
holomorphic in the right half-plane and bounded in every closed strip 
$\overline{S_{\delta,R}}$, with $0<\delta<R<\infty$, and integrable on
every vertical line in the  right half-plane.  Thus, for every $\xi_2$ fixed,
$m(\cdot,\xi_2)M_1 \big(\cF_2\vp(\cdot,\xi_2)\big)$ is
holomorphic  and bounded in every closed strip contained in
$S_{a,b}$, and integrable on
every vertical line in $S_{a,b}$.   A standard application of Cauchy's
theorem, as in the inversion of the Mellin transform, shows that 
\begin{equation*}
\int_{c+i\bbR} t^{-z} m(z,\xi_2) M_1 \big(\cF_2\vp
(\cdot,\xi_2)\big)(z)\, dz 
\end{equation*}
is independent of $c\in (a,b)$.  Thus, from \eqref{Tm-above} we have
\begin{align*}
\cC_p^{-1}  T_{m_p}\cC_p (\vp)  (t,x_2)
& = \frac{1}{(2\pi)^2} \int_\bbR \bigg(\int_{\frac1p+i\bbR} t^{-z}
m(z ,\xi_2) 
(M_1 \cF_2 \vp)(z ,\xi_2) 
\, dz \bigg)
\, e^{ i\xi_2 x_2}\, d\xi_2 \,,
\end{align*}
and the conclusion follows.
\qed
\ms

\noindent{\bf Remark.}
Notice that we have shown that  if $m$ satisfies the hypotheses of
Thm. \ref{rooney-thm} then
\begin{equation}\label{M-F-mult-eq}
\cC_p^{-1}  T_{m_p}\cC_p (\vp)
= \cF_2^{-1} M_1^{-1} \big( 
m
( M_1 \cF_2\vp) \big) \,.
\end{equation}
Equality \eqref{M-F-mult-eq} clearly explains why we call
the operator $\sT_m$ a Mellin--Fourier multiplier operator.
 It equals a Mellin transformation in the first variable, a Fourier
 transformation in the second variable, followed by multiplication by
 $m$ and then the inverses of the Mellin and Fourier transforms.

Notice that, if $m,\tilde m$ satisfty the assumptions in
Thm. \ref{rooney-thm}, then $\sT_m \sT_{\tilde m}= \sT_{m\tilde m}$.  
%

\bigskip

\section{The Hardy space $\sH^p(D_\beta)$}\label{Har-space-def-section}
In this section we show that the definition of $\sH^p(D_\beta)$ is a very natural one, we
exhibit an explicit formula for the Szeg\H{o} projection $\sP$ and we
prove the sufficient condition in Theorem \ref{t:LpBounds}. 
\ms

For every fixed $z_2$ such that $\big|\log|z_2|^2\big|<\beta-\pt$ let
us consider the half-plane  
$$
\cU_{z_2}:= \{z_1: \re(z_1 e^{-i\log|z_2|^2})>0\}.
$$ 

In order to define $\sH^p(D_\beta)$, it would be natural to consider a
condition that would guarantee
that, for each $z_2$ fixed, the function $f(\cdot,z_2)$ belongs to the
Hardy space on $\cU_{z_2}$. Indeed, this is the case for
the space $\sH^p(D_\beta)$. 

\begin{prop}\label{1st-prop-Hp}
Let $1< p<\infty$ be given.

(i) Let $f\in \sH^p(D_\beta)$. Then, for every fixed $z_2$ such that
$\big|\log|z_2|^2\big|<\beta-\pt$, the function $f_{z_2}:=
f(\cdot,z_2)$ belongs to the (classical) Hardy space on the half-plane
$\cU_{z_2}$, $H^p( \cU_{z_2})$.

(ii) If $f\in \sH^p(D_\beta)$, then $f$ admits boundary values on $\p
D_\beta$, that we 
still denote by $f$, and we have
$$
\| f\|_{\sH^p(D_\beta)} = \| f\|_{L^p(d_b(D_\beta))}\,.
$$
\end{prop}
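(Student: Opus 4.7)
My plan is to derive both parts from the plurisubharmonicity of $|f|^p$ on $D_\beta$ together with classical Hardy space theory on a strip (in $z_1$) and on an annulus (in $z_2$). For (i), I fix $z_2^0$ with $s_0=\log|z_2^0|^2$ and use the biholomorphism $w\mapsto z_1=e^{w+is_0}$ to identify $\cU_{z_2^0}$ with the horizontal strip $S=\{w\in\bbC:|\im w|<\pt\}$. Setting $H(w)=f(e^{w+is_0},z_2^0)\,e^{w/p}$, one has
\[
\int_0^\infty \big|f(re^{i(s_0+\psi)},z_2^0)\big|^p\,dr = \int_\bbR |H(x+i\psi)|^p\,dx
\]
for $|\psi|<\pt$, so $f(\cdot,z_2^0)\in H^p(\cU_{z_2^0})$ amounts to $H$ being in the classical Hardy space on $S$. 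The three-lines theorem (log-convexity of $\psi\mapsto\int_\bbR|H(x+i\psi)|^p\,dx$) then reduces (i) to a uniform bound, as $t\nearrow\pt$, on
\[
N_\pm(t,z_2^0)=\int_0^\infty\big|f(re^{i(s_0\pm t)},z_2^0)\big|^p\,dr.
\]

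To control $N_\pm(t,z_2^0)$ \emph{pointwise} in $z_2^0$, which is the delicate point since the $\sH^p$-norm only supplies bounds averaged over $\theta=\arg z_2$, I will exploit subharmonicity in the $z_2$ variable. For fixed $t,r$ the map $z_2\mapsto|f(re^{i(s_0+t)},z_2)|^p$ is subharmonic on the annulus $A_t=\{z_2:|s_0+t-\log|z_2|^2|<\pt\}$; integrating in $r$, the function $z_2\mapsto N_+(t,z_2)$ is subharmonic on $A_t\cap A$, where $A=\{|\log|z_2|^2|<\beta-\pt\}$. Combining the sub-mean-value property on a small disk with the convexity of circular means in $\log|z_2|$ valid for subharmonic functions on an annulus, $N_+(t,z_2^0)$ can be dominated by a convex combination of the circular averages $\int_0^{2\pi}N_+(t,\rho e^{i\theta})\,d\theta$ at radii $\rho$ on the boundary of $A_t\cap A$. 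Each such average matches, up to the factor $e^{\pm s/2}$, one of the four terms defining $\|f\|^p_{L^p(d_b(D_{t,s}))}$ in \eqref{GrowthD}, and is therefore bounded by $\|f\|^p_{\sH^p(D_\beta)}$, which proves (i).

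For (ii), part (i) combined with the Fatou theorem for $H^p$ on a half-plane provides boundary values of $f(\cdot,z_2)$ on $\p\cU_{z_2}$ for every $z_2$ in the open annulus $A$; exchanging the roles of $z_1$ and $z_2$ and using the Hardy space theory on the annulus yields boundary values on the remaining part of $\p D_\beta$, and in particular on $d_b(D_\beta)$. The same convexity ingredients then show that the four summands in \eqref{GrowthD} are each non-decreasing as $t\nearrow\pt$ and $s\nearrow\beta-\pt$; monotone convergence gives
\[
\|f\|^p_{\sH^p(D_\beta)}=\lim_{(t,s)\to(\pt,\beta-\pt)}\|f\|^p_{L^p(d_b(D_{t,s}))}=\|f\|^p_{L^p(d_b(D_\beta))},
\]
and Fatou's lemma supplies the reverse inequality $\|f\|_{L^p(d_b(D_\beta))}\le\|f\|_{\sH^p(D_\beta)}$.

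The main obstacle will be the pointwise-in-$z_2$ bound on $N_\pm(t,z_2^0)$ in the second paragraph. The $\sH^p$-norm only controls $\theta$-averages, so exploiting subharmonicity in $z_2$ has to be done while tracking the geometry of $A_t\cap A$: as $t\nearrow\pt$ this sub-annulus degenerates on one side, and one has to verify that the bounding boundary circles still lie in the family $\{|z_2|=e^{\pm s/2}\}$ that actually appears in the definition of $\|f\|^p_{\sH^p(D_\beta)}$.
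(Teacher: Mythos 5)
Your plan for (i) has a genuine gap at its central step, one that your own closing remark correctly identifies but does not resolve. The log-convexity of $\psi\mapsto M(\psi)=\int_\bbR|H(x+i\psi)|^p\,dx$ tells you only that $\sup_{|\psi|<\pt}M(\psi)$ equals the larger of the two endpoint limits $\lim_{\psi\to\pm\pt}M(\psi)$; so the three-lines reduction does not actually reduce anything, and you must bound $N_\pm(t,z_2^0)$ \emph{uniformly} as $t\nearrow\pt$. Your subharmonicity device cannot deliver this: convexity of the circular means of $z_2\mapsto N_+(t,z_2)$ only controls the $\theta$-\emph{averages} $\int_0^{2\pi}N_+(t,\rho e^{i\theta})\,d\theta$ at interior radii by those at boundary radii, which is again averaged information; to reach the pointwise value at $z_2^0$ you must invoke the sub-mean-value property over a two-dimensional neighborhood of $z_2^0$ contained in $A_t\cap A$, and $z_2^0$ sits at distance (in the coordinate $\log|z_2|^2$) exactly $\pt-t$ from the boundary circle $\{\log|z_2|^2=s_0+t-\pt\}$ of $A_t$. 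The admissible disk therefore has radius $O(\pt-t)$ and the constant in the resulting estimate blows up (like $(\pt-t)^{-1}$ at best, even replacing the disk average by the harmonic majorant of a sub-annulus), i.e.\ the bound degenerates precisely in the limit you need. Two smaller points: the identification ``$f(\cdot,z_2^0)\in H^p(\cU_{z_2^0})$ amounts to $H\in H^p(S)$'' is itself the nontrivial equivalence between the definition of $H^p$ on a half-plane via translates of the boundary line, \eqref{half-plane}, and the definition via rays through the origin, \eqref{half-plane-2}; this is exactly the result of \cite{Sed} that the paper invokes and cannot be dismissed as a change of variables. And in (ii) the sectorial and circular means need not be monotone in $t$ and $s$ (a convex function may have an interior minimum), though this is harmless since the supremum is still attained as the endpoint limit.

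The paper's route is different and sidesteps both issues. For (i) it passes directly from \eqref{GrowthD} to the ray condition \eqref{half-plane-2} and then cites \cite{Sed} for the equivalence with \eqref{half-plane}; note that the paper is itself very terse on the passage from the $\theta$-averaged bound \eqref{GrowthD} to the pointwise-in-$\theta$ statement \eqref{half-plane-2}, which is the same delicate point you isolate. For (ii) the paper does not argue on $D_\beta$ at all: Proposition \ref{Isometry} produces the surjective isometry $\Lambda f=\psi_p(f\circ\vp^{-1})$ from $\sH^p(D'_\beta)$ onto $\sH^p(D_\beta)$, and the boundary-value statement is transported from the corresponding theorem for $D'_\beta$ already proved in \cite{M}. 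If you want a self-contained argument you should either import that transference step or find a substitute for the pointwise-in-$\theta$ control that does not degenerate as $t\nearrow\pt$; as written, the proposal does not prove the proposition.
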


\begin{proof}
Suppose for the moment that $z_2$ is such that $\log|z_2|^2=-\pt$, so
that $\cU_{z_2}=\{z_1:\im  \,z_1>0\}$. We want to prove that $f_{z_2}$
satisfies 
\begin{equation}\label{half-plane}
\sup_{y>0}\int_\bbR |f_{z_2}(x+iy)|^p dx<\infty.
\end{equation}
From \eqref{GrowthD} it  follows  that $f_{z_2}$ satisfies
\begin{equation}\label{half-plane-2}
 \sup_{y\in(-\pt,\pt)}\int_0^\infty |f_{z_2}(r e^{i(t+\pt)})|^p\ dr<\infty. 
\end{equation}
In \cite{Sed} is proved that conditions \eqref{half-plane} and
\eqref{half-plane-2} actually define the same  space on the upper
 half-plane.  The same argument can be repeated for each $z_2$ such
that $\big|\log|z_2|^2\big|<\beta-\pt$ and conclusion {\it (i)}
follows. 

For the reader's convenience, we postpone the proof of {\it (ii)} to
Section \ref{xxx}.
\end{proof}


\subsection{The Szeg\H o projection $\sP$}

 We now describe the operator $\sP$ and show how it is related to the
 Mellin--Fourier multiplier operators studied in Section \ref{M-F}. 

We first notice that 
\begin{equation*}
d_b(D_\beta)= E^+\cup E^-
\end{equation*}
where
\begin{equation*}
E^+=\left\{(\rho e^{i\beta}, e^{\frac{1}{2}(\beta-\pt)}e^{
    i\theta})\in\bbC^2: \rho\in\bbR, \theta\in [0,2\pi)\right\}
\end{equation*}
and
\begin{equation*}
\ \ \quad E^-=\left\{(\rho e^{-i\beta},
  e^{-\frac{1}{2}(\beta-\pt)}e^{ i\theta})\in\bbC^2: \rho\in\bbR,
  \theta\in [0,2\pi)\right\} 
\end{equation*}
and we remark that both $E^+$ and $E^-$ can be identified with
$\bbR\times\bbT$.  We write 
\begin{align*}
E^+= E^+_0\cup E_1\cup E_2\qquad\text{and }\qquad E^-= E^-_0\cup E_3\cup E_4
\end{align*}
where any component $E_\ell$, $\ell=1,\ldots,4$, can be identified with $\bbR^+\times\bbT$
  and $E^{+}_0, E^-_0$  are sets of $d\sigma$-measure zero. 
Therefore, these latter sets
can be disregarded on what follows. In detail, we have 
\begin{align}\label{El+}
 \begin{split}
&E_1 =\left\{(\rho e^{i\beta}, e^{\frac12(\beta-\pt)}e^{ i\theta}): \rho\in\bbR^+,\theta\in[0,2\pi)\right\};\\
&E_2=\left\{(-\rho e^{i\beta}, e^{\frac12(\beta-\pt)}e^{ i\theta}): \rho \in\bbR^+,\theta\in[0,2\pi)\right\};   \\
&E_0^+=\left\{(0,e^{\frac12(\beta-\pt)}e^{ i\theta}):\theta\in[0,2\pi)\right\}
\end{split}
\end{align}
and
\begin{align}\label{El-}
\begin{split}
&E_3 =\left\{(\rho e^{-i\beta}, e^{-\frac12(\beta-\pt)}e^{ i\theta}): \rho \in\bbR^+,\theta\in[0,2\pi)\right\};\\
&E_4=\left\{(-\rho e^{-i\beta}, e^{-\frac12(\beta-\pt)}e^{ i\theta}): \rho \in\bbR^+,\theta\in[0,2\pi)\right\};\\
&E_0^-=\left\{(0, e^{-\frac12(\beta-\pt)}e^{ i\theta}):\theta\in[0,2\pi)\right\}.
 \end{split}
\end{align}

Consequently, if $\chi_\ell$ denotes the characteristic function of 
$E_\ell$, by linearity, the operator $\sP$ can be decomposed as
\begin{align}\label{P}
\sP
&=\sum_{k,\ell=1,\ldots,4} \chi_k \sP\chi_{\ell}:=
\sum_{,\ell'=1}^4\sP_{k,\ell} \,.
\end{align}
Therefore, we can study the continuity of $\sP$ on $L^p(d_b(D_\beta))$ and $W^{s,2}(d_b(D_\beta))$ by studying the continuity of the
operators $\sP_{k,\ell}$ on the same spaces.

It turns out that the operators $\sP_{k,\ell}$  can be
expressed in terms of the model operators $\sT_m$ 
studied in Section \ref{M-F}.  In order to show this, we set some
notation. 

\begin{defn}\label{mult-expressions}{\rm
  For $z\in\bbC$ and $j\in\bbZ$ we set 
\begin{equation*}
D(z,j)=4\cosh\big[ i\pi(z-\ts{\frac12}) \big] 
\cosh\Big[ i(2\beta-\pi)\big(z-\ts{\frac12}+i(\ts{\frac j2 +\frac14}) \big)\Big]
\end{equation*}
and
\begin{equation*}
S_{\nu_\beta}=\Big\{z\in\bbC: 
\frac{1-\nu_\beta}{2}< z<\frac{1+\nu_\beta}{2}\Big\}.
\end{equation*}

Moreover, for $k,\ell\in\{1,\ldots,4\}$ we set
\begin{equation}\label{(1)}
m_{k,\ell}(z,j)=\frac{e^{i\mu(z-\frac 12)}e^{i\eta(z-\frac12+i\frac j2)}}{D(z,j)} 
\end{equation}
where
\begin{align*}
 &\mu=-\pi\ \text{and } \eta=-(2\beta-\pi)
 &\text{if } (k,\ell)=(1,1);\\
 &\mu=\pi\ \text{and } \eta=-(2\beta-\pi) &\text{if } (k,\ell)=(2,2);\\
 &\mu=\pi\ \text{and } \eta=(2\beta-\pi) &\text{if } (k,\ell)=(3,3);\\
 &\mu=-\pi\ \text{and } \eta=(2\beta-\pi) &\text{if } (k,\ell)=(4,4);\\
 &\mu=0\ \text{and } \eta=-(2\beta-\pi) &\text{if } (k,\ell)\in\{(2,1),(1,2)\};\\
&\mu=0\ \text{and } \eta=(2\beta-\pi) &\text{if } (k,\ell)\in\{(4,3),(3,4)\};\\
 &\mu=-\pi\ \text{and }\ \eta=0 &\text{if } (k,\ell)\in\{(4,1),(1,4)\};\\
 &\mu=\pi\ \text{and }\ \eta=0  &\text{if } (k,\ell)\in\{(3,2),(2,3)\};\\
 &\mu=0\ \text{and }\ \eta=0 &\text{if }  (k,\ell)\in\{(1,3),(3,1),(2,4), (4,2)\}.
\end{align*}
%
%
%
}
\end{defn}

Then, the following theorem holds. 
\begin{thm}\label{Pll'}
For all $k,\ell\in\{1,\ldots,4\}$ it holds that $\sP_{k,\ell}
= \sT_{m_{k,\ell}}$, where the multipliers
$m_{k,\ell}:S_{\nu_\beta}\times \bbZ:\to\bbC $ are as in
Def. \eqref{mult-expressions}.   In particular, for
$\frac{2}{1+\nu_\beta}<p<\frac{2}{1-\nu_\beta}$ the operator $\sP_{k,\ell}$, $k,\ell\in\{1,\dots,4\}$, extend to bounded operator
$$
\sP_{k,\ell} :L^p(d_b(D_\beta))\to L^p(d_b(D_\beta))\,.
$$
\ms
\end{thm}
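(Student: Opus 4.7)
The plan is to diagonalize $\sP$ via the spectral decomposition afforded by the natural group action of $\bbR^+\times\bbT$ on $D_\beta$, and then invoke Theorem \ref{rooney-thm} to get the $L^p$ bound. The action is $(\lambda,e^{i\alpha})\cdot(z_1,z_2)=(\lambda z_1,e^{i\alpha}z_2)$, which preserves $D_\beta$ and hence $\sH^2(D_\beta)$. Parametrizing each component $E_\ell$ via \eqref{El+}--\eqref{El-} by $(\rho,\theta)\in(0,\infty)\times\bbT$ and applying $\cC_2$ in $\rho$ together with Fourier series in $\theta$ on each component (after absorbing the boundary weights $e^{\pm(\beta-\pi/2)/4}$ coming from $d\sigma$), one obtains an isometric identification
\[
L^2(d_b(D_\beta))\;\cong\;L^2\big(\bbR\times\bbZ;\bbC^4\big),
\]
with spectral coordinate $(\xi_1,j)$, equivalently $(z,j)$ with $z=\tfrac12-i\xi_1$, and the $\bbC^4$-vector indexed by $E_1,\ldots,E_4$.

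Since $\sH^2$ is invariant under the group action, it decomposes fiberwise over $(z,j)$. I would show that the fiber at $(z,j)$ is the one-dimensional line spanned by the vector $v(z,j)\in\bbC^4$ whose $\ell$-th entry records the (weighted) boundary trace on $E_\ell$ of the formal generalized eigenfunction $F_{z,j}(z_1,z_2)=z_1^{z-1}z_2^j$. On $E_1$, where $\arg z_1=\beta$ and $|z_2|=e^{(\beta-\pi/2)/2}$, the trace is $e^{i\beta(z-1)}e^{j(\beta-\pi/2)/2}\rho^{z-1}e^{ij\theta}$, yielding $v_1(z,j)=e^{i\beta(z-1)}e^{j(\beta-\pi/2)/2}e^{(\beta-\pi/2)/4}$ after absorbing the boundary weight $e^{(\beta-\pi/2)/4}$; the analogous computations on $E_2,E_3,E_4$, using $\arg z_1=\beta-\pi,\,-\beta,\,-\beta+\pi$ respectively and the appropriate sign of the weight, produce $v_2,v_3,v_4$.

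Under this identification the Szeg\H o projection acts fiberwise as the orthogonal projection $v(z,j)v(z,j)^*/\|v(z,j)\|^2$ in $\bbC^4$, so its $(k,\ell)$-matrix entry equals $v_k(z,j)\overline{v_\ell(z,j)}/\|v(z,j)\|^2$. A direct computation of the numerator produces the phase $e^{i\mu_{k,\ell}(z-1/2)}e^{i\eta_{k,\ell}(z-1/2+ij/2)}$ with $(\mu_{k,\ell},\eta_{k,\ell})$ matching Definition \ref{mult-expressions}: the factor $e^{i\mu(z-1/2)}$ encodes the difference of $\arg z_1$-phases of $E_k$ and $E_\ell$ within the same side $E^\pm$, while $e^{i\eta(z-1/2+ij/2)}$ encodes the crossing between $E^+$ and $E^-$. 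Evaluating $\|v(z,j)\|^2$ by summing $|v_\ell|^2$ first over the signs of $\rho$ within each side (which produces $\cosh[i\pi(z-1/2)]$) and then across the two sides (which produces $\cosh[i(2\beta-\pi)(z-1/2+i(j/2+1/4))]$) gives exactly $D(z,j)/4$, establishing $\sP_{k,\ell}=\sT_{m_{k,\ell}}$.

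The $L^p$ bound then follows from Theorem \ref{rooney-thm} with $a=(1-\nu_\beta)/2$ and $b=(1+\nu_\beta)/2$. Hypothesis (i) is clear: $\cosh[i\pi(z-1/2)]$ vanishes only at integer $z$, which lies outside $S_{\nu_\beta}$ since $\nu_\beta<1$, and $\cosh[i(2\beta-\pi)(z-1/2+i(j/2+1/4))]$ vanishes precisely on $\partial S_{\nu_\beta}$ for every $j\in\bbZ$. Hypothesis (ii), that $m_{k,\ell,q}$ be a Fourier multiplier on $L^q(\bbR\times\bbT)$ for $a<1/q<b$, is a Marcinkiewicz-type verification on $\bbR\times\bbZ$: on the line $\Re z=1/q$ the numerator grows at most as $e^{|\mu||\xi_1|}e^{|\eta||\xi_1-j/2|}$, which is exactly balanced by the lower-bound growth of $|D|$, so $m_{k,\ell,q}$ together with its mixed derivatives and differences in $(\xi_1,j)$ remains uniformly bounded. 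The main obstacle lies in the second step: since $z_1^{z-1}z_2^j$ is not single-valued on $D_\beta$ when $\beta>\pi$ (as $\arg z_1$ ranges over an interval of length $2\beta>2\pi$), the identification of the fiber as the line $\bbC\cdot v(z,j)$ must be justified either by passing to the universal cover of $D_\beta$ and pushing back down, or by verifying directly on a dense subspace of $\sH^2$ that the four Mellin--Fourier coefficients $\{\widehat f_\ell(z,j)\}_{\ell=1}^4$ satisfy the rank-one relation encoded by $v(z,j)$.
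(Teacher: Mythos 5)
Your strategy is genuinely different from the paper's, and it contains a gap that you yourself flag but do not close. The crux of the theorem is the identity $\sP_{k,\ell}=\sT_{m_{k,\ell}}$, and in your scheme this rests entirely on the claim that, under your identification of $L^2(d_b(D_\beta))$ with $L^2(\bbR\times\bbZ;\bbC^4)$, the image of $\sH^2(d_b(D_\beta))$ decomposes fiberwise into the one-dimensional lines $\bbC\cdot v(z,j)$. This is a Paley--Wiener theorem for $\sH^2(D_\beta)$: one must show both that every $f\in\sH^2$ has Mellin--Fourier boundary data proportional to $v(z,j)$ for a.e.\ $(z,j)$, \emph{and} that every $L^2$ section of the field of lines $\bbC\cdot v(z,j)$ arises as boundary data of an $\sH^2$ function --- without the second half, the rank-one formula $vv^*/\|v\|^2$ is the projection onto a space that might differ from $\sH^2(d_b(D_\beta))$. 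You propose two ways to establish this (universal cover, or direct verification on a dense subspace) but carry out neither; and the multivaluedness of $z_1^{z-1}$ you mention is only part of the difficulty, since the branch of $\log z_1$ adapted to $D_\beta$ depends on $|z_2|$, so even writing down the generalized eigenfunctions and their traces on the $E_\ell$ correctly already forces one into the coordinates of $D'_\beta$. As it stands, your argument shows only that \emph{if} the fiberwise rank-one description holds, \emph{then} the matrix entries are the $m_{k,\ell}$ of Definition \ref{mult-expressions}.

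The paper closes exactly this gap by a different mechanism and never analyzes $\sH^2(D_\beta)$ spectrally at all: it uses the biholomorphism $\vp(z_1,z_2)=(e^{z_1},z_2)$ and the weighted composition operator $\Lambda f=\psi_p(f\circ\vp^{-1})$, proves that $\Lambda$ is a surjective isometry of Hardy spaces (Proposition \ref{Isometry}), deduces the transformation rule $\sP'(\Lambda^{-1}f)=\Lambda^{-1}(\sP f)$ (Proposition \ref{transf-rule}), and imports the Fourier multiplier formula $\sP'_{k,\ell}=T_{\widetilde m_{k,\ell}}$ on $D'_\beta$ from \cite{M}; the elementary identity $({\textstyle\frac{1}{\psi_2}}\chi_\ell f)\circ\vp=\cC_2(\chi_\ell f)$ is what converts $T_{\widetilde m_{k,\ell}}$ into the Mellin--Fourier operator $\sT_{m_{k,\ell}}$. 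Your verification of the hypotheses of Theorem \ref{rooney-thm} (holomorphy and boundedness of $m_{k,\ell}(\cdot,j)$ on closed substrips of $S_{\nu_\beta}$, and a Mihlin/Marcinkiewicz bound on each line $\Re z=1/q$ after transference and an affine change of variables) is essentially the same as the paper's and is sound. To complete your route you would have to prove the Paley--Wiener theorem for $\sH^2(D_\beta)$ directly; the shorter path is the paper's, namely to transfer to $D'_\beta$, where the corresponding structure theorem is already available.
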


For the reader's convenience, we prove here only that the multipliers
$m_{k,\ell}$ satisfy the hypothesis of Thm. \ref{rooney-thm}, whereas
we postpone to Section \ref{xxx} the proof that the operators
$\sP_{k,\ell}$ actually are Mellin-Fourier multiplier operators.  

\ms

\begin{proof}
It is immediate to verify that the multiplier $m_{k,\ell}$ satisfies hypothesis $(i)$ in Theorem \ref{rooney-thm}  for every $k,\ell=1,\ldots,4$. Concerning hypothesis $(ii)$, we proceed as follows. Let be $\sigma$ such that $(1-\nu_\beta)/2<\sigma<(1+\nu_\beta)/2$. Then, we want to prove that the function $m_{k,\ell}(\sigma+i\cdot,\cdot): \bbR\times\bbZ\to \bbC$ is a Fourier multiplier on $L^p(\bbR\times\bbT)$. This is obtained by transference (see, e.g., \cite[Chapter 3]{MR2445437} from the fact that the extended function $m_{k,\ell}(\sigma+i\cdot,\cdot):\bbR^2\to\bbC$,
$$
m_{k,\ell}(\sigma+i\xi,\lambda)=\frac{e^{i\mu(\sigma-\frac12+i\xi)}e^{i\eta(\sigma-\frac12+i(\xi+\lambda))}}{4\cosh(i\pi(\sigma-\frac12+i\xi))\cosh(i(2\beta-\pi)(\sigma-\frac12+i(\xi+\lambda+\frac14))}
$$
is a Fourier multiplier on $L^p(\bbR^2)$. This last affirmation is easily proved by showing that the composition of $m_{k,\ell}(\sigma+i\xi,\lambda)$ with the affine change of variables
\begin{equation*}
\begin{cases}
\xi'= \pi\xi \cr
\lambda'=(2\beta-\pi)(\xi+\lambda+1/4)\,,
\end{cases}
\end{equation*} 
is a Mihlin-H\"{o}rmander multiplier (see, e.g, \cite[Chapter 5]{MR2445437}).
This concludes the first part of the proof of Theorem \ref{Pll'}.
\end{proof}

We observe that  Theorem \ref{Pll'} immediately gives proves the positive results in Theorem \ref{t:LpBounds}.  In the next
section we prove the negative results for $\sP$ in both $L^p$ and
Sobolev scales. \ms

\section{$L^p$ and Sobolev irregularity}\label{irregularity}

In this section we provide an explicit counterexample to prove the
negative result of Theorems \ref{t:LpBounds} and \ref{SobolevL2}.

Let $\chi_1$ be the characteristic function of $E_1\subseteq d_b(D_\beta)$ defined in \eqref{El+}. Then, we define on $d_b(D_\beta)$ the function
\begin{equation}\label{function-g}
 g(z_1,z_2):= \chi_1(z_1,z_2)e^{4\beta(\beta+i\log |z_1|)}e^{-(\log |z_1|)^2-\frac12\log |z_1|}. 
\end{equation}
The following lemma is elementary, therefore we omit the proof.
\begin{lem}\label{G-Lp-Sobolev}
The function $g$ belongs to $L^p(d_b(D_\beta))$ and $W^{s,p}(d_b(D_\beta))$ for every $p\in(1,\infty)$ and positive integer $s$.
\end{lem}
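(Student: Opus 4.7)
The approach is to exploit the super-exponential decay of the Gaussian factor $e^{-(\log|z_1|)^2}$, which dominates every other quantity that appears.

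First, I would observe that $g$ is supported on $E_1$ and, under the natural parametrization $(\rho,\theta)\mapsto (\rho e^{i\beta}, e^{\frac{1}{2}(\beta-\pi/2)}e^{i\theta})$, depends only on $\rho=|z_1|$. Since the factor $e^{4i\beta\log|z_1|}$ is unimodular, one has $|g(z_1,z_2)|=e^{4\beta^2}\chi_1\cdot e^{-(\log\rho)^2-\frac12\log\rho}$, and by \eqref{GrowthD} the measure $d\sigma$ on $E_1$ is a constant multiple of $dr\,d\theta$. Substituting $x=\log\rho$ reduces $\int_{E_1}|g|^p\,d\sigma$ to a constant multiple of the Gaussian integral $\int_\bbR e^{-px^2+(1-p/2)x}\,dx$, which converges for every $p\in(1,\infty)$; this settles the $L^p$ claim.

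For $W^{s,p}$ with $s$ a positive integer my preferred route is to pull $g$ back to $\bbR\times\bbT$ via the isometry $\cC_p$ of \eqref{def-MF-multiplier} applied in the radial variable. The result is
$$
\cC_p(g|_{E_1})(x,\theta)=e^{4\beta^2}\exp\!\bigl(-x^2+(4i\beta+\tfrac{1}{p}-\tfrac{1}{2})x\bigr),
$$
a Gaussian modulated by a plane wave and constant in $\theta$. This is a Schwartz function on $\bbR\times\bbT$, hence lies in $W^{s,p}$ for every $s$ and every $p$. An equally elementary alternative is direct differentiation: each $\partial_\rho^{k}$ applied to $g$ produces a factor $\rho^{-k}P_k(\log\rho)$ with $P_k$ a polynomial, so after the same log-substitution the $L^p$-norm of any derivative is again a Gaussian integral with polynomial weight and is therefore finite.

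There is no genuine obstacle, which is why the authors call the lemma elementary; the one item a cautious reader might want to verify is that extending $g$ by zero outside $E_1$ introduces no singularity. This is immediate from the super-exponential vanishing of $g$ and all its derivatives as $\rho\to 0^+$ (again thanks to the $e^{-(\log\rho)^2}$ factor), together with the fact that $E_1$ meets the remaining pieces $E_2,E_3,E_4$ only on the $d\sigma$-null sets $E_0^{\pm}$.
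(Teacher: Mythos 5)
Your proof is correct, and in fact the paper offers no proof at all (the authors simply declare the lemma elementary and omit it), so there is nothing to compare against except the definitions. Your $L^p$ computation and your direct-differentiation argument for the Sobolev part are exactly the verification the authors had in mind. One caveat on your two ``equally elementary'' routes for the $W^{s,p}$ claim: they are not interchangeable, because the paper's Sobolev norm on $E_1$ is taken in the \emph{linear} radial variable $\rho$, with $g$ extended by zero to $\bbR\times\bbT$ --- this is forced by the authors' remark that $dx\,d\theta$ and $d\sigma$ agree up to a constant (which would be false for $x=\log\rho$), and by the Gagliardo-seminorm computation over $(0,\infty)^2$ later in Section \ref{irregularity}. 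Since $\cC_p$ is an isometry of $L^p$ but not of $W^{s,p}$, showing that $\cC_p(g|_{E_1})$ is Schwartz in $x=\log\rho$ does not by itself give membership in the space the paper actually uses; it is your second route --- $\partial_\rho^k g=\rho^{-k}P_k(\log\rho)\cdot e^{(4i\beta-\frac12)\log\rho-(\log\rho)^2}$ together with the observation that all derivatives vanish to infinite order as $\rho\to0^+$, so the zero-extension across $E_0^+$ is smooth and all derivatives up to order $s$ lie in $L^p$ --- that proves the lemma as stated. Since you supply that route and correctly flag the only delicate point, the proof stands.
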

Next we focus on the Szeg\H{o} projection of $g$. From \eqref{P} we deduce that $\sP g=\sum_{\ell=1}^4 \sP_{\ell,1}g.$
We want to explicitly compute $\sP_{1,1}g$, using Theorem \ref{Pll'}. 
We have that
\begin{align*}
\cC_2 g(x e^{i\beta},e^{\frac12(\beta-\pt)}e^{ i\theta})&=
e^{4\beta^2} e^{-x^2+i4\beta x } \,,
\end{align*}
so that
\begin{align*}
 \cF \cC_2 g(\xi,j)&= \cF_1\cC_2 g(\xi)\\
 &=e^{4\beta^2}\int_{\bbR} e^{i4\beta x} e^{-x^2} e^{-ix\xi}\ dx	\\
  &=C_\beta e^{-\frac{(\xi-4\beta)^2}{4}} \,,
\end{align*}
where $C_\beta$ is a positive constant depending on $\beta$.
In conclusion,  we obtain that, up to a multiplicative positive constant,
\begin{align}\label{SG}
\begin{split}
 \sP_{1,1}g(x):=\sP_{1,1}g(x e^{i\beta},
 e^{\frac12(\beta-\pt)}e^{i\theta})
&=\cC^{-1}_2\cF^{-1}\bigg[\frac{ e^{-\frac{(\cdot)^2}{4}}}{\cosh(\pi\cdot)\cosh((2\beta-\pi)\cdot)}\bigg] (x).
\end{split}
\end{align}
We recall once again that, by definition, $\sP_{1,1}g$ is supported on
$E_1$; hence $x$ is always assumed to be positive. 

If we define the functions
\begin{align*}
  &f(\xi)= e^{-\frac{\xi^2}{4}};\qquad h(\xi)= \frac{1}{\cosh(\pi\xi)};\qquad r(\xi)= \frac{1}{\cosh((2\beta-\pi)\xi)},
\end{align*}
then,  it is easy to prove that, 
\begin{align*}
  &\cF^{-1}f(x)=\frac{e^{-x^2}}{\sqrt{\pi}};\quad 
\cF^{-1} h(x)=\frac{1}{2\pi}\frac{1}{\cosh(\frac{x}{2})};
\quad \cF^{-1} r(x)=\frac{\nu_\beta}{2\pi}\frac{1}{\cosh(\frac{\nu_\beta x}{2})}
\end{align*}
where $\nu_\beta=\frac{\pi}{2\beta-\pi}$.

Hence, 
\begin{equation}\label{etachimu}
\cF^{-1}
\bigg[\frac{e^{-\frac{(\cdot)^2}{4}}}{\cosh(\pi\cdot)\cosh((2\beta-\pi)\cdot)}\bigg](x)
=C_\beta \int_\bbR\int_\bbR\frac{e^{-s^2}}{\cosh(\frac{\nu_\beta(x-t)}{2})\cosh(\frac{t-s}{2})}\ dsdt.
\end{equation}
We now prove two lemmas.
\begin{lem}\label{decay-T11-infty}
There exist two positive constants $A$ and $B$ such that
\begin{align*}
  A x^{-\frac{\nu_\beta+1}{2}}\leq \sP_{1,1}g(x)\leq B x^{-\frac{\nu_\beta+1}{2}}
\end{align*}
for every $x\in(1,\infty)$.
\end{lem}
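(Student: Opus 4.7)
The plan is to read off from \eqref{SG} that $\sP_{1,1}g(x)=x^{-1/2}F(\log x)$, where
\[
F(y)=\cF^{-1}\!\left[\frac{e^{-\xi^2/4}}{\cosh(\pi\xi)\cosh((2\beta-\pi)\xi)}\right]\!(y),
\]
and reduce the lemma to the two-sided bound $F(y)\asymp e^{-\nu_\beta y/2}$ for $y\ge 0$. Substituting $y=\log x$ then yields $\sP_{1,1}g(x)\asymp x^{-(1+\nu_\beta)/2}$ on $[1,\infty)$, which is the claim.

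For the asymptotic at large $y$ I would use a contour shift. Set $\Phi(\xi):=e^{-\xi^2/4}/[\cosh(\pi\xi)\cosh((2\beta-\pi)\xi)]$; this function is meromorphic on $\bbC$ with simple poles at $\xi=i(k+\tfrac12)$ and at $\xi=i(k+\tfrac12)\nu_\beta$, $k\in\bbZ$. Since $\nu_\beta<1$ the pole in the upper half-plane of smallest imaginary part is $\xi_0=i\nu_\beta/2$, coming from $\cosh((2\beta-\pi)\xi)$. On any horizontal line $\{\mathrm{Im}\,\xi=c\}$ the Gaussian factor is bounded, $|e^{-\xi^2/4}|=e^{(c^2-u^2)/4}$, while the product of the two hyperbolic cosines decays like $e^{-2\beta|u|}$ as $|u|\to\infty$, so $\Phi$ is integrable on any such line avoiding poles. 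Picking $c\in(\nu_\beta/2,\min(1/2,3\nu_\beta/2))$ (nonempty because $\nu_\beta<1$) and shifting the contour from $\bbR$ to $\bbR+ic$ captures only the simple pole at $\xi_0$; using $\frac{d}{d\xi}\cosh((2\beta-\pi)\xi)\big|_{\xi_0}=i(2\beta-\pi)$ the residue calculation gives
\[
F(y)=C^{*}e^{-\nu_\beta y/2}+R(y),\qquad |R(y)|\lesssim e^{-cy},
\]
with $C^{*}=e^{\nu_\beta^2/16}/[(2\beta-\pi)\cos(\pi\nu_\beta/2)]>0$ (positive because $\pi\nu_\beta/2\in(0,\pi/2)$). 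Thus there exists $Y_0$ with $F(y)\asymp e^{-\nu_\beta y/2}$ for all $y\ge Y_0$.

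For $0\le y\le Y_0$ I would invoke positivity: from the formulas listed just before \eqref{etachimu}, $F=\cF^{-1}[f]\ast\cF^{-1}[h]\ast\cF^{-1}[r]$ is a convolution of three strictly positive Schwartz functions, so $F$ is continuous and strictly positive on $\bbR$. On the compact interval $[0,Y_0]$ both $F(y)$ and $e^{-\nu_\beta y/2}$ lie between two positive constants, so the two-sided bound extends to $[0,\infty)$ and completes the proof.

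The main obstacle is a clean justification of the contour shift: the Gaussian $e^{-\xi^2/4}$ stays only \emph{bounded} (not decaying) in the imaginary direction, so integrability on the shifted horizontal line relies entirely on the $e^{-2\beta|\mathrm{Re}\,\xi|}$ decay supplied by the two $\cosh$ factors, uniformly for $c$ in a compact subinterval of $(\nu_\beta/2,\min(1/2,3\nu_\beta/2))$. A small but essential secondary point is the strict positivity of $C^{*}$, on which the lower bound depends; this is ensured by the standing assumption $\beta>\pi$, equivalently $\nu_\beta<1$.
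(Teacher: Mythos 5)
Your proposal is correct, but it follows a genuinely different route from the paper. The paper works directly with the double-integral representation \eqref{etachimu}: it multiplies $\sP_{1,1}g(x)$ by $x^{(1+\nu_\beta)/2}$, absorbs the factor $x^{\nu_\beta/2}$ into the hyperbolic cosine in the $t$-variable to get the integrand $\tfrac{2}{e^{-\nu_\beta t/2}+e^{-\nu_\beta\log x}e^{\nu_\beta t/2}}\cdot\tfrac{e^{-s^2}}{\cosh((t-s)/2)}$, and then uses the monotonicity in $x$ of the denominator (for $x>1$ one has $0<e^{-\nu_\beta\log x}<1$) to sandwich the expression between two convergent integrals independent of $x$; convergence of the upper bound uses only $\nu_\beta<1$. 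This is entirely elementary and, as the paper notes, produces the \emph{same} constants $A,B$ as in Lemma \ref{decay-T11-zero}, a fact exploited later in the Sobolev irregularity argument. Your route instead extracts the precise asymptotics $F(y)\sim C^{*}e^{-\nu_\beta y/2}$ by shifting the contour past the lowest pole $i\nu_\beta/2$ of $1/\cosh((2\beta-\pi)\xi)$, and handles the compact range $0\le y\le Y_0$ by strict positivity of $F$ (visible either as a triple convolution of positive kernels or directly from \eqref{etachimu}). Your residue and the positivity of $C^{*}$ check out, the choice $c\in(\nu_\beta/2,\min(1/2,3\nu_\beta/2))$ correctly isolates a single simple pole, and integrability on the shifted line is in fact even easier than you suggest, since $|e^{-\xi^2/4}|=e^{(c^2-u^2)/4}$ decays rapidly in $u=\Re\xi$ rather than merely staying bounded. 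What your approach buys is a sharper statement (a genuine asymptotic constant rather than a two-sided bound); what it costs is more machinery and the loss of the matching constants shared with the $x\in(0,1)$ regime.
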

\begin{proof}
 From \eqref{SG} and \eqref{etachimu} we get
 \begin{align*}
   x^{\frac{1+\nu_\beta}{2}}\sP_{1,1}g(x)
&=C_\beta x^{\frac{\nu_\beta}{2}}\int_\bbR\int_\bbR \frac{1}{\cosh(\frac{\nu_\beta(\log x-t)}{2})}\frac{e^{-s^2}}{\cosh(\frac{t-s}{2})}\ dsdt\\
   &=C_\beta \int_\bbR\int_\bbR \frac{2}{e^{-\frac{\nu_\beta t}{2}}+e^{-\nu_\beta \log x}e^{\frac{\nu_\beta t}{2}}}\frac{e^{-s^2}}{\cosh(\frac{t-s}{2})} \ ds dt.
 \end{align*}
Now, since $x>1$,
\begin{align*}
 C_\beta \int_\bbR\int_\bbR \frac{2}{e^{-\frac{\nu_\beta t}{2}}+e^{-\nu_\beta \log x}e^{\frac{\nu_\beta t}{2}}}\frac{e^{-s^2}}{\cosh(\frac{t-s}{2})} \ ds dt &\geq C_\beta \int_\bbR\int_\bbR \frac{1}{\cosh(\frac{\nu_\beta t}{2})}\frac{e^{-s^2}}{\cosh(\frac{t-s}{2})}\ ds dt\\
 &=: A.
\end{align*}
Similarly,
\begin{align*}
 C_\beta \int_\bbR\int_\bbR \frac{2}{e^{-\frac{\nu_\beta t}{2}}+e^{-\nu_\beta \log x}e^{\frac{\nu_\beta t}{2}}}\frac{e^{-s^2}}{\cosh(\frac{t-s}{2})} \ ds dt &\leq C_\beta \int_\bbR\int_\bbR \frac{2}{e^{-\frac{\nu_\beta t}{2}}}\frac{e^{-s^2}}{\cosh(\frac{t-s}{2})}\ dsdt\\
 &=: B.
\end{align*}
Thus, the lemma is proved.
\end{proof}
\begin{lem}\label{decay-T11-zero}
 There exist two positive constants $A$ and $B$ such that
 $$
 A x^{\frac{\nu_\beta-1}{2}}\leq \sP_{1,1}g(x)\leq B x^{\frac{\nu_\beta-1}{2}}
 $$
 for every $x\in(0,1)$.
\end{lem}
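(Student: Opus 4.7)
The plan is to mirror the proof of Lemma \ref{decay-T11-infty}, now extracting the asymptotic behavior as $x \to 0^+$ rather than $x \to +\infty$. The target growth rate $x^{(\nu_\beta-1)/2}$ is sub-integrable since $\nu_\beta<1$, which suggests that the small-$x$ regime will be controlled by the same $\cosh$-reciprocal machinery that governed large $x$, but with the roles of the two exponentials swapped.

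Starting from \eqref{SG} together with \eqref{etachimu}, and using $\cC_2^{-1}\psi(x) = x^{-1/2}\psi(\log x)$, I would write
\begin{equation*}
\sP_{1,1}g(x) = C_\beta\, x^{-1/2}\!\iint_{\bbR^2}\!\frac{e^{-s^2}}{\cosh\!\bigl(\tfrac{\nu_\beta(\log x-t)}{2}\bigr)\cosh\!\bigl(\tfrac{t-s}{2}\bigr)}\,ds\,dt.
\end{equation*}
Multiplying by $x^{(1-\nu_\beta)/2}$ and expanding $\cosh\!\bigl(\tfrac{\nu_\beta(\log x-t)}{2}\bigr)$ in exponentials, as in the preceding lemma, yields
\begin{equation*}
x^{(1-\nu_\beta)/2}\sP_{1,1}g(x) = C_\beta\!\iint_{\bbR^2}\!\frac{2}{x^{\nu_\beta}e^{-\nu_\beta t/2}+e^{\nu_\beta t/2}}\cdot\frac{e^{-s^2}}{\cosh\!\bigl(\tfrac{t-s}{2}\bigr)}\,ds\,dt.
\end{equation*}

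For $x\in(0,1)$ we have $0<x^{\nu_\beta}\le 1$. The \emph{lower} bound comes from the estimate $x^{\nu_\beta}e^{-\nu_\beta t/2}+e^{\nu_\beta t/2}\le 2\cosh(\nu_\beta t/2)$, giving a uniform-in-$x$ positive minorant
\begin{equation*}
A = C_\beta\!\iint_{\bbR^2}\!\frac{1}{\cosh(\nu_\beta t/2)}\cdot\frac{e^{-s^2}}{\cosh\!\bigl(\tfrac{t-s}{2}\bigr)}\,ds\,dt,
\end{equation*}
which is clearly finite and strictly positive. The \emph{upper} bound comes from $x^{\nu_\beta}e^{-\nu_\beta t/2}+e^{\nu_\beta t/2}\ge e^{\nu_\beta t/2}$, giving
\begin{equation*}
B = 2C_\beta\!\iint_{\bbR^2}\!\frac{e^{-\nu_\beta t/2}\,e^{-s^2}}{\cosh\!\bigl(\tfrac{t-s}{2}\bigr)}\,ds\,dt.
\end{equation*}

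The one non-trivial point is verifying $B<\infty$, since $e^{-\nu_\beta t/2}$ blows up as $t\to-\infty$. This will be the main (minor) obstacle; I would handle it by the change of variables $u=t-s$, which decouples the integral into
\begin{equation*}
\Bigl(\int_{\bbR} e^{-\nu_\beta s/2}\,e^{-s^2}\,ds\Bigr)\Bigl(\int_{\bbR} \frac{e^{-\nu_\beta u/2}}{\cosh(u/2)}\,du\Bigr).
\end{equation*}
The first factor is a shifted Gaussian and clearly finite; the second converges at $u\to-\infty$ (integrand $\sim 2e^{(1-\nu_\beta)u/2}$) precisely because $\nu_\beta<1$, which is guaranteed by $\beta>\pi$. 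This finishes the two-sided estimate on $(0,1)$ and completes the lemma.
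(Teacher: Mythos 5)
Your proof is correct and follows essentially the same route as the paper: rewrite $x^{(1-\nu_\beta)/2}\sP_{1,1}g(x)$ via \eqref{SG} and \eqref{etachimu}, then bound the denominator $x^{\nu_\beta}e^{-\nu_\beta t/2}+e^{\nu_\beta t/2}$ above by $2\cosh(\nu_\beta t/2)$ and below by $e^{\nu_\beta t/2}$. Your extra step checking $B<\infty$ via the substitution $u=t-s$ is a welcome detail that the paper leaves implicit.
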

\begin{proof}
 From \eqref{SG} and \eqref{etachimu} we obtain
 \begin{align*}
  x^{\frac{1-\nu_\beta}{2}}\sP_{1,1}g(x)= C_\beta \int_\bbR\int_\bbR\frac{2}{e^{\nu_\beta\log x}e^{-\frac{\nu_\beta t}{2}}+e^{\frac{\nu_\beta t}{2}}} \frac{e^{-s^2}}{\cosh(\frac{t-s}{2})}\ dsdt.
 \end{align*}
 Now, since $x\in(0,1)$,
\begin{align*}
 C_\beta \int_\bbR\int_\bbR\frac{2}{e^{\nu_\beta\log x}e^{-\frac{\nu_\beta t}{2}}+e^{\frac{\nu_\beta t}{2}}} \frac{e^{-s^2}}{\cosh(\frac{t-s}{2})}\ dsdt&\geq C_\beta \int_\bbR\int_\bbR \frac{1}{\cosh(\frac{\nu_\beta t}{2})}\frac{e^{-s^2}}{\cosh(\frac{t-s}{2})}\ dsdt\\
 &=: A.
\end{align*}
Similarly,
\begin{align*}
 C_\beta \int_\bbR\int_\bbR\frac{2}{e^{\nu_\beta\log x}e^{-\frac{\nu_\beta t}{2}}+e^{\frac{\nu_\beta t}{2}}} \frac{e^{-s^2}}{\cosh(\frac{t-s}{2})}\ dsdt&\leq C_\beta \int_\bbR\int_\bbR \frac{2}{e^{\frac{\nu_\beta t}{2}}}\frac{e^{-s^2}}{\cosh(\frac{t-s}{2})}\ dsdt\\
 &=:B
\end{align*}
and the proof is concluded.
\end{proof}
We point out that the constants $A$ and $B$ in Lemma \ref{decay-T11-infty} and Lemma \ref{decay-T11-zero} are the same. 

The necessary condition in Theorem \ref{t:LpBounds} is immediately deduced.

\begin{proof}[Necessary condition of Theorem \ref{t:LpBounds}]
 It follows easily from Lemmas \ref{decay-T11-infty} and \ref{decay-T11-zero} that $\sP_{1,1}g$ is not in $L^p$ unless $\frac{2}{1+\nu_\beta}<p<\frac{2}{1-\nu_\beta}$. Hence, it follows from Lemma \ref{G-Lp-Sobolev} that the operator $\sP$ cannot be bounded on $L^p$ unless $\frac{2}{1+\nu_\beta}<p<\frac{2}{1-\nu_\beta}$ and the proof is concluded.
\end{proof}

\subsection{The $W^{s,2}$ irregularity}\label{SobolevL2-irregularity}
As mentioned in the introduction, the fractional Sobolev spaces we consider are the ones defined via Fourier transform and Bessel potentials. Namely, the space $W^{s,p}(b_d(D_\beta))$ is defined by the norm
$$
\|f\|^p_{W^{s,p}(d_b(D_\beta))}=\sum_{\ell=1}^4\|f\|^p_{W^{s,p}(E_\ell)}
$$
where 
$$
\|f\|^p_{W^{s,p}(E_\ell)}=\|f\|^p_{W^{s,p}(\bbR\times\bbT)}=\int_{\bbR\times\bbT}\big|\cF^{-1}\big[[1+(\cdot)^2+(\cdot)^2]^{\frac{s}{2}}\cF f(\cdot,\cdot)\big](x,\theta)\big|^p\ dxd\theta.
$$
We point out that in the definition of the Sobolev norm we are integrating with respect to the Lebesgue measure $dxd\theta$ and not with respect to $d\sigma$, the induced measure on $E_\ell$. It is easy to verify that, up to a multiplicative positive constant, these two measure coincides.

Obviously, a different definition of fractional Sobolev spaces can be used, but, due to the nature of the operator $\sP$, this definition is the most natural one for our setting.

We now focus on the Hilbert space $W^{s,2}(d_b(D_\beta))$. In particular, we want to estimate the $W^{s,2}$ of $\sP g$ where $g$ is the function \eqref{function-g} once again. Notice that, since the function $g$ does not depend on the periodic variable on its support, it holds
\begin{equation}\label{Bessel}
\|g\|_{W^{s,2}(\bbR\times\bbT)}^2=\int_{\bbR}\left| \cF^{-1} \big[[1+(\cdot)^2]^{\frac{s}{2}}\cF g(\cdot)\big](x)\right|^2 dx=\|g\|^2_{W^{s,2}(\bbR)},
\end{equation}
An equivalent norm on $W^{s,2}(\bbR)$ is given by
\begin{equation}\label{Gagliardo}
\|g\|^2_{s,2}=\|g\|^2_{W^{[s],2}}+[g]_{s,2}
\end{equation}
where $[s]$ denotes the integer part of $s$, $\|g\|_{W^{[s],2}}$ is the classical Sobolev norm of integer order $[s]$ and $[g]_{s,2}$ is the so-called Gagliardo seminorm, that is,
$$
[g]_{s,2}=\int_\bbR\int_\bbR \frac{|D^{[s]}g(x)-D^{[s]}g(y)|^2}{|x-y|^{1+2s}}\ dxdy.
$$
 
For the equivalence between the norms \eqref{Bessel} and \eqref{Gagliardo} we refer the reader to  \cite{MR781540} where the spaces identified by these two norms are seen as special cases of the more general Triebel spaces. 

We are ready to prove our result. 

\proof[Necessary condition of Theorem \ref{SobolevL2}] Let us assume for the moment that $s\in(0,1)$. Because of \eqref{P} it holds
\begin{align*}
 \int_{d_b(D_\beta)}\int_{d_b(D_\beta)}\frac{|\sP g(x)-\sP g(y)|^2}{|x-y|^{1+2s}}\ dxdy\geq \int_0^\infty\int_0^\infty \frac{|\sP_{1,1}g(x)-\sP_{1,1}g(y)|^2}{|x-y|^{1+2s}}\ dxdy.
\end{align*}
 Suppose now that $x$ and $y$ satisfies $1<y<\alpha x$ where $\alpha$ is a positive number to be fixed. Then, it follows from Lemma \ref{decay-T11-infty} that
 \begin{align*}
  |\sP_{1,1}g(x)-\sP_{1,1}g(y)|&\geq \sP_{1,1}g(y)-\sP_{1,1}g(x)\\
  &\geq A y^{-\frac{1+\nu_\beta}{2}}- B x^{-\frac{1+\nu_\beta}{2}}\\
  &\geq A y^{-\frac{1+\nu_\beta}{2}}- B\big(\frac{ y}{\alpha}\big)^{-\frac{1+\nu_\beta}{2}}\\
  &=y^{-\frac{1+\nu_\beta}{2}}[A-\alpha^{\frac{1+\nu_\beta}{2}} B]\\
  &\geq \frac{A}{2} y^{-\frac{1+\nu_\beta}{2}}
 \end{align*}
if $\alpha$ is chosen such that $\alpha<\big(\frac{A}{2B}\big)^{\frac{2}{1+\nu_\beta}}$. In particular, this implies that $\alpha<\frac{1}{2}$ since $B>A$ and $\frac{1+\nu_\beta}{2}<1$.
 Therefore,
 \begin{align*}
  \int_{0}^\infty\int_{0}^\infty \frac{|\sP_{1,1}g(x)-\sP_{1,1}g(y)|^2}{|x-y|^{1+2s}}\!\!\!&\ dxdy\geq \int_{\{1<y<\alpha x\}}\frac{|\sP_{1,1}g(x)-\sP_{1,1}g(y)|^2}{|x-y|^{1+2s}} \ dxdy\\
  &=\int_0^\infty \int_0^{\arctan\alpha} \frac{|\sP_{1,1}g(\frac{1}{\alpha}+\rho\cos\theta)-\sP_{1,1}g(1+\rho\sin\theta)|^2}{|\frac{1}{\alpha}+\rho\cos\theta-1-\rho\sin\theta|^{1+2s}}\ \rho d\theta d\rho\\
  &\geq \frac{A^2}{4}\int_0^\infty\int_0^{\arctan \alpha}\frac{|(1+\rho\sin\theta)^{-\frac{1+\nu_\beta}{2}}|^2}{|\frac{1}{\alpha}-1+\rho(\cos\theta-\sin\theta)|^{1+2s}}\rho d\theta d\rho.
 \end{align*}
 Notice that the integration in $d\theta$ of this last integral is finite for every $\rho>0$ since $\alpha<\frac{1}{2}$. Thus, we only have to discuss the integration in $d\rho$. It holds,
 \begin{align*}
  \int_{0}^\infty\int_{0}^\infty\frac{|\sP_{1,1}g(x)-\sP_{1,1}g(y)|^2}{|x-y|^{1+2s}}\ dxdy&\geq \frac{A^2}{4}\int_0^\infty\int_0^{\arctan \alpha}\frac{|(1+\rho\sin\theta)^{-\frac{1+\nu_\beta}{2}}|^2}{|\frac{1}{\alpha}-1+\rho(\cos\theta-\sin\theta)|^{1+2s}}\rho d\theta d\rho\\
  &\geq\frac{A^2}{4}\int_0^\infty\int_0^{\arctan \alpha}\frac{|(1+\rho)^{-\frac{1+\nu_\beta}{2}}|^{2}}{|\frac{1}{\alpha}-1+2\rho|^{1+2s}}\rho d\theta d\rho.
 \end{align*}
This integral is finite if and only if $s>-\frac{\nu_\beta}{2}$ and this condition is trivially satisfied since we are assuming $s$ to be positive.

Suppose now that $x$ and $y$ satisfies $1>y>\frac{x}{\alpha}$. Then, from Lemma \ref{decay-T11-zero}, we obtain
\begin{align*}
 |\sP_{1,1}g(x)-\sP_{1,1}g(y)|&\geq \sP_{1,1}g(y)-\sP_{1,1}g(x)\\
 &\geq A y^{\frac{\nu_\beta-1}{2}}-B x^{\frac{\nu_\beta-1}{2}}\\
 &\geq y^{\frac{\nu_\beta-1}{2}}\big[A-\alpha^{\frac{\nu_\beta-1}{2}}B\big]\\
 &\geq \frac{A}{2}y^{\frac{\nu_\beta-1}{2}}
\end{align*}
if $\alpha$ chosen that $\frac{1}{\alpha}<\big(\frac{A}{2B}\big)^{\frac{2}{1-\nu_\beta}}$. Since $B>A$, it follows $\alpha>2$. Therefore,
\begin{align*}
 \int_0^\infty\int_0^\infty \frac{|\sP_{1,1}g(x)-\sP_{1,1}g(y)|^2}{|x-y|^{1+2s}}\ dxdy&\geq \int_{\{\frac{x}{\alpha}<y<1\}}\frac{|\sP_{1,1}g(x)-\sP_{1,1}g(y)|^2}{|x-y|^{1+2s}}\ dxdy\\
 &\geq \int_0^{\frac{1}{2}}\int_{\arctan\frac{1}{\alpha}}^{\pt}\frac{|\sP_{1,1}g(\rho\cos\theta)-\sP_{1,1}g(\rho\sin\theta)|^2}{\rho^{1+2s}|\cos\theta-\sin\theta|} \rho d\theta d\rho\\
 &\geq \frac{A^2}{4}\int_0^{\frac{1}{2}}\int_{\arctan\frac{1}{\alpha}}^{\pt}\frac{\rho^{\nu_\beta-1}(\sin\theta)^{\nu_\beta-1}}{\rho^{sp}|\cos\theta-\sin\theta|^{1+sp}}d\theta d\rho.
\end{align*}
Notice that the integration in $d\theta$ is finite for every $\rho>0$ since $\frac{1}{\alpha}>2$. Instead, the integration in $d\rho$ is finite if and only if $s<\frac{\nu_\beta}{2}$.

Thus, we showed that for $s\in(0,1)$ the Gagliardo seminorm of  $\sP g$ is finite if and only if $0\leq s<\frac{\nu_\beta}{2}$. Hence, because of the equivalence between \eqref{Bessel} and \eqref{Gagliardo}, it holds that $\sP g$ is in $W^{s,2}(\bbR\times\bbT)$ if and only if $0\leq s<\frac{\nu_\beta}{2}$.  This fact, together with Lemma \ref{G-Lp-Sobolev}, proves the necessary condition of Theorem \ref{SobolevL2} in the case $s\in(0,1)$.

Suppose now that $s\geq 1$. If $\sP$ would be bounded on
$W^{s,2}(\bbR\times\bbT)$, then it would be bounded on
$W^{s_1,2}(\bbR\times\bbT)$ as well for every $s_1\in[0,s]$ by
interpolation between $L^2$ and $W^{s,2}$. This is a
contradiction. Hence, we obtain that $\sP$ is an unbounded operator on
$W^{s,2}(\bbR\times\bbT)$ for every $s\geq\frac{\nu_\beta}{2}$ and the
proof is concluded. 
\qed

\subsection{The $W^{s,p}$ irregularity}
By the very same argument we used to prove the $W^{s,2}$ irregularity $\sP$ we prove Theorem \ref{SobolevLp}.

Unlike in the Hilbert setting, we do not have on $W^{s,p}(\bbR)$ an equivalent norm in terms of Gagliardo seminorm similarly to \eqref{Gagliardo}. Nonetheless, there are results which link Bessel potential spaces with the Gagliardo seminorm. 

For $s>0$ and $p\in(1,\infty)$ define the function space 
 \begin{equation*}
 \cW^{s,p}(\bbR)=\left\{f\in \cW^{[s],p}(\bbR) : [f]_{s,p}:=\int_{\bbR}\int_{\bbR}\frac{|D^{[s]}f(x)-D^{[s]}f(y)|^p}{|x-y|^{1+sp}}<\infty\right\},
 \end{equation*}
 where $[s]$ denotes the integer part of $s$, $D^{[s]}f$ is the $[s]$-derivative of the function $f$ and $\cW^{[s],p}(\bbR)$ is the classical Sobolev space of integer order $[s]$. The space $\cW^{s,p}(\bbR)$ is endowed with the norm
 $$
 \| f\|_{\cW^{s,p}(\bbR)}^p=\|f\|_{\cW^{[s].p}(\bbR)}^p+[f]_{s,p}.
 $$
 
As in the Hilbert setting, the spaces $W^{s,p}(\bbR)$  and $\cW^{s,p}(\bbR)$ can be seen as special cases of Triebel spaces \cite{MR781540}.
 
As we mentioned, unless $p=2$, the spaces $W^{s,p}(\bbR)$ and $\cW^{s,p}(\bbR)$ do not coincide; nonetheless, the following proposition holds.
 
 \begin{prop}\label{InclusionTriebel}
 Let be $s>0$ and $p\in(1,\infty)$. Then, for every $\eps>0$, 
 \begin{equation}\label{inclusion}
 W^{s+\eps,p}(\bbR)\subseteq \cW^{s,p}(\bbR)\subseteq
 W^{s-\eps,p}(\bbR) \,,
 \end{equation}
 where $A\subseteq B$ denotes continuous inclusion. 
 
 Moreover, if $p>2$, it holds
 \begin{equation}\label{inclusion2}
 W^{s,p}(\bbR)\subseteq \cW^{s,p}(\bbR).
 \end{equation}
 \end{prop}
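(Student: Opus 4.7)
The strategy is to identify the Bessel potential space $W^{s,p}(\bbR)$ and the Gagliardo-type space $\cW^{s,p}(\bbR)$ with classical members of the Triebel--Lizorkin/Besov scale on $\bbR$, and then to invoke the well-known embedding theorems for those scales. Specifically, the Bessel-potential definition of $W^{s,p}(\bbR)$ coincides, with equivalent norms, with the Triebel--Lizorkin space $F^{s}_{p,2}(\bbR)$ for every $s\in\bbR$ and every $p\in(1,\infty)$, by the standard Littlewood--Paley characterization recorded in \cite{MR781540}. On the other hand, the $[s]$-th order Sobolev norm plus the Gagliardo seminorm applied to $D^{[s]}f$ is a Slobodeckij/Aronszajn-type norm, which (for non-integer $s$) is equivalent to the Besov norm $\|\cdot\|_{B^{s}_{p,p}(\bbR)}=\|\cdot\|_{F^{s}_{p,p}(\bbR)}$; this identification is also classical and catalogued in \cite{MR781540}.

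With these two identifications at hand, the inclusions \eqref{inclusion} follow at once from the strict monotonicity of the Triebel--Lizorkin and Besov scales with respect to the smoothness parameter: for all $s_{1}>s_{2}$ and all fine parameters $q_{1},q_{2}\in[1,\infty]$ one has $F^{s_{1}}_{p,q_{1}}(\bbR)\hookrightarrow F^{s_{2}}_{p,q_{2}}(\bbR)$, and analogously between the $F$- and $B$-scales. Taking $(s_{1},s_{2})=(s+\eps,s)$ and $(q_{1},q_{2})=(2,p)$ gives $W^{s+\eps,p}(\bbR)\hookrightarrow \cW^{s,p}(\bbR)$, and swapping the fine parameters yields $\cW^{s,p}(\bbR)\hookrightarrow W^{s-\eps,p}(\bbR)$.

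For \eqref{inclusion2} I would rely instead on the monotonicity of $F^{s}_{p,q}(\bbR)$ in the third parameter: $F^{s}_{p,q_{0}}(\bbR)\hookrightarrow F^{s}_{p,q_{1}}(\bbR)$ whenever $q_{0}\leq q_{1}$. Under the standing hypothesis $p\geq 2$ one then chooses $q_{0}=2$ and $q_{1}=p$ to conclude
$$
W^{s,p}(\bbR)=F^{s}_{p,2}(\bbR)\hookrightarrow F^{s}_{p,p}(\bbR)=\cW^{s,p}(\bbR),
$$
which is the claimed inclusion. The main technical point I anticipate is the identification $\cW^{s,p}=B^{s}_{p,p}$ at positive integer values of $s$: there $D^{[s]}f=D^{s}f$ and the weight $|x-y|^{-(1+sp)}$ in the Gagliardo integral is not the natural one for a fractional-order correction, so one must either confine \eqref{inclusion} and \eqref{inclusion2} to non-integer $s$ and pass to integer $s$ by interpolation or density, or appeal directly to Triebel's intrinsic characterization of $F^{s}_{p,p}(\bbR)$. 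All remaining steps reduce to a concatenation of standard embeddings from \cite{MR781540}.
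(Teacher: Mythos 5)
Your proof is correct and is essentially the paper's own argument: the paper proves this proposition simply by citing \cite{MR781540}, and your chain of identifications ($W^{s,p}=F^{s}_{p,2}$, $\cW^{s,p}=B^{s}_{p,p}=F^{s}_{p,p}$) together with the monotonicity of the Triebel--Lizorkin/Besov scales in the smoothness and fine parameters is exactly the standard content one extracts from that reference. The integer-$s$ caveat you flag is legitimate but moot here, since the paper only invokes the proposition for $s\in(0,1)$ and handles $s\geq1$ separately by interpolation.
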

 \begin{proof}
 See \cite{MR781540}.
 \end{proof}

 Exploiting this last proposition we prove Theorem \ref{SobolevLp}.
\proof[Proof of Theorem \ref{SobolevLp}]
   Computing the Gagliardo seminorm of $\sP g$ for $s\in(0,1)$ as in the
 Hilbert setting, we obtain that $\|\sP g\|_{\cW^{s,p}(\bbR)}<\infty$
 if and only if the stronger condition 
 $$
 -\frac{\nu_\beta}{2}<s+\frac12-\frac1p<\frac{\nu_\beta}{2}
 $$
 holds.
 Then, using \eqref{inclusion} we conclude the proof in the case
 $s\in(0,1)$. If $s\geq1$ we conclude using interpolation as in the
 Hilbert setting. 
 In we assume also $p\geq 2$ we repeat the same argument using the
 stronger inclusion \eqref{inclusion2}.  \ms
\qed

\section{Sobolev regularity}\label{Sobolev-regularity}
In this section we prove the positive result in Theorem
\ref{SobolevL2}. As in the $L^p$ setting, we deduce the boundedness of
$\sP$ from the boundedness of the operators $\sP_{k,\ell}$,
$k,\ell=1,\ldots,4$. 

To prove our result, we obtain an integral representation for the
general operator $\sP_{k,\ell}$ and then we exploit some classical
results for Calder\'{o}n--Zygmund singular integral operators \cite{MR1800316, MR2445437,MR2463316}.

From Theorem \ref{Pll'}
 we get
\begin{equation*}
 \sP_{k,\ell}
=\chi_k (\cC_2^{-1}T_{m_\mu}\cC_2)\circ(\cC_2^{-1}T_{m_\eta}\cC_2)\chi_\ell
\end{equation*}
where $T_{m_\mu}$ and $T_{m_\eta}$ are Fourier multiplier operators on
$L^2(\bbR\times\bbT)$ associated to the multipliers  
\begin{align*}
 m_\mu(\xi)=\frac{e^{\mu\xi}}{2\cosh(\pi\xi)} 
&\qquad \text{ and } \qquad m_\eta(\xi,j)
=\frac{e^{\eta(\xi-\frac j2)}}{2\cosh((2\beta-\pi)(\xi-\frac j2-\frac14)} 
\end{align*}
with $|\mu|=\pi$ or $\mu=0$ and $|\eta|=2\beta-\pi$ or $\eta=0$
according to the scheme in Definition \ref{mult-expressions}.   

Thus,
we deduce the boundedness $\sP_{k,\ell}: W^{s,2}(d_b(D_\beta))\to
W^{s,2}(d_b(D_\beta))$, from the boundedness of the operators
\begin{equation}\label{model-Pa}
\begin{split}
P_a & =\chi_+ (\cC_2^{-1}T_{m_a}\cC_2)\chi_+\\
Q_a & =\chi_+ (\cC_2^{-1}T_{M_a}\cC_2)\chi_+\\
\end{split}
\end{equation}
where $\chi_+$ denotes 
the characteristic function
of $(0,\infty)$ and, for $ |a|\geq\pi$,
\begin{equation*}
\begin{split}
m_a(\xi)& =\frac{e^{a\xi}}{2\cosh(a\xi)},\\
M_a(\xi,j) & = e^{\frac a4} m_a(\xi {\textstyle{-\frac j2 -\frac14}}
)\,.
\end{split}
\end{equation*}

\begin{remark}{\rm
It should be noticed that the operators $P_a$ and $Q_a$ involve the
multiplication by the characteristic function $\chi_+$, that obviously
destroys some smoothness.  However, we only need to consider the case
of Sobolev spaces $W^{s,2}$, with $0<s<\frac{\nu_\beta}{2}<\frac12$
and the multiplication by $\chi_+$ preserves
the Sobolev spaces for such small values of $s$, as we will see.

We also point out that the multipliers $m_a$ and $M_a$ are not models for $m_\mu$ and $m_\eta$ respectively when $\mu=\eta=0$. Nonetheless, these
cases are the easiest to deal  with and the techniques we use can be
easily adapted to these situations. 
}
\end{remark}

\begin{thm}\label{Sobolev-Pa}
 The operator $P_a$ defines a bounded operator 
$$
P_a:
 W^{s,2}(\bbR\times\bbT)\to W^{s,2}(\bbR\times\bbT)
$$
 for every  $s\in\big[0,\frac{\pi}{2|a|}\big)$. 
\end{thm}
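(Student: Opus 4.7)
The plan is to prove boundedness of $P_a$ by decomposing its integral kernel according to the singularity structure of the Fourier multiplier $m_a$, and then to extract the sharp Sobolev exponent $\pi/(2|a|)$ from the width of the strip of holomorphy of $m_a$.

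First, since $P_a$ acts trivially in the $\theta$-variable, I would reduce the claim to showing $P_a\colon W^{s,2}(\bbR)\to W^{s,2}(\bbR)$. Applying Plancherel in $\theta$ decouples the $W^{s,2}(\bbR\times\bbT)$-norm into a Fourier series of weighted $W^{s,2}$-norms in $x$, and the degree $-1$ homogeneity of the kernel of $\cC_2^{-1}T_{m_a}\cC_2$ (which follows from the Mellin-multiplier form) makes the resulting one-dimensional bound uniform in the frequency $j\in\bbZ$.

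Second, I would compute the kernel using the calculation already carried out in the proof of Theorem \ref{rooney-thm}, namely
\[
K_a(x,y)=\frac{1}{\sqrt{xy}}\,k_a\!\left(\log\tfrac{x}{y}\right), \qquad k_a=\cF^{-1}[m_a].
\]
Writing $m_a(\xi)=\tfrac12+\tfrac12\tanh(a\xi)$ and using that $m_a$ extends holomorphically to the strip $|\Im\xi|<\pi/(2|a|)$, with its nearest poles at $\xi=\pm i\pi/(2a)$, I would obtain the decomposition
\[
k_a=\tfrac12\,\delta+c_a\,\mathrm{p.v.}\tfrac{1}{t}+h_a,
\]
where $c_a\in\bbC$ and $h_a\in C^\infty(\bbR)$ satisfies, together with all its derivatives, an exponential decay bound $|h_a^{(k)}(t)|\leq C_{k,\epsilon}\exp(-(\pi/(2|a|)-\epsilon)|t|)$ for every $\epsilon>0$. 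Correspondingly, $P_a=\tfrac12\chi_++P_a^{\mathrm{HT}}+P_a^{\mathrm{sm}}$.

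For the first two summands the $W^{s,2}(\bbR)$ boundedness is essentially classical in the range $s<\tfrac12$: multiplication by $\chi_+$ preserves $W^{s,2}(\bbR)$ precisely when $s<\tfrac12$, and $P_a^{\mathrm{HT}}$ is a Calder\'on--Zygmund operator whose kernel agrees near the diagonal with (a weighted restriction of) the Hilbert transform on $(0,\infty)$, so its Sobolev boundedness in that range follows from the standard theory reviewed in \cite{MR1800316, MR2445437, MR2463316}. The hypothesis $|a|\geq\pi$ gives $\pi/(2|a|)\leq\tfrac12$, so this range contains the one claimed.

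The main obstacle is the smoothing remainder $P_a^{\mathrm{sm}}$, whose kernel $\chi_+(x)\chi_+(y)h_a(\log(x/y))/\sqrt{xy}$ is smooth on $(0,\infty)^2$ and enjoys only the decay rate $\pi/(2|a|)$ (not better). The idea is that each $x$-derivative of $K_a^{\mathrm{sm}}(x,y)$ produces a factor of order $1/x$ together with the same exponential factor $(\min(x,y)/\max(x,y))^{\pi/(2|a|)-\epsilon}$, so that a Schur-test estimate on $(1-\partial_x^2)^{s/2}P_a^{\mathrm{sm}}(1-\partial_x^2)^{-s/2}$ yields $L^2$-boundedness for every $s<\pi/(2|a|)$. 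Equivalently, one may shift the Mellin inversion contour for $P_a^{\mathrm{sm}}$ within the holomorphy strip of $m_a$, trading contour displacement for powers of $x$ (hence fractional derivatives), and the maximal admissible shift is exactly $\pi/(2|a|)$ because further shift would cross a pole of $m_a$. Assembling the three pieces then completes the proof.
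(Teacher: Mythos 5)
Your overall architecture (conjugate by $\cC_2$, identify the kernel $K_a(x,y)=\frac{1}{\sqrt{xy}}k_a(\log\frac{x}{y})$, split off the identity and a principal-value part, and trace the threshold $\frac{\pi}{2|a|}$ to the holomorphy strip of $\tanh(a\xi)$) is the right one and is close to the paper's. However, the decomposition $k_a=\tfrac12\delta+c_a\,\mathrm{p.v.}\tfrac1t+h_a$ with $h_a$ smooth \emph{and exponentially decaying} is false, and this is where the proof breaks. Since $m_a(\xi)=\tfrac12+\tfrac12\tanh(a\xi)$, the remainder has Fourier transform $\widehat{h_a}(\xi)=\tfrac12\tanh(a\xi)-\pi i c_a\sgn\xi$ (up to normalization); the only choice of $c_a$ that makes $\widehat{h_a}$ decay at infinity leaves it with a jump discontinuity at $\xi=0$, so $h_a$ can decay no faster than $|t|^{-1}$. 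Concretely, $k_a-\tfrac12\delta$ is a constant times $\mathrm{p.v.}\,\frac{1}{\sinh(\pi t/2a)}$, and $\frac{1}{\sinh(ct)}-\frac{1}{ct}\sim-\frac{1}{ct}$ as $|t|\to\infty$. With only $O(1/|t|)$ decay, the Schur-test integrals for $P_a^{\mathrm{sm}}$, which reduce to $\int_{\bbR}|h_a(t)|\,e^{(\alpha-\frac12)t}\,dt$, diverge at $t\to+\infty$ or $t\to-\infty$ for every $\alpha$, so the estimate on the "smoothing remainder" fails; and the fallback contour-shift argument is also unavailable for this piece, because $\sgn\xi$ does not continue holomorphically into any strip, so $\widehat{h_a}$ has no strip of holomorphy to shift in.

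The repair is to \emph{not} peel off $\mathrm{p.v.}\tfrac1t$: keep the full kernel $\mathrm{p.v.}\,\frac{1}{\sinh(\pi t/2a)}$ as a single object, which is simultaneously the Calder\'on--Zygmund singular part near $t=0$ and exponentially decaying at the rate $\frac{\pi}{2|a|}$ at infinity, and whose multiplier $2|a|i\tanh(a\xi)$ is holomorphic and bounded on the strip $|\Im\xi|<\frac{\pi}{2|a|}$. This is exactly what the paper does: Lemma \ref{Fourier-transf-pv} and Corollary \ref{corollary} give $T_{m_a}=\tfrac12\big(I+\tfrac{1}{2|a|i}T_{a,0}\big)$; Corollary \ref{corollary2}~(iv) establishes the commutation $\cF(\Lambda_a f)(\xi,j)=\Lambda_a(\cF(\chi_+f))(\xi,j)$ for $\xi>0$ (your homogeneity observation, made precise); and then the weight $\xi^{s}$ on the Fourier side is absorbed by replacing $T_{a,0}$ with $T_{a,s}$, whose symbol $2|a|i\tanh(a(\xi-is))$ is bounded precisely for $0\le s<\frac{\pi}{2|a|}$ --- this is where the sharp exponent enters, as you anticipated. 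A secondary gap: for the piece you call $P_a^{\mathrm{HT}}$ you invoke "standard Calder\'on--Zygmund theory" for $W^{s,2}$-boundedness, but CZ theory yields $L^p$ bounds, not Sobolev bounds, and your operator is not translation-invariant; the paper instead handles the multiplication by $\chi_+$ via the Hilbert transform on the weighted space $L^2(\xi^{2s})$ with $\xi^{2s}\in A_2$ (whence the restriction $s<\tfrac12$), and handles the singular part by the weighted commutation argument just described.
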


In order to prove the theorem, we need a few preliminary results.
\begin{lem}\label{Fourier-transf-pv}
 Let be $|a|\geq\pi$ and $\kappa\in[0,\frac{\pi}{2|a|})$ and consider the
 tempered distribution $U_{a,\kappa}$ defined by, for $g$ in  the Schwartz space $\cS(\bbR)$, 
 $$
 \left<U_{a,\kappa},g\right>
=\lim_{\eps\to0}
\int_{\eps<|\frac{\pi\xi}{2a}|}\frac{e^{-\kappa t}}{\sinh(\frac{\pi
     t}{2a})} g(t)\, dt. 
 $$
 Then, the Fourier transform $\widehat{U_{a,\kappa}}$ is given by 
\begin{equation}\label{F-T-Uka}
 \widehat{U_{a,\kappa }}(\xi)=2|a| i\tanh\big(a(\xi-i\kappa )\big). 
\end{equation} 
 \end{lem}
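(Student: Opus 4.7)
The plan is to reduce to a contour integration around a rectangle of height $2|a|$ in $\bbC$, exploiting the quasi-periodicity identity $\sinh(\pi(t-2ai)/(2a))=-\sinh(\pi t/(2a))$. First I would absorb the exponential $e^{-\kappa t}$ into the frequency by writing $e^{-\kappa t-i\xi t}=e^{-i\zeta t}$ with $\zeta=\xi-i\kappa$, so that the claim reduces to establishing
\[
I(\zeta):=\mathrm{p.v.}\int_{\bbR}\frac{e^{-i\zeta t}}{\sinh(\pi t/(2a))}\,dt\;=\;2|a|\,i\,\tanh(a\zeta)
\]
whenever $|\mathrm{Im}\,\zeta|<\pi/(2|a|)$. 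The hypothesis $\kappa\in[0,\pi/(2|a|))$ is exactly what guarantees (i) that the integrand $e^{-i\zeta t}/\sinh(\pi t/(2a))$ decays exponentially at $\pm\infty$---since $1/\sinh(\pi t/(2a))\sim 2e^{-\pi|t|/(2|a|)}$---so that the PV integral converges absolutely away from the origin, and (ii) that the claimed right-hand side is holomorphic in $\zeta$ on the relevant strip, which justifies passing from $\kappa=0$ by analytic continuation.

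Assuming without loss of generality that $a>0$ (the case $a<0$ follows by replacing $\sinh(\pi t/(2a))$ with $-\sinh(\pi t/(2|a|))$ and absorbing the sign into the $|a|$ factor), I would apply the residue theorem to the positively oriented rectangular contour with vertices $\pm R$ and $\pm R-2ai$, suitably indented by semicircles of radius $\varepsilon$ around the two simple poles of the integrand inside/on the rectangle: $t=0$ on the top edge and $t=-2ai$ on the bottom edge. The vertical sides contribute negligibly as $R\to\infty$, since $|e^{-i\zeta t}|$ grows at worst like $e^{\kappa|\mathrm{Re}\,t|}$ while $|\sinh(\pi t/(2a))|\sim \tfrac12 e^{\pi|\mathrm{Re}\,t|/(2a)}$, and by hypothesis $\kappa<\pi/(2a)$. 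On the bottom edge, the substitution $t\mapsto t-2ai$ together with the quasi-periodicity identity expresses the bottom PV integral as $-e^{-2a\zeta}I(\zeta)$. The simple-pole residues compute to
\[
\mathrm{Res}_{t=0}\frac{e^{-i\zeta t}}{\sinh(\pi t/(2a))}=\frac{2a}{\pi},\qquad \mathrm{Res}_{t=-2ai}\frac{e^{-i\zeta t}}{\sinh(\pi t/(2a))}=-\frac{2a}{\pi}e^{-2a\zeta}.
\]

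Combining these four pieces one arrives at an equation of the shape $(1+e^{-2a\zeta})I(\zeta) = 2ai(1-e^{-2a\zeta})$, and using the elementary identity $(1-e^{-2a\zeta})/(1+e^{-2a\zeta})=\tanh(a\zeta)$ yields $I(\zeta)=2ai\tanh(a\zeta) = 2|a|i\tanh(a(\xi-i\kappa))$, as required. The main obstacle---and by far the most error-prone step---is the sign and orientation bookkeeping: for each pole one must carefully determine whether the semicircular indent runs into or out of the rectangle and whether the resulting deformed contour encloses the pole (contributing a full $2\pi i\,\mathrm{Res}$) or merely sweeps past it (contributing a half-arc $\pm i\pi\,\mathrm{Res}$). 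As a cross-check I would specialize to $\kappa=0$ and substitute $u=\pi t/(2a)$ to reduce the statement to the classical formula $\int_0^\infty \sin(\omega u)/\sinh u\,du=(\pi/2)\tanh(\pi\omega/2)$, verifying both magnitude and sign before promoting the identity to complex $\zeta$ by analyticity in the strip $|\mathrm{Im}\,\zeta|<\pi/(2|a|)$.
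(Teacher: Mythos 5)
Your plan is, in substance, the ``standard contour integration argument'' that the paper invokes without detail: the paper's proof reduces, exactly as you do, to evaluating $\mathrm{p.v.}\int_{\bbR} e^{-i\zeta t}/\sinh\big(\tfrac{\pi t}{2a}\big)\,dt$ for $|\Im\zeta|<\tfrac{\pi}{2|a|}$ (its display \eqref{R-eps-contour}) and simply states the answer. Your rectangle of height $2|a|$, the quasi-periodicity of $\sinh$, the two boundary poles with the residues you give, and the decay of the vertical sides under the hypothesis $\kappa<\tfrac{\pi}{2|a|}$ are all correct and constitute the natural way to fill in what the paper omits.

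The problem is the final combination. Carrying out the bookkeeping with either choice of indentation (and compensating with the corresponding full or half residues), the four pieces combine to
\begin{equation*}
\big(1+e^{-2a\zeta}\big)\,I(\zeta)=-2ai\,\big(1-e^{-2a\zeta}\big),
\qquad\text{so}\qquad I(\zeta)=-2|a|\,i\,\tanh(a\zeta)\quad (a>0),
\end{equation*}
with a minus sign, not the identity $(1+e^{-2a\zeta})I=2ai(1-e^{-2a\zeta})$ you assert. Your own proposed cross-check confirms this: for real $\omega$ the cosine part of $e^{-i\omega u}/\sinh u$ is odd, hence
\begin{equation*}
\mathrm{p.v.}\int_{\bbR}\frac{e^{-i\omega u}}{\sinh u}\,du
=-2i\int_0^{\infty}\frac{\sin(\omega u)}{\sinh u}\,du
=-i\pi\tanh\Big(\frac{\pi\omega}{2}\Big),
\end{equation*}
which is the negative of \eqref{F-T-Uka} and of the paper's own \eqref{R-eps-contour}. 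In other words, the sign discrepancy is already present in the statement you were asked to prove, and your derivation reaches the announced right-hand side only because the combination step silently flips a sign relative to the residues and orientation you describe. The issue is harmless for the rest of the paper --- it merely changes the sign of the constant relating $T_{m_a}$ to $T_{a,0}$ in Corollary \ref{corollary}, and every subsequent boundedness conclusion survives --- but a correct write-up must either record the minus sign or exhibit the convention that removes it.
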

 \begin{proof}
Writing
$$
\int_{\eps<|\frac{\pi t}{2a}|}\frac{e^{-\kappa t}}{\sinh(\frac{\pi
     t}{2a})} g(t)\, dt
= \int_{\eps<|\frac{\pi t}{2a}|\le 1}\frac{e^{-\kappa t}g(t)- g(0)}{\sinh(\frac{\pi
     t}{2a})} \, dt + \int_{1<|\frac{\pi t}{2a}|}\frac{e^{-\kappa t}}{\sinh(\frac{\pi
     t}{2a})} g(t)\, dt
$$
it is easy to check that $U_{a,\kappa }$ is in fact a well-defined tempered
distribution.

Next, a standard contour integration
argument now shows that, for $\zeta\in\bbC$ with $|\Im\zeta|<1$, 
$$
\int_{\eps<|t|<R} \frac{e^{-i\zeta t}}{\sinh t}\, dt = i\pi
\tanh({\textstyle{\frac{\pi}{2}}}\zeta) + o(1) 
\, ,
$$
as $\eps\to0^+$ and $R\to+\infty$, so that
\begin{equation}\label{R-eps-contour}
\int_{\eps<|\frac{\pi\xi}{2a}|<R} \frac{e^{-i\zeta t}}{\sinh (\frac{\pi
     t}{2a}) }\, dt = 2 |a| i
\tanh(a\zeta) + o(1) \, ,
\end{equation}
as $\eps\to0^+$ and $R\to+\infty$. 
Therefore, setting $\zeta=\xi-i\kappa $ we obtain \eqref{F-T-Uka} with
$0\le \kappa <\frac{\pi}{2|a|}$. 
 \end{proof}

\begin{cor}\label{corollary}
 The following facts hold.
 \begin{itemize}
  \item[$(i)$] The convolution operator $T_{a,\kappa }: g\mapsto U_{a,\kappa }\ast g$,
    densely defined on $\cS(\bbR)$, extends to a bounded operator $L^2\to L^2$ for every
    $\kappa \in[0,\frac{\pi}{2|a|})$. 
  \smallskip
  \item[$(ii)$] The Fourier multiplier operator $T_{m_a}$ is given by
$$
 T_{m_a}
={\textstyle{\frac12}} \big( I +  {\textstyle{\frac{1}{2|a|i}}}  
T_{a,0}
\big) \,. 
 $$
 \item[$(iii)$] The operator $T_{a,0}$ is a Calder\'{o}n--Zygmund
   singular integral operator. Therefore, for every
   $0<\varepsilon<R<\infty$, the truncated operator 
 $$
 T_{a,0,(\varepsilon,R)}g(x):=\int_{\eps<|\frac{\pi t}{2a}|<R} \frac{g(x-t)}{\sinh(\frac{\pi t}{2a})} \, dt
 $$
 densely defined on $\cS(\bbR)$ extends to a bounded operator
 $L^2\to L^2$ with operator norm independent of $\varepsilon$ and
 $R$. Moreover, 
 $$
\lim_{\eps\to 0^+ \atop R\to+\infty} T_{a,0,(\varepsilon,R)}g= T_{a,0}g
 $$
 in $L^2(\bbR)$.
  \end{itemize}
\end{cor}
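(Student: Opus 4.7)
The proof splits into three steps whose central ingredient is already in place from Lemma \ref{Fourier-transf-pv}. For (i), my plan is to apply Plancherel's theorem directly: since $\widehat{U_{a,\kappa}}(\xi)=2|a|i\tanh\bigl(a(\xi-i\kappa)\bigr)$ and the poles of $\tanh$ lie at $i\pi(n+\tfrac12)$, the restriction $|a|\kappa<\pi/2$ keeps the complex line $\{a(\xi-i\kappa):\xi\in\bbR\}$ a uniform positive distance from all those poles. Hence $\widehat{U_{a,\kappa}}\in L^\infty(\bbR)$, and since $T_{a,\kappa}g = \cF^{-1}(\widehat{U_{a,\kappa}}\widehat g\,)$ for Schwartz $g$, Plancherel extends $T_{a,\kappa}$ to a bounded operator on $L^2(\bbR)$ with norm controlled by $\|\widehat{U_{a,\kappa}}\|_\infty$.

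For (ii), I would carry out the elementary algebraic identity
$$
m_a(\xi)=\frac{e^{a\xi}}{2\cosh(a\xi)}=\frac{e^{a\xi}}{e^{a\xi}+e^{-a\xi}}=\frac12\bigl(1+\tanh(a\xi)\bigr).
$$
Combined with $\widehat{U_{a,0}}(\xi)=2|a|i\tanh(a\xi)$ from (i) specialized to $\kappa=0$, this rewrites as $m_a = \tfrac12+\tfrac{1}{4|a|i}\widehat{U_{a,0}}$, which on the operator side is exactly the claimed decomposition $T_{m_a}=\tfrac12\bigl(I+\tfrac{1}{2|a|i}T_{a,0}\bigr)$.

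For (iii), my plan is to verify that the kernel $K(t)=1/\sinh(\pi t/(2a))$ satisfies the standard Calder\'on--Zygmund size, regularity and cancellation conditions: $|K(t)|\le C/|t|$, $|K'(t)|\le C/|t|^2$, and vanishing integrals over symmetric annuli (automatic from the oddness of $K$). Near $t=0$ the expansion $K(t)\sim 2a/(\pi t)$ gives a Hilbert-transform type singularity, while $K$ and $K'$ decay exponentially as $|t|\to\infty$, so the conditions hold globally. Combined with the $L^2$ boundedness of $T_{a,0}$ already granted by (i), the classical Calder\'on--Zygmund theory (e.g.\ \cite{MR1800316,MR2445437,MR2463316}) then provides uniform $L^2$ bounds on the truncated operators $T_{a,0,(\varepsilon,R)}$ independent of $\varepsilon,R$, together with the $L^2$-convergence $T_{a,0,(\varepsilon,R)}g\to T_{a,0}g$ as $\varepsilon\to 0^+$ and $R\to+\infty$.

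The main technical point is to reconcile the three realizations of $T_{a,0}$ that appear in the statement—convolution with the tempered distribution $U_{a,0}$, the Fourier multiplier with symbol $2|a|i\tanh(a\xi)$, and the principal-value CZ operator with kernel $K$—but each identification follows from a direct limiting argument, and in fact equation \eqref{R-eps-contour} of the lemma already encodes precisely the convergence of the truncated integrals to the multiplier $2|a|i\tanh(a\xi)$, so no new computation is required.
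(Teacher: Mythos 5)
Your proposal is correct and follows essentially the same route as the paper: part $(i)$ via the $L^\infty$ bound on $\widehat{U_{a,\kappa}}$ from Lemma \ref{Fourier-transf-pv} and Plancherel, part $(ii)$ via the identity $m_a(\xi)=\tfrac12\bigl(1+\tanh(a\xi)\bigr)$, and part $(iii)$ by invoking classical Calder\'on--Zygmund theory. The only difference is one of detail: the paper states $(i)$ and $(iii)$ with a bare reference to the standard theory, whereas you explicitly verify the size, smoothness and cancellation conditions on the kernel $1/\sinh(\pi t/(2a))$, which is a harmless (and arguably welcome) elaboration rather than a different argument.
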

\begin{proof} 
  $(i)$ is obvious, since $\widehat{U_{a,\kappa }}\in L^\infty(\bbR)$ for
  $\kappa \in[0,\frac{\pi}{2|a|})$.
Since 
  $$
  \widehat{U_{a,0}}(\xi)=  2|a| i 
  \tanh(a\xi)=  2|a| i 
\Big(\frac{e^{a\xi}}{\cosh(a\xi)}-1\Big)
=  4|a| i 
\big( m_a(\xi) - {\textstyle{\frac12}} \big)\,,
  $$
$(ii)$ also follows. Part $(iii)$ follows from the standard theory of
Calder\'{o}n--Zygmund singular integral operators and we refer the
reader, for instance, to \cite[Chapter 5]{MR1800316}. 
\ms
\end{proof}

From the previous corollary, we finally obtain an integral
representation for the operator $P_a$.  
For $|a|\ge\pi$, we now set 
\begin{equation}\label{Lambda-eps-R}
\Lambda_{a,(\eps,R)}:= \chi_+\cC^{-1}_2 T_{a,0,(\varepsilon,R)}\cC_2\chi_+ \,.
\end{equation}

\begin{cor}\label{corollary2}
The following facts hold.
\begin{itemize}
\item[$(i)$] there exists a constant $C>0$, independent of
$\eps,R>0$ such that 
$$
\| \Lambda_{a,(\eps,R)} f \|_{L^2(\bbR\times\bbT)} \le C \| f
\|_{L^2(\bbR\times\bbT)}\,;
$$
\item[$(ii)$]$\lim_{\eps\to 0^+ \atop R\to+\infty} \Lambda_{a,(\eps,R)} f 
:= \Lambda_a f$
exists in $L^2(\bbR\times\bbT)$-norm, for every test function $f$,
and it defines a bounded operator
$
\Lambda_a :L^2(\bbR\times\bbT)\to
L^2(\bbR\times\bbT)$\,;
\smallskip
\item[$(iii)$] For every $f\in L^2(\bbR\times\bbT)$ the operator $P_a$ is given by
\begin{align*}
P_a f  & = {\textstyle{\frac12}}  (\chi_+f)
+  {\textstyle{\frac{1}{2|a|i}}}  
\chi_+(\cC_2^{-1} T_{a,0} \cC_2)(\chi_+f)
  \\
& =:  {\textstyle{\frac12}} \Lambda_+ f+ 
{\textstyle{\frac{1}{2|a|i}}}  \Lambda_a f\,;
\end{align*}
\smallskip
\item[$(iv)$] for $(\xi,j)\in\bbR\times\bbZ$ and $f\in
  L^2(\bbR\times\bbT) $,
  $$
  \cF(\Lambda_a f)(\xi,j)= \begin{cases} 
      \Lambda_a(\cF(\chi_+ f))(\xi,j) & \text{if } \xi> 0 \\
      \phantom{\int}\\
      \Lambda_a(\widetilde{\cF(\chi_+ f))}(-\xi,j) & \text{if } \xi< 0\,,
   \end{cases}
  $$
where $\widetilde{\cF(\chi_+ f))}(x,\theta):=\cF(\chi_+ f))(-x,\theta)$.
\end{itemize}
\end{cor}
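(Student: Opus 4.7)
Plan: The four claims follow once we combine Corollary~\ref{corollary} with basic properties of $\cC_2$ and $\chi_+$; only (iv) requires genuine computation, so I would treat (i)--(iii) quickly and spend the effort on (iv).

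For (i), I will note that $\chi_+$ is a contraction on $L^2$, that $\cC_2^{\pm1}$ are isometries between $L^2((0,\infty)\times\bbT)$ and $L^2(\bbR\times\bbT)$, and that $T_{a,0,(\eps,R)}$ is uniformly bounded on $L^2(\bbR\times\bbT)$ by Corollary~\ref{corollary}(iii); composing these four factors gives the uniform estimate with $C=\|T_{a,0}\|_{L^2\to L^2}$. For (ii), I will apply the $L^2$-convergence $T_{a,0,(\eps,R)}h\to T_{a,0}h$ from Corollary~\ref{corollary}(iii) to $h=\cC_2\chi_+ f$ with $f$ a test function, then postcompose with the bounded operator $\chi_+\cC_2^{-1}$ to obtain $\Lambda_{a,(\eps,R)}f\to\Lambda_a f$ in $L^2(\bbR\times\bbT)$. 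Density of test functions together with the uniform bound from (i) then extends $\Lambda_a$ to a bounded operator on $L^2(\bbR\times\bbT)$.

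For (iii), I will substitute the decomposition $T_{m_a}=\tfrac12\bigl(I+\tfrac{1}{2|a|i}T_{a,0}\bigr)$ from Corollary~\ref{corollary}(ii) into $P_a=\chi_+\cC_2^{-1}T_{m_a}\cC_2\chi_+$. Since $\cC_2^{-1}\cC_2$ is the identity on $L^2((0,\infty)\times\bbT)$ and $\chi_+^2=\chi_+$, the identity contribution is exactly $\tfrac12\chi_+ f=\tfrac12\Lambda_+ f$, while the $T_{a,0}$ contribution is $\tfrac{1}{2|a|i}\Lambda_a f$ by definition of $\Lambda_a$.

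The main obstacle is part (iv). The structural observation is that $\cC_2^{-1}T_{a,0}\cC_2$ acts only in the first variable, so $\Lambda_a$ commutes with the Fourier transform $\cF_2$ in the periodic variable $\theta$, reducing the statement to a one-variable identity in $x\in(0,\infty)$ for each fixed $j\in\bbZ$. I will then unwind $\cF(\Lambda_a f)(\xi,j)$ using Fubini together with the substitution $t=e^y$ coming from the definition $\cC_2\chi_+ f(y,\theta)=e^{y/2}f(e^y,\theta)$, and the fact that $T_{a,0}$ is convolution with the tempered distribution $U_{a,0}$ whose Fourier transform $2|a|i\tanh(a\cdot)$ is odd (Lemma~\ref{Fourier-transf-pv}). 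A direct calculation will then express $\cF(\Lambda_a f)(\xi,j)$ for $\xi>0$ as $\Lambda_a$ applied in the first variable to $\cF(\chi_+ f)(\xi,j)$; the formula on the negative half-line follows from the oddness of the symbol combined with the fact that $\chi_+ f$ vanishes for $x<0$, which together produce the tilde reflection. The calculation is notationally heavy but no deeper tool than the change of variables and the parity of $\tanh$ is needed.
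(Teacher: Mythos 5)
Your treatment of (i)--(iii) coincides with the paper's: the uniform $L^2$ bound in (i) is obtained by sandwiching the uniformly bounded truncations $T_{a,0,(\eps,R)}$ of Corollary \ref{corollary}(iii) between the isometries $\cC_2^{\pm1}$ and the contractions $\chi_+$; (ii) follows by applying the $L^2$-convergence of the truncations to $\cC_2\chi_+f$ and using density together with (i); and (iii) follows by inserting $T_{m_a}=\frac12\bigl(I+\frac{1}{2|a|i}T_{a,0}\bigr)$ into \eqref{model-Pa}. For (iv), your computational plan --- work in the first variable only, expand $\cF(\Lambda_{a,(\eps,R)}f)$ by Fubini on the absolutely convergent truncated integrals, and change variables $u=xe^{-t}$ to arrive at $\int\frac{e^{t/2}}{\sinh(\pi t/(2a))}\cF(\chi_+f)(\xi e^t,j)\,dt$ --- is also exactly what the paper does.

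The one genuine problem is the mechanism you offer for the negative half-line. The tilde reflection is not produced by the oddness of $\widehat{U_{a,0}}$ together with the vanishing of $\chi_+f$ on $\{x<0\}$; indeed $\cF(\chi_+f)$ does \emph{not} vanish on $\{\xi<0\}$, and that is precisely why a reflection is needed at all. What actually produces it is that $\Lambda_a$ is a weighted average of the dilations $g\mapsto g(e^{-t}\cdot)$, and dilations preserve the sign of $\xi$: since $\Lambda_a$ only samples its argument on $(0,+\infty)$, for $\xi<0$ the integral $\int\frac{e^{t/2}}{\sinh(\pi t/(2a))}\cF(\chi_+f)(\xi e^t,j)\,dt$ sees only the restriction of $\cF(\chi_+f)$ to $(-\infty,0)$, and to recognize it as $\Lambda_a$ of something one must pull back to the positive axis via $\xi e^t=-e^{\log(-\xi)+t}$; this is the substitution carried out in the paper. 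Where parity of the kernel $1/\sinh$ does enter (silently in the paper as well) is in converting the weight $e^{+t/2}$ and argument $\xi e^{+t}$ produced by the Fourier transform back into the weight $e^{-t/2}$ and argument $\xi e^{-t}$ occurring in the definition of $\Lambda_{a,(\eps,R)}$, via $t\mapsto -t$ at the cost of a sign from $\sinh(-u)=-\sinh(u)$. So your instinct that parity is relevant is correct, but it is responsible for recovering the operator $\Lambda_a$ itself (up to sign), not for the reflection. If you carry out the change of variables explicitly for $\xi<0$ the correct formula falls out; the argument ``oddness of the symbol plus support of $\chi_+f$'' as written does not yield it.
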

\proof 
From $(iii)$ of Corollary \ref{corollary} and recalling that $\cC_2:
L^2((0,\infty),\times\bbT)\to L^2(\bbR\times\bbT)$ is an isometry, we
get 
\begin{align*}
\int_\bbT\int_\bbR \big|\Lambda_{a,(\eps,R)} f (x,\theta)\big|^2 \,
dxd\theta
&= \int_\bbT\int_0^{+\infty} 
\big| \cC^{-1}_2 T_{a,0,(\varepsilon,R)}\cC_2 (\chi_+f)(x,\theta)\big|^2 \, dxd\theta \\
&=\int_\bbT\int_\bbR
\big|  T_{a,0,(\varepsilon,R)}\cC_2 (\chi_+f)(x,\theta)\big|^2 \, dxd\theta\\
&\leq C\int_\bbT\int_\bbR \big|\cC_2 (\chi_+f)(x,\theta) \big|^2\ dxd\theta \\
&= C\int_\bbT\int_0^{+\infty}|\chi_+(x) f(x,\theta)|^2\ dxd\theta \\
&\leq C \int_\bbT\int_\bbR \big|f(x,\theta)\big|^2\ dxd\theta
\end{align*}
as we wished. Thus, $(i)$ is proved. With a similar argument, $(ii)$
is also proved. For $(iii)$ the proof is easily deduced from these
results, $(ii)$ of Corollary \ref{corollary} and \eqref{model-Pa}.  

Finally, observing that the integrals below converge absolutely and
interpreting the limits as limits in the $L^2$-norm, 
\begin{align*} 
\cF (\Lambda_a f)(\xi,j) 
&= \lim_{\eps\to 0^+ \atop R\to +\infty} \cF (\Lambda_{a,(\eps,R)}
f)(\xi,j) \\
 &= \lim_{\eps\to 0^+ \atop R\to +\infty}
\int_{\eps<|\frac{\pi t}{2a}|<R}\frac{e^{-\frac t2}}{\sinh(\frac{\pi
    t}{2a})} \int_\bbR \chi_+(x) \cF_2 f(xe^{-t},j) e^{-ix\xi}\, dx
dt\\ 
 &= \lim_{\eps\to 0^+ \atop R\to +\infty} 
\int_{\eps<|\frac{\pi t}{2a}|<R}\frac{e^{-\frac t2}}{\sinh(\frac{\pi
    t}{2a})} \cF\Big( (\chi_+ f)\big((\cdot) e^{-t},\cdot\big) \Big)(\xi,j)\, dt\\
  &= \lim_{\eps\to 0^+ \atop R\to +\infty} 
\int_{\eps<|\frac{\pi t}{2a}|<R}\frac{e^{\frac t2}}{\sinh(\frac{\pi
    t}{2a})}\cF (\chi_+ f)(\xi e^t,j)\, dt \,.
\end{align*}
 Suppose now that $\xi>0$. Then, 
\begin{align*}
 \int_{\eps<|\frac{\pi t}{2a}|<R}\frac{e^{\frac t2}}{\sinh(\frac{\pi
    t}{2a})}\cF (\chi_+ f)(\xi e^t,j)\, dt
&=e^{-\frac{\log\xi}{2}}\int_{\eps<|\frac{\pi t}{2a}|<R}\frac{e^{\frac{\log\xi+t}{2}}}{\sinh(\frac{\pi
    t}{2a})}\cF (\chi_+ f)(e^{	\log\xi+t},j)\, dt \\
    &=\Lambda_{a,(\varepsilon,R)}(\cF(\chi_+ f))(\xi,j).
\end{align*}
Therefore, if $\xi>0$,
$$
\cF(\Lambda_a f)(\xi,j)=\Lambda_{a}(\cF(\chi_+ f))(\xi,j)\,,
$$
as we wished to show. Similarly, if $\xi<0$, we get
\begin{align*}
  \int_{\eps<|\frac{\pi t}{2a}|<R}\frac{e^{\frac t2}}{\sinh(\frac{\pi
    t}{2a})}\cF (\chi_+ f)(\xi e^t,j)\, dt
&=e^{-\frac{\log(-\xi)}{2}}\int_{\eps<|\frac{\pi t}{2a}|<R}\frac{e^{\frac{\log(-\xi)+t}{2}}}{\sinh(\frac{\pi
    t}{2a})}\cF (\chi_+ f)(-e^{	\log(-\xi)+t},j)\, dt \\
    &=\Lambda_{a,(\varepsilon,R)}\widetilde{(\cF(\chi_+ f))}(\xi,j)
\end{align*}
and $(iv)$ is finally proved.
\qed
\ms

We now have all the ingredients to prove Theorem \ref{Sobolev-Pa}. The
boundedness of $P_a$ will follow from the following lemma. 

\begin{lem}\label{lem1-sobolev}
 Let be $|a|\geq\pi$. Then, for every $s\in[0,\frac{\pi}{2|a|})$,
 \begin{itemize}
  \item[$(i)$] the operator $\Lambda_+$ defines a bounded
    operator 
$\Lambda_+ : W^{s,2}(\bbR\times\bbT)\to W^{s,2}(\bbR\times\bbT)$\,; 
  \smallskip
    \item[$(ii)$] the operator $\Lambda_a$ defines a bounded
    operator 
$\Lambda_a: W^{s,2}(\bbR\times\bbT)\to W^{s,2}(\bbR\times\bbT)$\,.
 \end{itemize}
\end{lem}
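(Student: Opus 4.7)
The argument splits into the two statements, and I would treat them separately.

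For (i), observe that $s<\frac{\pi}{2|a|}\le\frac12$, placing us in the range where multiplication by $\chi_+$ is bounded on $W^{s,2}(\bbR)$, a classical result. I would verify this directly from the Gagliardo characterisation: writing $\chi_+(x)f(x)-\chi_+(y)f(y)=\chi_+(x)(f(x)-f(y))+(\chi_+(x)-\chi_+(y))f(y)$, the first term is pointwise dominated by the integrand of $[f]_{s,2}$, while the second produces the off-diagonal integral $\int_0^\infty\int_{-\infty}^0 |f(x)|^2|x-y|^{-1-2s}\,dy\,dx$, which after the inner integration in $y$ collapses to the Hardy-type expression $\int_0^\infty |x|^{-2s}|f(x)|^2\,dx$, finite by the one-dimensional Hardy inequality for $s<\frac12$. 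Since $\Lambda_+$ acts only on the first variable, the bound lifts to $W^{s,2}(\bbR\times\bbT)$ via the usual tensor decomposition.

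For (ii), I would use the intertwining formula (iv) of Corollary \ref{corollary2} to transfer the Sobolev estimate on $\Lambda_a f$ into a weighted $L^2$ estimate for $\Lambda_a$ itself. Setting $h:=\cF(\chi_+ f)$ and $\widetilde h(y,j):=h(-y,j)$, Plancherel and the evenness of $(1+\xi^2+j^2)^s$ in $\xi$ yield
\begin{equation*}
\|\Lambda_a f\|_{W^{s,2}}^2=\sum_{j\in\bbZ}\int_0^\infty(1+\xi^2+j^2)^s\bigl(|\Lambda_a h(\xi,j)|^2+|\Lambda_a\widetilde h(\xi,j)|^2\bigr)\,d\xi.
\end{equation*}
By part (i), $\|h\|_{L^2((1+\xi^2+j^2)^s d\xi\,dj)}\le C\|f\|_{W^{s,2}}$, so the whole problem reduces to showing that $\Lambda_a$ is bounded on $L^2((0,\infty),(J_j^2+\xi^2)^s\,d\xi)$ with $J_j:=\sqrt{1+j^2}$ and a bound \emph{uniform in $j$}. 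The outer $\chi_+$ is harmless, and since $\cC_2^{-1}T_{a,0}\cC_2$ is a Mellin convolution it commutes with the dilation $\xi\mapsto J_j\xi$; this reduces the uniform-in-$j$ weighted bound to the single weight $(1+u^2)^s\asymp 1+u^{2s}$. The unweighted case is Corollary \ref{corollary2}. For the weight $u^{2s}$, the isometry $\cC_{2,s}\phi(t):=e^{(\frac12+s)t}\phi(e^t)$ from $L^2((0,\infty),u^{2s}\,du)$ onto $L^2(\bbR)$ conjugates $\cC_2^{-1}T_{a,0}\cC_2$ into convolution on $\bbR$ with kernel $e^{su}/\sinh(\pi u/(2a))$, whose Fourier symbol is $2|a|i\tanh(a(\xi+is))$ by the contour shift built into Lemma \ref{Fourier-transf-pv}.

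The principal obstacle is precisely the justification of this contour shift and the sharpness of the range it yields: $\tanh(a(\xi+is))$ remains bounded on the real $\xi$-line exactly when $|s|<\frac{\pi}{2|a|}$, since the first poles of $\tanh$ sit at $\zeta=\pm i\pi/(2a)$. Careful use of the principal-value definition of $U_{a,\kappa}$ and its analytic extension in Lemma \ref{Fourier-transf-pv} is required at this step, while the remaining tasks — dilation uniformity in $j$, stripping off the outer $\chi_+$, and passing between $L^2(du)\cap L^2(u^{2s}\,du)$ and $L^2((1+u^2)^s\,du)$ — are routine verifications.
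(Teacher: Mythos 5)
Your proposal is correct, and part (ii) is essentially the paper's argument in different packaging: the paper also reduces via Corollary \ref{corollary2}\,(iv) to a weighted bound on the half-line, replaces $(1+\xi^2+j^2)^s$ by $1+\xi^{2s}+j^{2s}$, and absorbs the weight $\xi^{s}$ into the kernel by writing $\xi^{s}\cF(\Lambda_a f)(\xi,j)=\cC_2^{-1}T_{a,s}\cC_2\big((\cdot)^s\cF(\chi_+f)\big)(\xi,j)$, which is exactly your conjugation by $\cC_{2,s}$; the boundedness of the shifted symbol $\tanh(a(\xi\pm is))$ for $s<\frac{\pi}{2|a|}$ is Lemma \ref{Fourier-transf-pv}, as you anticipated (for a negative shift parameter one uses the same contour computation, valid for $|\Im\zeta|<1$). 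Your dilation-in-$j$ trick is a clean substitute for the paper's cruder splitting of the weight, under which the $j^{2s}$ piece is handled by the plain $L^2$ bound. Where you genuinely diverge is part (i): the paper does not touch the Gagliardo seminorm here, but instead writes $\cF(\chi_+f)=\frac12(\cF f-i\cH_1\cF f)$ and invokes the boundedness of the Hilbert transform on $L^2(\xi^{2s})$, $\xi^{2s}$ being an $A_2$ weight for $s<\frac12$. Your route via the off-diagonal integral and the fractional Hardy inequality $\int|f|^2|x|^{-2s}dx\lesssim\|f\|_{\dot H^s}^2$ (valid precisely for $s<\frac12$ in one dimension) is equally correct and more elementary, at the cost of passing through the equivalence of the Bessel-potential and Gagliardo norms for $0<s<1$, which the paper has already recorded; the paper's $A_2$ argument has the small advantage of staying entirely on the Fourier side, which is the form in which the estimate is then reused in part (ii).
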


Before proving the lemma, we fix some notation and recall the
definition of the classical Hilbert transform. We refer the reader,
for instance, to \cite[Chapter 4]{MR2445437}.  

Let be $f:\bbR\times\bbT\to \bbC$ a test function. The Hilbert
transform 
with respect to the
first variable   is defined by the formula 
\begin{equation*}
 \cF_1(\cH_{1}f)(\xi,\theta)=-i\sgn(\xi)\cF_1(f)(\xi,\theta) \,,
\end{equation*}
where  as usual, $\cF_1$ denotes the Fourier transform with respect to  the
first variable.

If $p\in(0,\infty)$ and $w(x)$ is a non-negative real function, then
we denote by $L^p(w)$ the space of functions such that 
$$
\int_\bbR |g(x)|^p w(x)\ dx<\infty.
$$
It is a well-known result that the Hilbert transform extends to
a bounded operator $L^p(w)\to L^p(w)$ whenever $w$ is a weight
in the
Muckenhoupt class $A_p$  (see, e.g., \cite{MR2463316}). 

We are now ready to prove the lemma.
\proof[Proof of Lemma \ref{lem1-sobolev}] In order to prove the
boundedness of $\Lambda_+$, by Plancherel's formula, we need to
estimate 
$$
\|\Lambda_+ f\|_{W^{s,2}(\bbR\times\bbT)}^2
=\frac{1}{2\pi}\sum_{j\in\bbZ}\int_{\bbR}(1+\xi^2+j^2)^s|\cF(\chi_+ f)(\xi,j)|^2\ d\xi. 
$$
Notice that 
\begin{align*}
 \cF(\chi_+ f)(\xi,j)
&=\frac12\int_{-\infty}^\infty (1+\sgn(x))\cF_2 f(x,j)e^{-ix\xi}\ d\xi\\
 &=\frac{1}{2}\Big(\cF f(\xi,j)- i\cH_1\cF f(\xi,j) \Big).
\end{align*}
Moreover, since $s<\frac{\pi}{2|a|}<\frac12$, it holds
$$
(1+\xi^2+j^2)^s\leq1+\xi^{2s}+j^{2s}
$$
and the weight $w(\xi)=\xi^{2s}$ turns out to belong to the $A_2$
class (see \cite[Example 9.1.8]{MR2463316}). 

Therefore,
\begin{align*}
 \|\chi_+ f\|^2_{W^{s,2}}
&\leq \frac {1}{2\pi}\sum_{j\in\bbZ}\int_\bbR(1+\xi^2+j^2)^s|\cF(\chi_+ f)(\xi,j)|^2\ d\xi\\
 &\leq C \sum_{j\in\bbZ}\int_\bbR(1+\xi^2+j^2)^s\big|\cF f(\xi,j)-i\cH_1\cF f(\xi,j) \big|^2\ d\xi\\
 &\leq C\|f\|^2_{W^{s,2}} 
+C\sum_{j\in\bbZ}\int_\bbR (1+\xi^{2s}+j^{2s})|\cH_1 \cF f(\xi,j)|^2\ d\xi
\end{align*}
From the boundedness of $\cH_1$ in $L^2$, in $L^2(\xi^{2s})$ and
exploiting also Plancherel's formula, we obtain 
\begin{align*}
 \sum_{j\in\bbZ}\int_\bbR (1+\xi^{2s}+j^{2s})|\cH_{1}\cF f(\xi,j)|^2\
 d\xi
&\leq C \sum_{j\in\bbZ}\int_\bbR (1+\xi^{2s}+j^{2s})|\cF f(\xi,j)|^2\ d\xi\\
 &\leq C \sum_{j\in\bbZ}\int_\bbR (1+\xi^{2}+j^{2})^s|\cF f(\xi,j)|^2\ d\xi\\
 &\leq C\|f\|^2_{W^{s,2}}\, ,
 \end{align*}
as we wished to show.

Analogously, for the operator $\Lambda_a$ we have
\begin{align} \nonumber \label{Ma}
 \sum_{j\in\bbZ}\int_\bbR (1+\xi^2+j^2)^s |\cF (\Lambda_a
 f)(\xi,j)|^2\
 d\xi&\leq\sum_{j\in\bbZ}\int_{\bbR}(1+\xi^{2s}+j^{2s})|\cF (\Lambda_a
 f)(\xi,j)|^2\ d\xi\\  
 &\leq\sum_{j\in\bbZ}\left[\int_0^\infty+\int_{-\infty}^0
 \right](1+\xi^{2s}+j^{2s}) |\cF (\Lambda_a f)(\xi,j)|^2\ d\xi. 
\end{align}
We now focus on the integration over the positive $\xi$'s. From
Corollary \ref{corollary2}  (iv), for $\xi>0$, using the fact that
$s<\frac{\pi}{2|a|}$ we have
\begin{align*}
\xi^{s}\cF (\Lambda_a f)(\xi,j) 
& = \xi^{s} \Lambda_a( \cF(\chi_+f)) (\xi,j)  \\
& = \xi^{s}  e^{-\frac{\log \xi}{2}} \lim_{\eps\to0^+}
\int_{\eps<|\frac{\pi t}{2a}|} \frac{1}{\sinh \big( \frac{\pi
    t}{2a}\big)} \cF(\chi_+f) (e^{\log \xi -t},j)
e^{\frac{\log\xi-t}{2}}\, dt
\\
& =  e^{-\frac{\log \xi}{2}} \lim_{\eps\to0^+}
\int_{\eps<|\frac{\pi t}{2a}|} \frac{e^{st}}{\sinh \big( \frac{\pi
    t}{2a}\big)}  e^{s(\log\xi-t)} \cF(\chi_+f) (e^{\log \xi -t},j)
e^{\frac{\log\xi-t}{2}}\, dt
\\
& =  e^{-\frac{\log \xi}{2}} \lim_{\eps\to0^+}
\int_{\eps<|\frac{\pi t}{2a}|} \frac{e^{st}}{\sinh \big( \frac{\pi
    t}{2a}\big)}  e^{s(\log\xi-t)} \cF(\chi_+f) (e^{\log \xi -t},j)
e^{\frac{\log\xi-t}{2}}\, dt
\\
& = \cC^{-1}_2T_{a,s}\cC_2 \big((\cdot)^s \cF(\chi_+ f)\big)(\xi,j) \,,
\end{align*}
where $((\cdot)^s \cF(\chi_+ f))(\xi,j)=\xi^s \cF(\chi_+ f)(\xi,j)$. 
Therefore, from the $L^2$ boundedness of $T_{a,s}$ and $\Lambda_+$, we get
\begin{align*}
  \sum_{j\in\bbZ}\int_0^\infty \xi^{2s}|\cF (\Lambda_a f)(\xi,j)|^2\
  d\xi
&\leq C\sum_{j\in\bbZ}\int_\bbR |\cC_2\big((\cdot)^s \cF(\chi_+ f)\big)(\xi,j)|^2\ d\xi\\
 &\leq C\sum_{j\in\bbZ}\int_0^\infty \xi^{2s}|\cF(\chi_+ f)(\xi,j)|^2\ d\xi\\
  &\leq C\sum_{j\in\bbZ}\int_\bbR (1+\xi^2+j^2)^s\cF(\chi_+ f)(\xi,j)|^2\ d\xi\\
 &\leq C\|f\|^2_{W^{s,2}} \,.
\end{align*}

%
%
Similarly, from Corollary \ref{corollary2} and the $L^2$ boundedness of $\Lambda_a$ and $\Lambda_+$, we deduce
\begin{align*}
 \sum_{j\in\bbZ}(1+j^{2s})\int_0^\infty |\cF (\Lambda_a f)(\xi,j)|^2\ d\xi&=\sum_{j\in\bbZ}(1+j^{2s}) \int_0^\infty | \Lambda_a (\cF(\chi_+f))(\xi,j)|^2\ d\xi\\
 &\leq C\sum_{z\in\bbZ}(1+j^{2s})\int_\bbR |\cF(\chi_+ f)(\xi,j)|^2\ d\xi\\
 &\leq C\sum_{j\in\bbZ}\int_\bbR (1+\xi^2+j^2)^s\cF(\chi_+ f)(\xi,j)|^2\ d\xi\\
 &\leq C\|f\|^2_{W^{s,2}}.
\end{align*}

Hence, we finally estimated the integration over the positive $\xi$'s
in \eqref{Ma}. A completly analogous argument allows us to estimate the over the
negative $\xi$'s. Therefore, the proof is concluded. 
\qed
\proof[Proof of Theorem \ref{Sobolev-Pa}]
It follows from Corollary \ref{corollary}2 and Lemma \ref{lem1-sobolev}.\ms
\qed

In order to conclude the proof of Theorem \ref{SobolevL2}, in analogy with
Theorem \ref{Sobolev-Pa},  we prove the following.

\begin{thm}\label{Sobolev-Qa}
 The operator $Q_a$ defines a bounded operator 
$$
Q_a:
 W^{s,2}(\bbR\times\bbT)\to W^{s,2}(\bbR\times\bbT)
$$
 for every  $s\in\big[0,\frac{\pi}{2|a|}\big)$. 
\end{thm}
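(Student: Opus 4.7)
\textbf{Proof plan for Theorem \ref{Sobolev-Qa}.}
The strategy closely parallels the proof of Theorem \ref{Sobolev-Pa}; the new feature is that the multiplier $M_a(\xi,j)$ depends on $j$ through the shift $\xi\mapsto \xi-j/2-1/4$. I would first decompose
\[
M_a(\xi,j)=\tfrac{e^{a/4}}{2}\big(1+\tanh(a(\xi-j/2-1/4))\big),
\]
so that
\[
Q_a=\tfrac{e^{a/4}}{2}\Lambda_+ + \tfrac{e^{a/4}}{2}\Xi_a,
\]
where $\Xi_a=\chi_+\cC_2^{-1}T_{\widetilde M_a}\cC_2\chi_+$ is associated to the shifted multiplier $\widetilde M_a(\xi,j)=\tanh(a(\xi-j/2-1/4))$. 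Since $\Lambda_+$ is already $W^{s,2}$-bounded for $s<\pi/(2|a|)<1/2$ by Lemma \ref{lem1-sobolev}(i), the proof reduces to establishing the same bound for $\Xi_a$.

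Next I would record the shifted analogue of Lemma \ref{Fourier-transf-pv}: for $c\in\bbR$ and $\kappa\in[0,\pi/(2|a|))$, multiplying the integrand in the definition of $U_{a,\kappa}$ by $e^{ict}$ produces a tempered distribution $U_{a,\kappa,c}$ with
\[
\widehat{U_{a,\kappa,c}}(\xi)=2|a|i\tanh\big(a(\xi-c-i\kappa)\big),
\]
since modulation in physical space corresponds to translation in Fourier space.  All the estimates in Lemma \ref{Fourier-transf-pv} and Corollary \ref{corollary} transfer verbatim: the convolution operator $T_{a,\kappa,c}$ is $L^2(\bbR)$-bounded uniformly in $c\in\bbR$ (the $L^\infty$-norm of its multiplier is independent of $c$), and for $\kappa=0$ it is a Calder\'on--Zygmund operator whose truncations converge in $L^2$.

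With these ingredients in hand I would repeat the computation of Corollary \ref{corollary2}(iv), carrying the extra phase along. For $\xi>0$, the same argument (based on the substitution $x\mapsto xe^{-t}$, together with the formula $\cF(g((\cdot)e^{-t}))(\xi)=e^{t}\hat g(\xi e^t)$) gives
\[
\cF(\Xi_a f)(\xi,j)=\lim_{\eps\to 0^+ \atop R\to+\infty}\int_{\eps<|\pi t/(2a)|<R}\frac{e^{t/2+it(j/2+1/4)}}{\sinh(\pi t/(2a))}\,\cF(\chi_+f)(\xi e^t,j)\,dt,
\]
with the symmetric expression for $\xi<0$ (involving $\widetilde{\cF(\chi_+f)}$ as in Corollary \ref{corollary2}(iv)). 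Multiplying both sides by $\xi^s$ and rewriting $\xi^s=(\xi e^t)^s e^{-st}$ converts the kernel into $\dfrac{e^{t(1/2-s)+it(j/2+1/4)}}{\sinh(\pi t/(2a))}$, which is precisely the kernel of the operator $\cC_2^{-1}T_{a,s,j/2+1/4}\cC_2$ acting on $h(\cdot,j)=(\cdot)^s\cF(\chi_+f)(\cdot,j)$.

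To conclude I would run the estimate from the proof of Lemma \ref{lem1-sobolev}(ii). The bound $(1+\xi^2+j^2)^s\le 1+\xi^{2s}+j^{2s}$ (valid since $s<1/2$) splits $\|\Xi_a f\|_{W^{s,2}}^2$ into three pieces. The unweighted $L^2$-term is controlled by $\|\widetilde M_a\|_\infty\le 1$ together with Lemma \ref{lem1-sobolev}(i). The $\xi^{2s}$-term is controlled by the $L^2$-boundedness of $T_{a,s,j/2+1/4}$, uniform in $j$, combined again with Lemma \ref{lem1-sobolev}(i). The $j^{2s}$-term reduces to $L^2$-boundedness of $\Xi_a$ slice-by-slice in $j$, the weight $j^{2s}$ being independent of $\xi$ and hence commuting with the fiber operator. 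I expect the main technical obstacle to be the rigorous verification of the Fourier representation for $\Xi_a$ above---in particular the interchange of limits and the principal-value convergence in the presence of the oscillatory factor $e^{it(j/2+1/4)}$---but no new ideas beyond those employed in Corollary \ref{corollary2}(iv) should be required.
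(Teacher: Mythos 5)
Your proposal is correct and follows essentially the same route as the paper: split off the identity part of $M_a$, reduce the $\tanh$ part to a principal-value convolution conjugated by $\cC_2$, and run the weighted-$L^2$ estimate with the $A_2$ weight $\xi^{2s}$ fiberwise in $j$. The only (cosmetic) difference is that you absorb the $j$-dependent shift into a modulated kernel $e^{it(j/2+1/4)}/\sinh(\pi t/(2a))$ with bounds uniform in the shift parameter, whereas the paper conjugates that same shift onto the input function via the unimodular factor $x^{-i(\frac j2+\frac14)}$ (the functions $g_j$) and then reuses the unshifted operators $T_{a,0}$, $T_{a,s}$ and $\Lambda_a$ verbatim.
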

\begin{proof}
By density, we consider a test function $f$ on $\bbR\times\bbT$ which is a trigonometric polynomial in the second variable, that is,
$$
f(x,\theta)=\sum_{j=-N}^N f(x,j)e^{ij\theta}.
$$
Then, setting
$$
g_j(x,\theta):= x^{-i(\textstyle{\frac j2+\frac 14})}(\chi_+ f)(x,\theta).
$$
and using $(ii)$ of Corollary \ref{corollary}, we see that
\begin{align*}
Q_af (x,\theta)
&=\frac{e^{\frac a4}}{(2\pi)^2} \chi_+(x)  e^{-\frac{\log x}{2}} \sum_{j=-N}^N  e^{ij\theta}
\int_{\bbR}
\frac{e^{a(\xi-\frac j2 -\frac14)}}{2\cosh(a(\xi-\textstyle{\frac j2-\frac14})}
\cF(\cC_2(\chi_+ f))(\xi,j) e^{i(\log x)\xi}\, d\xi \\
&= \frac{e^{\frac a4}}{(2\pi)^2}\chi_+(x) e^{-\frac{\log x}{2}} \sum_{j=-N}^N
e^{ij\theta}\int_{\bbR} \frac{e^{a\xi}}{2\cosh(a\xi)}
\cF(\cC_2(\chi_+f)) (\xi+\textstyle{\frac j2+\frac14},j) 
e^{i(\log x)(\xi+\frac j2+\frac14)}\, d\xi \\
&=\frac{e^{\frac a4}}{(2\pi)^2}
\chi_+(x) e^{-\log x(\frac12-\frac i4)}\sum_{j=-N}^N
e^{ij(\theta+\frac{\log x}{2})}\int_\bbR \frac{e^{a\xi}}{2\cosh(a\xi)}
\cF(\cC_2 g_j)(\xi,j) e^{i(\log x)\xi}\, d\xi \\
&=\frac{e^{\frac a4}}{2\pi}
\chi_+(x) e^{-\log x(\frac12-\frac i4)}\sum_{j=-N}^N
e^{ij(\theta+\frac{\log x}{2})} T_{m_a} \big[
\cF_2(\cC_2 g_j)(\cdot,j)\big] (\log x) \\
& = \frac{e^{\frac a4}}{2} \Lambda_+f(x,\theta)
+ \frac{e^{\frac a4}}{8\pi|a|i}
\chi_+(x)e^{-\log x(\frac12-\frac
  i4)}\sum_{j=-N}^N e^{ij(\theta+\frac{\log x}{2})} T_{a,0}\big(\cF_2
\cC_2 g_j\big)(\log x,j) \\ 
 &=: C\Lambda_+ f(x,\theta)+ C'\widetilde{\Lambda_a}f
 (x,\theta) \,,
\end{align*}
where $C,C'$ are positive constants.

Therefore, to conclude the proof we only have to prove boundedness of
the operator $\widetilde{\Lambda_a}$. Arguing as in the proof of
Corollary \ref{corollary2} (iv), for $\xi>0$ we get  
\begin{align*}
\cF\widetilde{\Lambda_a} f(\xi,j)
&=\int_\bbR \chi_+(x) \lim_{\eps\to 0} \int_{|\frac{\pi t}{2a}|>\eps}
\frac{e^{it(\frac j2+\frac14)}e^{-\frac t2}}{\sinh(\frac{\pi
    t}{2a})}\cF_2 (\chi_+ f)(xe^{-t},j)\, dt\, e^{-ix\xi}\, dx\\
&= e^{-i(\frac j2+\frac14)\log\xi} \cC_2^{-1} T_{a,0} \cC_2 \big((\cdot)^{i(\frac j2+\frac14)}\cF(\chi_+ f)\big)
(\xi,j)\\
&= e^{-i(\frac j2+\frac14)\log\xi}\Lambda_a\big( (\cdot)^{i(\frac j2+\frac14)}\cF(\chi_+ f)\big)(\xi,j).
\end{align*}

Therefore, similarly to \eqref{Ma}, we obtain
\begin{align*}
 \sum_{j\in\bbZ}\int_0^\infty (1+\xi^2+j^2)^s 
|\cF \widetilde{\Lambda_a}f(\xi,j)|^2\ d\xi
&=\sum_{j\in\bbZ}\int_0^\infty(1+\xi^2+j^2)|^s\Lambda_a\big( (\cdot)^{i(\frac j2+\frac14)}\cF(\chi_+ f)\big)(\xi,j)|^2\, d\xi\\
&\leq \sum_{j\in\bbZ}\int_0^\infty(1+\xi^2+j^2)|^s\big( (\cdot)^{i(\frac j2+\frac14)}\cF(\chi_+ f)\big)(\xi,j)|^2\, d\xi\\
&=\sum_{j\in\bbZ}\int_0^\infty(1+\xi^2+j^2)|^s\cF(\chi_+ f)(\xi,j)|^2\, d\xi\\
&\leq \| f\|^2_{W^{s,2}(\bbR\times\bbT)}.
\end{align*}
With a similar argument we estimate the integration over negative
$\xi$'s and we are done.
\end{proof}
Finally, Theorems \ref{Sobolev-Pa} and \ref{Sobolev-Qa} imply the boundedness $\sP_{k,\ell}: W^{s,2}(d_b(D_\beta))\to W^{s,2}(d_b(D_\beta))$, hence, the boundedness of the Szeg\H{o} projection $\sP$. The proof of Theorem \ref{SobolevL2} is complete.

\section{Proof of Theorem \ref{Pll'}}\label{xxx}
In order to conclude the proof of Theorem \ref{Pll'}, we exploit the
space $\sH^2(D'_\beta)$ studied in \cite{M}. We recall here the main
facts needed for our purposes and we refer the reader to \cite{M} for
the proofs. 
\subsection{The Hardy space $\sH^2(D'_\beta)$}
Consider the domain
\begin{equation*}
D'_\beta=\left\{(z_1,z_2)\in\bbC^2: \big|\im\,z_1-\log|z_2|^2\big|<\pt,\big|\log|z_2|^2\big|<\beta-\pt\right\}
\end{equation*}
and its distinguished boundary 
\begin{equation*}
d_b(D'_\beta)=\left\{(z_1,z_2)\in\bbC^2:
  \big|\im\,z_1-\log|z_2|^2\big|=\pt,\big|\log|z_2|^2\big|=\beta-\pt\right\}. 
\end{equation*}
Then, the space $\sH^2(D'_\beta)$ is defined as the function space
$$
  \sH^2(D'_\beta)=\Big\{f\in\Hol(D'_\beta):
  \|f\|^2_{\sH^2(D'_\beta)}
=\sup_{(t,s)\in [0,\pt)\times[0,\beta-\pt)}\|f\|^2_{L^2(d_b(D'_\beta))}<\infty\Big\},
$$
where, 
$$
D'_{t,s}
=\left\{(z_1,z_2)\in\bbC^2:
  \big|\im\,z_1-\log|z_2|^2\big|< t, \big|\log|z_2|^2\big|<s\right\}
\,,
$$
and
\begin{align*}
&\|F\|^2_{L^2(d_b(D'_\beta))}=\\
  &\quad \int_{\bbR}\int_{0}^{2\pi}\left|F\Big(
    x+i(s+t),e^{\frac{s}{2}}e^{ i\theta} \Big)\right|^2 e^{\frac{s}{2}}d\theta
dx+\int_{\bbR}\int_{0}^{2\pi}\left|F\left(
    x+i(s-t),e^{\frac{s}{2}}e^{ i\theta} \right)\right|^2\ e^{\frac{s}{2}}d\theta
dx \notag \\ 
&\quad +\int_{\bbR}\int_{0}^{2\pi}\left|F\left(
    x-i(s+t),e^{-\frac{s}{2}}e^{ i\theta} \right)\right|^2 e^{-\frac{s}{2}}d\theta
dx+\int_{\bbR}\int_{0}^{2\pi}\left|F\left(
    x-i(s-t),e^{-\frac{s}{2}}e^{ i\theta} \right)\right|^2 e^{-\frac{s}{2}}d\theta
dx. \notag
\end{align*}

We notice that
\begin{equation*}
d_b(D'_\beta)=E'_1\cup E'_2\cup E'_3\cup E'_4 \,
\end{equation*}
where
\begin{align*}
&E'_1=\big\{(z_1,z_2): \im\, z_1=\beta,\log|z_2|^2=\beta-\pt\big\};\\
&E'_2\!=\!\left\{(z_1,z_2): \im\, z_1=\beta-\pi,\log|z_2|^2=\beta-\pt\right\};\\
&E'_3=\left\{(z_1,z_2)\: \im\, z_1=-\beta,\log|z_2|^2=-\beta+\pt\right\};\\
&E'_4\!=\!\left\{(z_1,z_2): \im\, z_1=-\beta+\pi,\log|z_2|^2=-\beta+\pt\right\}
\end{align*}
and each component $E'_\ell$, $\ell=1,\ldots,4$, can be identified with
$\bbR\times\bbT$. Therefore, the restriction of any function defined
on $d_b(D'_\beta)$ to any of the component $E'_\ell$ can be identified
with a function defined on $\bbR\times\bbT$. 

Denoting by $\chi'_\ell$ the characteristic function of
$E'_\ell$, $\ell=1,\ldots,4$, the Szeg\H o projection $\sP'f$ of
a function $f\in L^2(d_b(D'_\beta))$ is given by 
$$
\sP'f = \sum_{k,\ell=1}^4 \chi'_k\sP'(\chi'_\ell f) \,,
$$
where, writing $\chi'_k \sP'(\chi'_\ell \cdot) =\sP'_{k,\ell}$  we adopt
the convention that the operator $\sP'_{k,\ell}$ is acting on
functions defined on $\bbR\times\bbT$.
 
Now, by \cite[Remark 3.13]{M} it follows that 
\begin{equation}\label{T-tilde-m-k-elle}
\sP'_{k,\ell} = \cF^{-1} \big( \tilde m_{k,\ell} \cF \big) = T_{\widetilde m_{k,\ell}}\,,
\end{equation}
where 
$$
\tilde m_{k,\ell} = m_{k,\ell} (\textstyle{\frac12} -i\cdot,\cdot)
$$
and for $k,\ell\in\{1,\dots,4\}$,  $m_{k,\ell}$ are as in
Def. \ref{mult-expressions}.

Thus, the operator $\sP'$ is obtained as a sum of Fourier multiplier
operators -- and its mapping properties were
studied  in \cite{M}. 

\ms

\subsection{Relationship between $\sP'$ and $\sP$}
We now show that $\sH^2(D_\beta)$ and $\sH^2(D'_\beta)$
are isometric and, as a consequence, we obtain a transformation rule
for the Szeg\H{o} projections $\sP'$ and $\sP$.  

The domains $D'_\beta$ and $D_\beta$ are biholomorphic via the
biholomorphism 
\begin{align}\label{Biholomorphism}
\begin{split}
\vp:\ &D'_\beta\to D_\beta \qquad\qquad\qquad\qquad \vp^{-1}: D_\beta\to D'_\beta\\
&(z_1,z_2)\mapsto (e^{z_1},z_2)\qquad \qquad\quad\qquad (z_1,z_2)\mapsto (\Log(z_1 e^{-i\log|z_2|^2})+i\log|z_2|^2, z_2)
\end{split}
\end{align}
where $\Log$ denotes the principal branch of the complex logarithm. It is straightforward to see that
\begin{equation*}
 d_b(D_\beta)=\vp(d_b(D'_\beta))\cup \left\{(0,z_2): \big|\log|z_2|^2\big|=\beta-\pt\right\}.
\end{equation*}

The following proposition holds.
\begin{prop}\label{Isometry}
For $1<p<\infty$ be fixed.
 Let $\psi_p$ be
given by
\begin{equation*}
 \psi_p(z_1,z_2) := e^{-\frac ip\log|z_2|^2}(z_1
 e^{-i\log|z_2|^2})^{-\frac 1p} \,.
\end{equation*}
Let $\vp^{-1}$ be as in 
\eqref{Biholomorphism} and define the operator 
\begin{equation*}
\Lambda f:= \psi_p(f\circ\vp^{-1}) \,.
\end{equation*}
Then, $\psi_p \in \operatorname{Hol}(D'_\beta)$ and 
$$
\Lambda:
\sH^p(D'_\beta)\to \sH^p(D_\beta)
$$
is a surjective isometry.
\end{prop}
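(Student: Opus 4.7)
The proof hinges on showing (i) that $\Lambda f$ is holomorphic on $D_\beta$ for $f\in\sH^p(D'_\beta)$, (ii) that a change of variables on each ray of the distinguished boundary turns the norm equality into an elementary substitution, and (iii) that $\Lambda$ has an explicit holomorphic inverse.

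For (i), I would first verify that $\vp\colon D'_\beta\to D_\beta$ is a biholomorphism. Injectivity follows from the restriction $|\im z_1 - \log|z_2|^2|<\pt$, which confines $\im z_1$ to an interval of length $\pi$ for each fixed $z_2$; surjectivity and holomorphicity of $\vp^{-1}$ follow from the explicit formula $\vp^{-1}(w_1,w_2)=(\Log(w_1 e^{-i\log|w_2|^2})+i\log|w_2|^2, w_2)$, where a direct $\bar\partial_{w_2}$-computation shows that the two occurrences of $\log|w_2|^2$ produce cancelling antiholomorphic contributions, so $\vp^{-1}\in\Hol(D_\beta)$. The same $\bar\partial$-calculation applies verbatim to $\psi_p$ (which is naturally a function on $D_\beta$, the domain on which $\Lambda f$ lives); one may in fact rewrite $\psi_p(z_1,z_2)=z_1^{-1/p}:=\exp(-\tfrac{1}{p}\log z_1)$ using the same branch of $\log z_1$ for which $\vp^{-1}(z_1,z_2)=(\log z_1, z_2)$, so in particular $|\psi_p(z_1,z_2)|^p=|z_1|^{-1}$. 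Composition then gives $\Lambda f\in\Hol(D_\beta)$.

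For (ii), I would observe that under $(z_1,z_2)\mapsto(e^{z_1},z_2)$ the defining inequalities of $D'_{t,s}$ transform precisely into those of $D_{t,s}$, so $\vp$ restricts to a biholomorphism $D'_{t,s}\to D_{t,s}$ and sends $d_b(D'_{t,s})$ bijectively onto $d_b(D_{t,s})$. Parametrizing a ray of $d_b(D_{t,s})$ as $(re^{i\alpha}, e^{\pm s/2}e^{i\theta})$ with $\alpha\in\{\pm s\pm t\}$ and $r>0$, its preimage under $\vp$ is $(x+i\alpha, e^{\pm s/2}e^{i\theta})$ with $x=\log r$. Combining $|\psi_p|^p=r^{-1}$ with $dx=dr/r$ yields
\[
\int_0^\infty|\Lambda f(re^{i\alpha}, e^{\pm s/2}e^{i\theta})|^p\,dr = \int_\bbR|f(x+i\alpha, e^{\pm s/2}e^{i\theta})|^p\,dx,
\]
which matches the corresponding summand of $\|f\|^p_{L^p(d_b(D'_{t,s}))}$ in \eqref{GrowthD}. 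Summing over the four rays (weighted by $e^{\pm s/2}$) and taking suprema over $(t,s)\in[0,\pt)\times[0,\beta-\pt)$ gives $\|\Lambda f\|_{\sH^p(D_\beta)}=\|f\|_{\sH^p(D'_\beta)}$.

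For (iii), since $\psi_p(\vp(z_1,z_2))=e^{-z_1/p}$ (the two $e^{\pm i\log|z_2|^2/p}$ factors cancel), I would set $\Lambda' g(z_1,z_2):=e^{z_1/p}g(e^{z_1},z_2)$, which lies in $\Hol(D'_\beta)$, and show by the analogous change of variables (using $|e^{z_1/p}|^p=e^{\Re z_1}=r$) that $\Lambda'\colon\sH^p(D_\beta)\to\sH^p(D'_\beta)$ is an isometry. A direct computation then gives $\Lambda\Lambda'=\mathrm{id}_{\sH^p(D_\beta)}$ and $\Lambda'\Lambda=\mathrm{id}_{\sH^p(D'_\beta)}$, so $\Lambda$ is surjective. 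The main subtlety throughout is the consistent bookkeeping of the branches of $\log z_1$ and $(\cdot)^{-1/p}$ so that $\psi_p$ and $\vp^{-1}$ are simultaneously holomorphic despite their apparent non-holomorphic dependence on $|z_2|$, and so that the cancellation $\psi_p\cdot\psi_p^{-1}=1$ in the inverse computation is unambiguous; once this bookkeeping is settled, everything else reduces to a one-dimensional substitution.
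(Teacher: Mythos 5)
Your proposal is correct and follows essentially the same route as the paper's proof, which simply asserts the holomorphy of $\psi_p$ and of the composition and invokes the same change of variables $r=e^x$ on each ray of $d_b(D_{t,s})$; your version merely writes out the computations (including the $\bar\partial$-cancellation that makes $\psi_p$ and $\vp^{-1}$ holomorphic, correctly reading $\psi_p$ as a function on $D_\beta$). The one thing you add beyond the paper is the explicit inverse $\Lambda' g(z_1,z_2)=e^{z_1/p}g(e^{z_1},z_2)$, which settles the surjectivity that the paper's proof leaves implicit.
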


\begin{proof}
 It is easy to see that $\psi_p\in \operatorname{Hol}(D'_\beta)$ (or
see \cite[Lemma 1.2]{KPS}). 
The holomorphicity of $\Lambda f$ on $D_\beta$ follows immediately
from the holomorphicity of $\vp^{-1}$ and $\psi_p$, whereas the
equality $\|f\|_{\sH^p(D'_\beta)}=\|\Lambda f\|_{\sH^p(D_\beta)}$
follows at once from the fact that $\|f\|_{L^p(d_b(D'_{t,s}))}=\|\Lambda f\|_{L^p(d_b(D_{t,s}))}$ for every
$(t,s)\in (0,\pt)\times[0,\beta-\pt)$, as can be seen by
a simple change of variables.
\end{proof}

We now notice that this proposition implies part {\it (ii)} in
Proposition \ref{1st-prop-Hp}. 
For, let $F\in H^p(D_\beta)$ and set $f=\Lambda^{-1} F \in H^p(D'_\beta)$.
By \cite[Thm. 4.3]{M} we know that the analogous conclusion holds true
for $f$.  Using part{\it (i)}, {\it (ii)} in Proposition
\ref{1st-prop-Hp} now follows easily. \ms

Using the proposition, we obtain a transformation rule for the
Szeg\H{o} kernels and projections of the spaces $\sH^2(D_\beta)$ and
$\sH^2(D'_\beta)$ similar to the one proved by Bell in \cite[Thm.
12.3]{MR1228442} for smooth bounded domains in $\bbC$.  We leave the
elementary details to the reader.

\begin{prop}\label{transf-rule}
Let $K$ and $K'$ be the reproducing kernels of $\sH^2 (D_\beta)$ and
$\sH^2 (D'_\beta)$ respectively. Then, 
\begin{equation*} 
  K\big((z_1,z_2),(w_1,w_2)\big)
=\psi_2(z_1,z_2)\, K'\big(\vp^{-1}(z_1,z_2),\vp^{-1}(w_1,w_2)\big)\,
  \overline{\psi_2(w_1,w_2)}.
\end{equation*}
Hence, if $\sP'$ and $\sP$ are the Szeg\H{o} projections of
$\sH^2(D'_\beta)$ and $\sH^2(D_\beta)$ respectively and $f\in L^2(d_b(D_\beta))$, then 
\begin{equation*}
  \sP'(\Lambda ^{-1}f)=\Lambda ^{-1}(\sP f).
\end{equation*}
\end{prop}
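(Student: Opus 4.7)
The plan is to leverage Proposition \ref{Isometry} by first extending the isometry $\Lambda$ to a unitary on the boundary $L^2$-spaces, and then deducing both statements from standard arguments: the projection identity from the intertwining property of a unitary that maps one closed subspace onto another, and the kernel identity from the reproducing property.

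Concretely, the defining formula $\Lambda f(z)=\psi_2(z)\,f(\vp^{-1}(z))$ makes sense for $z$ on $d_b(D_\beta)$ away from the $d\sigma$-null sets $E_0^\pm$, where $\vp^{-1}$ identifies the two distinguished boundaries. By the very same change-of-variables computation used in the proof of Proposition \ref{Isometry}, $\Lambda$ extends to a surjective isometry $\Lambda:L^2(d_b(D'_\beta))\to L^2(d_b(D_\beta))$ which carries $\sH^2(D'_\beta)$ onto $\sH^2(D_\beta)$. Being unitary and mapping one closed subspace bijectively onto the other, $\Lambda$ automatically intertwines the orthogonal projections, $\sP\Lambda=\Lambda\sP'$, and applying this identity to $\Lambda^{-1}f$ immediately yields $\sP'(\Lambda^{-1}f)=\Lambda^{-1}(\sP f)$.

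For the kernel identity I would use the reproducing property twice. Fixing $w\in D_\beta$ and $g\in\sH^2(D'_\beta)$, on the one hand
$$
\Lambda g(w)=\psi_2(w)\,g(\vp^{-1}(w))=\psi_2(w)\,\la g,\,K'(\cdot,\vp^{-1}(w))\ra_{L^2(d_b(D'_\beta))},
$$
while on the other hand, by unitarity of $\Lambda$,
$$
\Lambda g(w)=\la \Lambda g,\,K(\cdot,w)\ra_{L^2(d_b(D_\beta))}=\la g,\,\Lambda^{-1}K(\cdot,w)\ra_{L^2(d_b(D'_\beta))}.
$$
Since this holds for every $g\in\sH^2(D'_\beta)$, one deduces the equality $\Lambda^{-1}K(\cdot,w)=\overline{\psi_2(w)}\,K'(\cdot,\vp^{-1}(w))$ in $\sH^2(D'_\beta)$; unfolding $\Lambda^{-1}$ explicitly and setting $z=\vp(\zeta')$ then produces the claimed formula.

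No genuine obstacle arises: the only points requiring care are the bookkeeping of complex conjugates when pulling $\Lambda$ across the hermitian inner product, and the verification that $\Lambda$ truly defines an isometry on the full $L^2$-spaces (not merely on their holomorphic subspaces), which is in turn a direct consequence of the Jacobian computation already implicit in the proof of Proposition \ref{Isometry}. This explains why the authors feel comfortable leaving the elementary details to the reader.
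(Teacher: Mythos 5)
Your proposal is correct and is precisely the standard Bell-type argument that the paper itself invokes (it cites Bell's Theorem 12.3 and explicitly ``leaves the elementary details to the reader''): extend $\Lambda$ to a unitary between the boundary $L^2$-spaces via the change-of-variables computation from Proposition \ref{Isometry}, deduce the intertwining of the orthogonal projections, and obtain the kernel identity from the reproducing property together with $\Lambda^*=\Lambda^{-1}$. You have simply written out the details the authors omitted, in the same spirit they intended.
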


Finally, we have,

\proof[End of proof of Theorem \ref{Pll'}]
From Proposition
\ref{transf-rule}  we have that
\begin{equation}\label{P-P'}
\sP f = \sum_{k,\ell=1}^4 \sP_{k,\ell}f 
=\sum_{k ,\ell=1}^4 \chi_k \sP(\chi_\ell f)
=\sum_{k,\ell=1}^4 \chi_k\psi_2 \cdot \sP' 
\big( ({\textstyle \frac{1}{\psi_2}} \chi_\ell f)\circ
\vp \big) \circ \vp^{-1}\,.
\end{equation}

Observe that
$$
\sP' \big( ({\textstyle \frac{1}{\psi_2}} \chi_\ell f)\circ
\vp \big) =
\sum_{k,m=1}^4 \chi'_k \sP' \big( \chi'_m \cdot ({\textstyle \frac{1}{\psi_2}} \chi_\ell f)\circ
\vp \big) =  \sum_{k=1}^4 \chi'_k \sP' \big( \chi'_\ell\cdot ({\textstyle \frac{1}{\psi_2}} \chi_\ell f)\circ
\vp \big)\,,
$$
since $\chi_\ell\circ\vp =\chi'_\ell$, so it follows that 
$\chi'_m \cdot ({\textstyle \frac{1}{\psi_2}} \chi_\ell f)\circ
\vp \big)$ does not vanish identically  only if $m=\ell$.
Therefore, using \eqref{T-tilde-m-k-elle}
$$
\sP' \big( ({\textstyle \frac{1}{\psi_2}} \chi_\ell f)\circ
\vp \big) 
=  \sum_{k=1}^4  \sP'_{k,\ell} \big(  ({\textstyle \frac{1}{\psi_2}} \chi_\ell f)\circ
\vp \big)  
= \sum_{k=1}^4 T_{\widetilde m_{k,\ell}}  \big(  ({\textstyle \frac{1}{\psi_2}} \chi_\ell f)\circ
\vp \big) 
\,
$$
where $T_{\widetilde m_{k,\ell}}$ denotes the Fourier multiplier
operator associated to $\widetilde m_{k,\ell}$ as in Section
\ref{M-F}. 

It is now an easy observation that 
$$
 ({\textstyle \frac{1}{\psi_2}} \chi_\ell f)\circ
\vp=\cC_2 (\chi_\ell f),
$$
hence, from \eqref{P-P'}, the results in Section \ref{M-F} and the
first part of Theorem \ref{Pll'}, we conclude that
\begin{align*}
 \sP f&=\sum_{k,\ell=1}^4 \cC_2^{-1} T_{\widetilde m_{k,\ell}} \cC_2 f
=\sum_{k,\ell=1}^4 \sT_{m_{k,\ell}}f
\end{align*}
as we wished.  This concludes the proof of the theorem.
\ms
\qed

\section*{Final Remarks}
As we mentioned, our final goal is to prove the (ir-)regularity of the
Sze\H o projection on the smooth, bounded, worm domain $\Omega_\beta$. 
In this case, 
there is no ambiguity in the definition of the (classical) space
$H^2(\Omega_\beta)$ and the associated Szeg\H{o} projection on the
topological boundary. 
On the other hand, due to the nature of the domain $D_\beta$, and of
its biholomorphic copy $D'_\beta$, it was a natural choice to define
and study the Szeg\H{o} projection on the distinguished boundary. The
drawback of this choice is that it is more complicated to transfer
information from the Hardy space $\sH^2(d_b(D_\beta))$ to
$H^2(b\Omega_\beta)$. We do 
not exclude that to fully understand the behavior of the Szeg\H{o} 
projection $P_{\Omega_\beta}$ on the topological boundary
$b\Omega_\beta$ it might be necessary to study the Hardy spaces on the
topological boundary of $D_\beta$. The Hardy spaces on the topological
boundary of $D'_\beta$ have already been studied by  the same authors
in \cite{MP}. 

Finally, we remark that regularity of the Szeg\H{o} projection, at
least in a certain setting, is equivalent to the regularity of the
Complex Green operator \cite{HPR}. Therefore, 
the (ir-)regularity of the  Szeg\H{o} projection $P_{\Omega_\beta}$ will also
provided information about the (ir-)regularity of the Complex Green
operator on $b\Omega_\beta$.   We plan to come back to these questions
in future works.

\bigskip

\bibliographystyle{amsalpha}
\bibliography{bibWormPlane-1}

\end{document}